\title[Unbounded closed self-adjoint operators in Hilbert spaces]{Model theory of a Hilbert space expanded with an unbounded closed selfadjoint operator}
\author{Camilo Argoty}
\address{Camilo Argoty
\\ Universidad Sergio Arboleda
\\ Departamento de Matem\'aticas
\\ Calle 74 \# 14-14, Bogot\'a, Colombia }
\date{}
\thanks{The author is very thankful to Alexander Berenstein, Andr\'es Villaveces and Pedro Zambrano for his help in reading and correcting this work.}
\def\newrefformat#1#2{%
  \@namedef{pr@#1}##1{#2}}
\def\prettyref#1{\@prettyref#1:}
\def\@prettyref#1:#2:{%
  \expandafter\ifx\csname pr@#1\endcsname\relax%
    \PackageWarning{prettyref}{Reference format #1\space undefined}%
    \ref{#1:#2}%
  \else%
    \csname pr@#1\endcsname{#1:#2}%
  \fi%
}
\def\indsym#1#2{%
  \setbox0=\hbox{$\m@th#1x$}%
  \kern\wd0%
  \hbox to 0pt{\hss$\m@th#1\mid$\hbox to 0pt{$\m@th#1^{#2}$}\hss}%
  \lower.9\ht0\hbox to 0pt{\hss$\m@th#1\smile$\hss}%
  \kern\wd0} \newcommand{\ind}[1][]{\mathop{\mathpalette\indsym{#1}}}
\def\nindsym#1#2{%
  \setbox0=\hbox{$\m@th#1x$}%
  \kern\wd0%
  \hbox to 0pt{\mathchardef\nn="3236\hss$\m@th#1\nn$\kern1.4\wd0\hss}
  \hbox to 0pt{\hss$\m@th#1\mid$\hbox to 0pt{$\m@th#1^{#2}$}\hss}%
  \lower.9\ht0\hbox to 0pt{\hss$\m@th#1\smile$\hss}%
  \kern\wd0}
\newcommand{\nind}[1][]{\mathop{\mathpalette\nindsym{#1}}}
 \def\bv{\bar v}  
 \def\bw{\bar w}
\def\s{\sigma}\def\p{\pi}\def\a{\alpha}
\def\b{\beta}
\def\ben{\begin{enumerate}}\def\een{\end{enumerate}}
\def\bdc{\begin{description}}\def\edc{\end{description}}
\def\bitm{\begin{itemize}}\def\eitm{\end{itemize}}
\def\bdf{\begin{defin}}\def\edf{\end{defin}}
\def\bth{\begin{theo}}\def\eth{\end{theo}}
\def\bfc{\begin{fact}}\def\efc{\end{fact}}
\def\bco{\begin{coro}}\def\eco{\end{coro}}
\def\brm{\begin{rem}}\def\erm{\end{rem}}
\def\blm{\begin{lemma}}\def\elm{\end{lemma}}
\def\bnt{\begin{nota}}\def\ent{\end{nota}}
\def\bex{\begin{exe}}\def\eex{\end{exe}}
\def\bpf{\begin{proof}}\def\epf{\end{proof}}
\def\bas{\begin{assum}}\def\eas{\end{assum}}
\def\beq{\begin{equation}}\def\eeq{\end{equation}}
\def\bcl{\begin{cla}}\def\ecl{\end{cla}}
  \def\m{\mu}
  \def\K{\mathcal{K}}
 \def\r{\rho}\def\G{\Gamma}
 \def\k{\kappa} \def\s{\sigma} \def\a{\alpha}
\def\b{\beta} \def\L{L}  \def\e{\epsilon} \def\l{\lambda}  \def\L{\Lambda}
\def\H{\mathcal{H}}
\def\O+{\oplus}
\newtheorem{theo}{Theorem}[section]
\newtheorem{coro}[theo]{Corollary}
\newtheorem{lemma}[theo]{Lemma}
\theoremstyle{definition}
\newtheorem{defin}[theo]{Definition}
\newtheorem{fact}[theo]{Fact}
\theoremstyle{remark}
\newtheorem{exe}[theo]{Example}
\newtheorem{rem}[theo]{Remark}
\newtheorem{assum}[theo]{Assumption}
\newtheorem{nota}[theo]{Notation}
\newtheorem*{cla}{Claim}
\newcommand{\N}{\ensuremath{\mathbb{N}}}
\newcommand{\Z}{\ensuremath{\mathbb{Z}}}
\newcommand{\Q}{\ensuremath{\mathbb{Q}}}
\newcommand{\R}{\ensuremath{\mathbb{R}}}
\newcommand{\C}{\ensuremath{\mathbb{C}}}
\newcommand{\LL}{\ensuremath{\mathcal{L}}}
\newcommand{\MM}{\ensuremath{\mathcal{M}}}
\newcommand{\NN}{\ensuremath{\mathcal{N}}}
\begin{document}

\maketitle

\begin{abstract}
We study a closed unbounded self-adoint operator $Q$ acting on a Hilbert space $H$ in the framework of \textit{Metric Abstract Elementary Classes} (MAECS). We build a suitable MAEC for $(H,\G_Q)$, prove it is $\aleph_0$-stable up to perturbations and characterize non-splitting and show it has the same properties as non-forking in superstable first order theories. Also, we characterize equality, orthogonality and domination of (Galois) types in that MAEC.
\end{abstract}

\section{introduction}\label{Introduction}

This paper deals with a complex Hilbert space expanded by a unbounded closed selfadjoint operator $Q$, from the point of view of \textit{Metric Abstract Elementary Classes} (see \cite{HiHir1}). 

Previous works to this paper, can be classified in two kinds. The first one, about model theory of Hilbert spaces expanded with some operators in the frame of continuous logic. The second, about development of a notion of \textit{Abstract Elementary Class} similar to Shelah's (see \cite{She88}), but suitable for analytic structures along with its further analysis. 

For the first kind, previous work go back to Jos\'e Iovino PhD Thesis (see \cite{Io}), where he and C. W. Henson (his advisor) noticed that the structure $(H,0,+,\langle\ |\ \rangle, U)$, where $U$ is a unitary operator, is stable. In \cite{BeBue}, Alexander Berenstein and Steven Buechler gave a geometric characterization of forking in that structure after adding to it the projections determined by the Spectral Decomposition Theorem. Ben Yaacov, Usvyatsov and Zadka (see \cite{BUZ}) worked on the first order continuous logic theory of a Hilbert space with a generic automorphism, and chracterized the generic automorphisms on a Hilbert space as those whose spectrum is the unit circle. Argoty and Berenstein (see \cite{ArBer}) studied the theory of the structure $(H,+,0,\langle|\rangle,U)$ where $U$ is a unitary operator in the case when the spectrum is countable. The author and Ben Yaacov (see \cite{ArBen}), studied the case of a Hilbert space expanded by a normal operator $N$.  Finally  in a recently submitted paper, the author has dealt with non-degenerate representations of an unital (non-commutative) $C^*$-algebra (see \cite{Ar}). 

For the second kind, in $1980'$s S. Shelah defined in \cite{She88} the so called \textit{Abstract Elementary Class} (AEC) as a generalization of the elementary class which is a class of models of a first order theory. As ever, this paper from Shelah generated a big trend in model theory towards the study of this classes. In order to deal with the case of analytic structures, Tapani Hyttinen and $\mathring{\text{A}}$sa Hirvonen defined  \textit{metric abstract elementary classes} in \cite{HiHir1} as a generalization of Shelah's AEC's to classes of metric structures (MAEC's).  After this, in \cite{ViZa1,ViZa2} Villaveces and Zambrano studied notions of independence and superstability for \textit{metric abstract elementary clases} (MAEC's). 

The main results in this paper are the following:
\bitm
	\item We build a MAEC associated with the structure $(H,\G_Q)$ which is denoted by $\K_{(H,\G_Q)}$.
	\item We characterize (Galois) types of vectors in some structure in $\K_{(H,\G_Q)}$, in terms of spectral measures. 
	\item We show that $\K_{(H,\G_Q)}$ is $\aleph_0$-stable up to perturbations.
	\item We characterize non-splitting in $\K_{(H,\G_Q)}$ and we show that it has the same properties as non-forking for superstable first order theories.
\eitm
This paper is divided as follows: In the section \ref{preliminaries}, we give an introduction to Spectral Theory of unbounded closed selfadoint operators. In section \ref{(H-Q)MAEC} In this section we define a \textit{metric abstract elementary class} associated with $(H,\G_Q)$ (denoted by $\K_{(H,\G_Q)}$). In section \ref{definablealgebraicclosures}, we give a characterization of definable and algebraic closures. In section \ref{stability}, we prove superstability of the MAEC $\K_{(H,\G_Q)}$. In section \ref{forking}, we define spectral independence in $\K_{(H,\G_Q)}$ and we show that it is equivalent to non-splitting with the same properties as non-forking for superstable first order theories. Finally in section \ref{orthogonalitydomination}, we characterize domination, orthogonality of types in terms of absolute continuity and mutual singularity between spectral measures.

\section{preliminaries: spectral theory of a closed unbounded self-adjoint operator}\label{preliminaries}
This is a small review of spectral theory of a closed unbounded self-adjoint operator. The main sources for this section are \cite{DuSchw,ReedSi}.

\bdf
Let $H$ be a complex Hilbert space. A \textit{linear operator on} $H$ is a function $Q:D(Q)\to H$ such that $D(Q)$ is a dense vector subspace of $H$ and for all $v$, $w\in D$ and $\a$, $\b\in\C$, $Q(\a v+\b w)=\a Qv+\b Qw$.
\edf

\begin{defin}
Let $Q$ be a linear operator on $H$. The operator $Q$ is called \emph{bounded} if the set $\{\|Qu\| : u\in D(Q), \|u\|=1\}$ is bounded in $\mathbb{C}$. If $Q$ is not bounded, it is called \textit{unbounded}.
\end{defin}

\begin{defin}
If $Q$ is bounded we define the \emph{norm} of $S$ by:
\[\|Q\|=\sup_{u\in D(S),\|u\|=1}\|Su\|\]
For $H$ a Hilbert space, we denote by $B(H)$ the algebra of all bounded linear operators on $H$. 
\end{defin}

\bdf
Let $R$ and $S$ be linear operators on $H$ and let $\a\in\C$. Then the linear operators $R+S$, $\a S$ and $S^{-1}$ are defined as follows:
	\ben
		\item If $D(R)\cap D(S)$ is dense in $H$, $D(R+S):=D(R)\cap D(S)$ and $(R+S)v:=Rv+Sv$ for $v\in D(R+S)$.
		\item $D(RS):=\{v\in H\ |\ v\in D(S)\text{ and }Sv\in D(R)\}$, $(RS)v:=R(Sv)$ if $D(RS)$ is dense and $v\in D(RS)$.
		\item If $\a=0$, then $\a T\equiv 0$ in $H$. If $\a\neq 0$, $D(\a S):=D(S)$ and $(\a S)v:=\a Sv$ if $v\in D(S)$
		\item If $S$ is one-to-one and $SD(S)$ is dense in $H$, $D(S^{-1}):=SD(S)$ and $S^{-1}v:=w$ if $w\in D(S)$ and $Sw=v$
	\een
\edf

\begin{defin}
Let $Q:D(Q)\to H$ be a linear operator on $H$. The operator $Q$ is called \textit{closed} if the set $\{(v,Qv)\ |\ v\in D(Q)\}$ is closed in $H\times H$. The operator $Q$ is called \textit{closable} if the closure of the set $\{(v,Qv)\ |\ v\in D(Q)\}$ is the graph of some operator which is called the \textit{closure} of $Q$ and is denoted by $\bar{Q}$.
\end{defin}

\begin{defin}
Let $Q$ be an operator (either bounded or unbounded), and $\lambda$ a complex number 
\ben
\item $\l$ is called a \textit{eigenvalue} of $Q$ if the operator $Q-\lambda I$ is not one to one. The \textit{point spectrum} of $Q$, denoted by $\s_p(Q)$, is the set of all the eigenvalues of $Q$.
\item $\lambda$ is called a \textit{continuous spectral value} if the operator $Q-\lambda I$ is one to one, the operator $(Q-\lambda I)^{-1}$ is densely defined but is unbounded. The \textit{continuous spectrum} of $Q$ ($\s_c(Q)$) is the set of all the continuous spectral values of $Q$.
\item $\lambda$ is called a \textit{residual spectral value} if $(Q-\lambda I)H$ is not dense in $H$. The \textit{residual spectrum} of $Q$ ($\s_r(Q)$) is the set of all the residual spectral values of $Q$.
\item The \textit{spectrum} of $Q$ ($\s(Q)$) is the union of $\s_p(Q)$, $\s_c(Q)$ and $\s_r(Q)$.
\item The \textit{resolvent set} of $Q$ ($\r(Q)$) is the set $\C\setminus \s(Q)$. If $\l\in \r(Q)$.
\item The \textit{resolvent of} $Q$ at $\l$ is the operator $(Q-\l I)^{-1}$, and is denoted by $R_\l(Q)$.
\een
\end{defin}

\begin{defin}
Given linear operators $Q:D(Q)\to H$ and $Q^\prime:D(Q^\prime)\to H$ on $H$, $Q^\prime$ is said to be an \textit{adjoint operator} of $Q$ if for every $v\in D(Q)$ $w\in D(Q^\prime)$, $\langle Qv|w\rangle=\langle v|Q^*w\rangle$.
\end{defin}

\begin{defin}
Given a linear operator $Q:D(Q)\to H$ and $Q^\prime:D(Q^\prime)\to H$ on $H$, then $Q^\prime$ is said to be the \textit{adjoint operator} of $Q$, denoted $Q^*$, if $Q^\prime$ is maximal adjoint to $Q$ i.e. if $Q^{\prime\prime}$ is and adjoint operator of $Q$ and $Q^\prime\subseteq Q^{\prime\prime}$ then $Q^\prime = Q^{\prime\prime}$.
\end{defin}

\bdf
An operator $Q$ on $H$ is called \textit{symmetric} if $Q\subseteq Q^*$. If $Q=Q^*$, $Q$ is called \textit{selfadjoint}.
\edf

\bth[Lemma XII.2.2 in \cite{DuSchw}]
The spectrum of a self adjoint operator $Q$ is real and for $\l\in\rho(Q)$, the resolvent $R_l(Q)$ is a normal operator with $R_\l(Q)^*=R_{\hat{\l}}(Q)$ and $\|R_\l(Q)\|\leq |Im(\l)|$.
\eth



\bth[Spectral Theorem Multiplication Form, Theorem VIII.4 in \cite{ReedSi}]\label{Theorem&Spectral&Theorem&Multiplication&Form}
Let $Q$ be self adjoint on a Hilbert space $H$ with domain $D(Q)$. Then there are a measure space $(X,\m)$, with $\m$ finite, an unitary operator $U:H\to L^2(X,\m)$, and a real function $f$ on $X$ which is finite a.e. so that,
	\ben
		\item $v\in D(Q)$ if and only if $f(\cdot)(Uv)(\cdot)\in L^2(X,\m)$.
		\item If $g\in U(D(Q))$, then $(UQU^{-1}g)(x)=f(x)g(x)$ for $x\in X$.
	\een 
\eth

\begin{defin}
A self-adjoint operator $Q$ different from the zero operator is called \textit{positive} and we write $Q\ge 0$, if $\langle Qv|v\rangle\ge 0$ for all $v\in\H$.
\end{defin}

\bth[Spectral Theorem-Functional Calculus Form, Theorem VIII.5 in \cite{ReedSi}]\label{Theorem&Spectral&Theorem&Functional&Calculus}
Let $Q$ be a closed unbounded self-adjoint operator on $H$. Then there is a unique map $\p$ from the bounded Borel functions on $\R$ into $B(H)$ such that,
	\ben
		\item $\p$ is an algebraic $^*$-homomorphism.
		\item $\p$ is norm continuous, that is, $\|\p(h)\|_{B(H)}\leq \|h\|_\infty$.
		\item Let $(h_n)_{n\in \N}$ be a sequence of bounded Borel functions with $h_n(x)\to x$ for each $x$ and $|h_n(x)|\leq |x|$ for all $x$ and $n$. Then for any $v\in D(Q)$, $\lim_{n\to\infty}\p(h_n)v=Qv$.
		\item Let $(h_n)_{n\in \N}$ be a sequence of bounded Borel functions. If $h_n\to h$ pointwise and if the sequence $\|h_n\|_\infty$ is bounded, then $\p(h_n)\to\p(h)$ strongly.
		\item If $v\in H$ is such that $Qv=\l v$, then $\p(h)v=h(\l)v$.
		\item If $h\geq 0$, then $\p(h)\geq 0$
	\een	
\eth

\bdf
Let $\Omega$ be a borel measurable subset of $\R$. By $E_\Omega$ we denote the bounded operator $\p(\chi_\Omega)$ according to Theorem \ref{Theorem&Spectral&Theorem&Functional&Calculus}.
\edf

\bfc[Remark after Theorem VIII.5 in \cite{ReedSi}]\label{Fact&Resolution&Of&Identity}
Previously defined projections satisfy the following properties:
	\ben
		\item For every borel measurable $\Omega\subset \R$, $E_\Omega^2=E_\Omega$ and $E_\Omega^*=E_\Omega$.
		\item $E_\emptyset=0$ and $E_{(-\infty,\infty)}=I$
		\item If $\Omega=\cup_{n=1}^\infty\Omega_n$ with $\Omega_n\cap\Omega_m=\emptyset$ if $n\neq m$, then $\sum_{n=1}^\infty E_{\Omega_n}$ converges to $E_\Omega$ in the strong topology.
		\item $E_{\Omega_1}E_{\Omega_2}=E_{\Omega_1\cap\Omega_2}$ (and therefore $E_{\Omega_1}$ commutes with $E_{\Omega_2}$) for all borel measurable $\Omega_1$, $\Omega_2\subseteq\R$.
	\een
\efc

\begin{defin}
The family $\{E_\Omega\ |\ \Omega\subseteq\R\text{ is borel measurable }\}$ described in Fact \ref{Fact&Resolution&Of&Identity} is called the \textit{spectral projection valued measure} (s.p.v.m.) generated by $Q$.
\end{defin}


\begin{fact}[Remark before Theorem VIII.6 in \cite{ReedSi}]\label{esquareisaregularmeasure}
Let $v\in\H$. Then the set function such that for every Borel set $\Omega\subset\R$ assigns the value $\langle E_\Omega v| v\rangle$ is a Borel measure. In the case when $\Omega=(-\infty,\l)$, this measure is denoted $\langle E_\l v| v\rangle$.
\end{fact}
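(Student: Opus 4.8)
The plan is to verify directly that the set function $\mu_v(\Omega):=\langle E_\Omega v\,|\,v\rangle$ satisfies the three defining conditions of a finite positive Borel measure on the Borel $\sigma$-algebra of $\R$: it takes nonnegative values, it vanishes on $\emptyset$, and it is countably additive. All of this should follow formally from the properties of the s.p.v.m.\ collected in Fact \ref{Fact&Resolution&Of&Identity}, together with the continuity of the inner product; no new analytic input beyond the Spectral Theorem material already available is needed.

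For nonnegativity I would use that each $E_\Omega$ is an orthogonal projection. By property (1) of Fact \ref{Fact&Resolution&Of&Identity} we have $E_\Omega=E_\Omega^2$ and $E_\Omega^*=E_\Omega$, so
\[
\langle E_\Omega v\,|\,v\rangle=\langle E_\Omega E_\Omega v\,|\,v\rangle=\langle E_\Omega v\,|\,E_\Omega^* v\rangle=\langle E_\Omega v\,|\,E_\Omega v\rangle=\|E_\Omega v\|^2\ge 0 .
\]
That $\mu_v(\emptyset)=0$ is immediate from $E_\emptyset=0$ (property (2)), and finiteness follows from the same property, since $\mu_v(\R)=\langle E_{(-\infty,\infty)}v\,|\,v\rangle=\langle v\,|\,v\rangle=\|v\|^2<\infty$.

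The substantive step is countable additivity. Let $\Omega=\bigcup_{n=1}^\infty\Omega_n$ with the $\Omega_n$ pairwise disjoint Borel sets. By property (3) the partial sums $\sum_{n=1}^N E_{\Omega_n}$ converge to $E_\Omega$ in the strong operator topology, which means $\sum_{n=1}^N E_{\Omega_n}v\to E_\Omega v$ in the norm of $\H$. Since $w\mapsto\langle w\,|\,v\rangle$ is a continuous linear functional, I can pass the limit through the inner product to obtain
\[
\mu_v(\Omega)=\langle E_\Omega v\,|\,v\rangle=\lim_{N\to\infty}\Big\langle \sum_{n=1}^N E_{\Omega_n}v\,\Big|\,v\Big\rangle=\lim_{N\to\infty}\sum_{n=1}^N\langle E_{\Omega_n}v\,|\,v\rangle=\sum_{n=1}^\infty\mu_v(\Omega_n).
\]

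The only point requiring care is the transition from operator-level strong convergence to the scalar-level convergence of the series; this is exactly where continuity of the inner product is invoked, and it is the step I would expect to be the (mild) crux of the argument. Once these conditions are in hand, $\mu_v$ is a finite positive Borel measure, and the notation $\langle E_\l v\,|\,v\rangle$ for the value at $\Omega=(-\infty,\l)$ records its distribution function.
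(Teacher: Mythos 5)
Your proposal is correct and complete: positivity via $\mu_v(\Omega)=\|E_\Omega v\|^2$, finiteness via $E_{(-\infty,\infty)}=I$, and countable additivity by passing the strong convergence of $\sum_{n=1}^N E_{\Omega_n}$ (Fact \ref{Fact&Resolution&Of&Identity}(3)) through the continuous functional $w\mapsto\langle w\,|\,v\rangle$. The paper itself gives no proof of this Fact, quoting it directly from Reed--Simon, and your argument is exactly the standard verification that source intends, so there is no divergence to report.
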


\begin{fact}[Spectral Theorem-Integral Decomposition form, Theorem VIII.6 in \cite{ReedSi}]\label{Theorem&Spectral&Theorem&Integral&Representation&Form}
Let $Q$ be a closed unbounded self-adjoint operator on $H$ and let $h$ be a (possibly unbounded) Borel measurable function on $\R$. Then the (possibly unbounded) operator $h(Q)$ such that for every $v\in H$ 
\[\langle \p(h)v\ | v\rangle :=\int_{-\infty}^\infty h(l)d\langle E_\l v\ |\ v\rangle,\]  
whenever $v\in D(\p(h))$, with
\[D(\p(h)):=\{v\in h\ |\ \int_{-\infty}^\infty|h(l)|^2d\langle E_\l v\ |\ v\rangle<\infty\},\]
is such that $\p(h)$ satisfies properties 1-4 of Theorem \ref{Theorem&Spectral&Theorem&Functional&Calculus} and if $h$ is a bounded borel measurable function on $\R$, then $\p(h)$ is exactly the operator described in Theorem \ref{Theorem&Spectral&Theorem&Functional&Calculus}.
\end{fact}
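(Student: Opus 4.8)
The plan is to extend the bounded Borel functional calculus $\p$ of Theorem \ref{Theorem&Spectral&Theorem&Functional&Calculus} to an unbounded $h$ by truncation, using the scalar spectral measures from Fact \ref{esquareisaregularmeasure} as bookkeeping. For each $v\in H$ write $\m_v$ for the finite Borel measure $\Omega\mapsto\langle E_\Omega v\mid v\rangle$; by Fact \ref{Fact&Resolution&Of&Identity} its total mass is $\langle E_\R v\mid v\rangle=\|v\|^2$, so the condition $\int_{-\infty}^\infty|h|^2\,d\m_v<\infty$ defining $D(\p(h))$ is meaningful. The first point I would establish is the isometry identity $\|\p(g)v\|^2=\int|g|^2\,d\m_v$ for every bounded Borel $g$: this follows from property $1$ (that $\p$ is a $^*$-homomorphism), since $\|\p(g)v\|^2=\langle\p(\bar g)\p(g)v\mid v\rangle=\langle\p(|g|^2)v\mid v\rangle$, together with the fact that $\Omega\mapsto\langle\p(\chi_\Omega)v\mid v\rangle=\langle E_\Omega v\mid v\rangle$ is exactly $\m_v$, so that $\langle\p(g)v\mid v\rangle=\int g\,d\m_v$ first for simple $g$ and then, by property $4$ and dominated convergence, for all bounded Borel $g$.

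With this identity in hand I would define $\p(h)$ on $D(\p(h))$ by truncation. Put $h_n:=h\cdot\chi_{\{|h|\le n\}}$, a bounded Borel function, so $\p(h_n)\in B(H)$ is already defined. For $v\in D(\p(h))$ the computation $\|\p(h_n)v-\p(h_m)v\|^2=\int|h_n-h_m|^2\,d\m_v$ and dominated convergence (the integrand is dominated by $4|h|^2\in L^1(\m_v)$ and tends to $0$ pointwise) show that $(\p(h_n)v)_n$ is Cauchy; I would set $\p(h)v:=\lim_n\p(h_n)v$ and check this is independent of the chosen truncation. Passing to the limit in $\langle\p(h_n)v\mid v\rangle=\int h_n\,d\m_v$ then yields the defining formula $\langle\p(h)v\mid v\rangle=\int h\,d\m_v$. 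Density of $D(\p(h))$ follows because $E_{[-n,n]}v\in D(\p(h))$ for every $v$ while $E_{[-n,n]}v\to v$ by Fact \ref{Fact&Resolution&Of&Identity}.

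I would then verify properties $1$--$4$ by the same truncation-plus-limit scheme: each algebraic identity and each continuity statement holds for the bounded $h_n$ by Theorem \ref{Theorem&Spectral&Theorem&Functional&Calculus} and survives the limit via the isometry identity and dominated convergence; for property $3$ the choice $h(x)=x$ recovers $Qv=\lim_n\p(h_n)v$ on $D(Q)$, so that $\p(\mathrm{id})=Q$ with the stated domain. Finally, when $h$ is bounded, $\int|h|^2\,d\m_v\le\|h\|_\infty^2\|v\|^2$ gives $D(\p(h))=H$ and the truncations are eventually equal to $h$, so the extended $\p(h)$ coincides with the operator of Theorem \ref{Theorem&Spectral&Theorem&Functional&Calculus}.

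The conceptually cleanest route, and the one I would use to double-check the domain computations, is the multiplication model of Theorem \ref{Theorem&Spectral&Theorem&Multiplication&Form}: under the unitary $U\colon H\to L^2(X,\m)$ conjugating $Q$ to multiplication by the real function $f$, one expects $\p(h)=U^{-1}M_{h\circ f}U$, where $M_{h\circ f}$ is multiplication by $h\circ f$. There the algebraic relations are just pointwise operations on functions and $M_{h\circ f}$ is manifestly closed and self-adjoint when $h$ is real, while the identity $\m_v=f_{*}\big(|Uv|^2\,\m\big)$ turns the domain condition $\int|h|^2\,d\m_v<\infty$ into $\|M_{h\circ f}(Uv)\|^2<\infty$, matching the definition exactly. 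The main obstacle in either approach is the operator-theoretic bookkeeping of unbounded domains: proving $\p(h)$ is densely defined and closed, and stating the $^*$-homomorphism property $1$ with the correct domain caveats (products $\p(h_1)\p(h_2)$ agree with $\p(h_1h_2)$ only on the intersection of the relevant domains). The analytic core (Cauchy sequences, dominated convergence, pushforward of measures) is routine once the isometry identity $\|\p(g)v\|^2=\int|g|^2\,d\m_v$ is available.
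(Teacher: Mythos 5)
The paper offers no proof to compare against: this statement is imported verbatim as a Fact from Reed--Simon (Theorem VIII.6), with the citation standing in for the argument. Your truncation scheme is the standard textbook proof of that theorem, and your cross-check through the multiplication model of Theorem \ref{Theorem&Spectral&Theorem&Multiplication&Form} is essentially how the cited source itself organizes the material. The core of your argument is sound: the isometry identity $\|\pi(g)v\|^2=\int|g|^2\,d\mu_v$ (writing $\mu_v(\Omega)=\langle E_\Omega v\,|\,v\rangle$) does follow from the $^*$-homomorphism property plus approximation by simple functions, the Cauchy estimate $\|\pi(h_n)v-\pi(h_m)v\|^2=\int|h_n-h_m|^2\,d\mu_v\to 0$ with dominating function $4|h|^2\in L^1(\mu_v)$ is correct, and you are right to flag that property 1 only holds with domain caveats for products of unbounded functions.

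One step fails as written: the density of $D(\pi(h))$. You claim $E_{[-n,n]}v\in D(\pi(h))$ for every $v$, but $\mu_{E_{[-n,n]}v}=\mu_v(\,\cdot\,\cap[-n,n])$ only localizes the measure to $[-n,n]$, and a finite-valued Borel function need not be bounded on a compact interval: take $h(x)=1/x$ for $x\neq 0$ and $h(0)=0$; if $0$ is a non-isolated point of the spectrum, $\mu_v$ can charge a sequence of points accumulating at $0$ fast enough that $\int_{[-1,1]}|h|^2\,d\mu_v=\infty$, so $E_{[-1,1]}v\notin D(\pi(h))$. The repair is immediate and uses sets you already have: put $\Omega_n:=\{x:|h(x)|\le n\}$, the level sets defining your truncations $h_n$. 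Then $\int|h|^2\,d\mu_{E_{\Omega_n}v}=\int_{\Omega_n}|h|^2\,d\mu_v\le n^2\|v\|^2<\infty$, so $E_{\Omega_n}v\in D(\pi(h))$ for every $v$, and since $h$ is finite-valued, $\chi_{\Omega_n}\to 1$ pointwise, whence $E_{\Omega_n}v\to v$ by property 4 of Theorem \ref{Theorem&Spectral&Theorem&Functional&Calculus} (or item 3 of Fact \ref{Fact&Resolution&Of&Identity}). With that substitution your proof is complete; your original sets work only under an additional local boundedness hypothesis on $h$, which the Fact does not assume.
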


\begin{defin}
The \textit{essential spectrum} of a closed unbounded self adoint operator $Q$ ($\s_e(Q)$) is the set of complex values $\l$ such that for every bounded operator $S$ on $H$ and every compact operator $K$ on $H$, we have that $(Q-\l I)S\neq I+K$
\end{defin}

\bfc
Let $Q$ be a closed unbounded self adjoint operator on $H$. Then $\s_e(Q)\subseteq \s(Q)\subseteq \R$.
\efc
\bpf
Clear by definition of $\s(Q)$.
\epf

\bth\label{Weyl'sCriterion}
Let $Q$ be a closed unbouned self adjoint operator. Then, for every $\l\in \R$, the following conditions are equivalent:
\begin{enumerate}[i]
    \item  $\l\in\s_e(Q)$
    \item For every $\e>0$, $\dim (E_{(\l-\e,\l+\e)}H)=\infty$
\end{enumerate}
\eth
\begin{proof}
\begin{enumerate}
    \item[(i)$\Rightarrow$(ii)] Asume that there exists $\e>0$ such that $E_{(\l-\e,\l+\e)}H$ finite dimensional. Let 
\[h(x)=\frac{1-\chi_{(\l-\e,\l+\e)}(x)}{x-\l}.\]
Then $h$ is a bounded borel measurable function on $\R$. By Fact \ref{Theorem&Spectral&Theorem&Functional&Calculus} (functional calculus), we have that,
\[f(Q)(Q-\l I)=(Q-\l I)f(Q)=I-\chi_{(\l-\e,\l+\e)}(Q)=I-E_{(l-\e,\l+e)}H\]
Since $E_{(\l-\e,\l+\e)}(Q)$ is finite dimensional, it is compact and $\l\not\in\s_e(Q)$
    \item[(ii)$\Rightarrow$(i)] Suppose that $\l\not\in\s_e(Q)$. Then there are a bounded operator $S$ and a compact operator $K$ such that, 
        \begin{equation}\label{N-lIinvertible2}
S(Q-\l I)=(Q-\l I)S=I+K
        \end{equation}
Suppose that for some $v\in H$, $(Q-\l I)v=0$. Then $(I-K)v=0$ and, therefore, $Kv=-v$. Since $K$ is compact, this implies that $Ker(Q-\l I)$ is finite dimensional
By Hypotesis, for all $\e>0$, $\chi_{(\l-\e,\l+\e)}(Q)$ is infinite dimensional and contains $ker(Q-\l I)$ which is finite dimensional. So, for every $\e>0$ there exists $v_\e\in\chi_{(\l-\e,\l+\e)}(Q)$ such that $\|v_\e\|=1$ and $d(v_\e,ker(Q-\l I))=1$
By Theorem \ref{Theorem&Spectral&Theorem&Integral&Representation&Form}
\begin{multline*}
\|(Q-\l I)v_\e\|^2=\langle (Q-\l I)^*(Q-\l I)\chi_{(\l-\e,\l+\e)}(Q)(v_\e)|v_\e\rangle=\\
=\int_{\l-\e}^{\l+\e}|x-\l|^2d\langle E_x v_\e\ |\ v_\e\rangle\leq \int_{\l-\e}^{\l+\e}|x-\l|^2dx \leq\e^2\int_{\l-\e}^{\l+\e}dx\leq 2\e^3
\end{multline*}
and hence $Qv_\e-\l v_\e\to 0$ when $\e\to 0$. From \eqref{N-lIinvertible2} we get:
\[v_\e+kv_\e=S(Qv_\e-\l v_\e)\to 0 \text{ when }\e\to 0.\]
By compactness of $k$, there exists a sequence $(v_n)\subseteq\{v_\e\ |\ \e>0\}$ such that $kv_n\to v$ when $n\to\infty$ for some $v\in H$. It follows that $v_n\to-v$ and, since $\|v_n\|=1$, we get $\|v\|=1$. Since $Q(v_n)-\l v_n\to 0$ when $n\to \infty$, we get $Qv=\l v$, and hence:
\[\|v_n-v\|\geq d(v_n,ker(Q-\l I))=1,\]
which is a contradiction.
\end{enumerate}
\end{proof}

\bdf
Let $Q$ be a closed unbounded self adjoint operator on $H$. The \textit{discrete spectrum} of $Q$ is the set:
\[\s_d(Q):=\s(Q)\setminus\s_e(Q)\]
\edf

\bdf\label{apequiv}
Let $Q_1$ and $Q_2$ be closed unbounded self adjoint operators defined on Hilbert spaces $H_1$ and $H_2$ respectively. Then $(H_1,\G_{Q_1})$ and $(H_2,\G_{Q_2})$ are said to be \textit{spectrally equivalent} ($Q_1\sim_\s Q_2$) if both of the following conditions hold:
    \begin{enumerate}
		\item $\sigma(Q_1)=\sigma(Q_2)$.
        \item $\sigma_e(Q_1)=\sigma_e(Q_2)$.
        \item $\dim\{x\in H_1\ |\ Q_1x=\lambda x\}=\dim\{x\in H_2\ |\ Q_2x=\lambda x\}$ for $\lambda \in \s(Q_1)\setminus \sigma_e(Q_1)$.
    \end{enumerate}
\edf

\bth[Classical Weyl theorem, Example 3 of Section XIII.4 in \cite{ReedSi}]\label{Classical&Weyl&Theorem}
If $Q$ is a (possibly unbounded) self-adjoint operator and $K$ is a compact operator on $H$. Then $\s_e(Q)=\s_e(Q+K)$.
\eth

\blm[See Lemma II.4.3 in \cite{Da}]\label{Lemma&Equivalence&Dense&Subsets}
Suppose $X$ is a metric space. Let $\{\xi_k\ |\ k\geq 1\}$ and $\{\zeta_k\ |\ k\geq 1\}$ be two countable dense subsets of $X$ such that each isolated point of $X$ is repeated the same number of times in each sequence. Then, given $\e>0$ there is a permutation $\p$ of $\Z_+$ such that dist$(\xi_k,\zeta_{\p(k)})<\e$ for all $k\geq 1$ and 
\[\lim_{k\to\infty}dist(\xi_k,\zeta_{\p(k)})=0\]
\elm

\brm
In \cite{Da}, Lemma II.4.3 states the same thing for $X$ being a compact metric space. However, as we will see, proof works even in the case of a general metric space.
\erm

\bpf[Proof of Lemma \ref{Lemma&Equivalence&Dense&Subsets}]
Without loss of generality, we assume that $X$ has no isolated point of $X$ is repeated a finite number of times. Let $\e>0$ be given. Let $k$ be a positive integer and let us assume by induction that $\p(i)$ and $\p^{-1}(i)$ are already defined for $i=1,\dots,k-1$. 

If $\p(k)$ is not yet defined, we take the least $l\neq\{\p(1),\dots,\p(k-1)\}$ such that 
\[|\xi_k-\zeta_l|<\frac{\e}{2^k}\]
This is possible since $\{\zeta_n\ |\ n\geq 1\ n\neq\{\p(1),\dots,\p(k-1)\}\}$ is still dense in $X$.
Similarly, if $\p^{-1}(k)$ is not yet defined, we take the least $l\neq\{\p^{-1}(1),\dots,\p^{-1}(k-1)\}$ such that 
\[|\xi_l-\zeta_k|<\frac{\e}{2^k}\]
This is possible since $\{\xi_n\ |\ n\geq 1\ n\neq\{\p^{-1}(1),\dots,\p^{-1}(k-1)\}\}$ is still dense in $X$.

Defining $\p$ this way, every $k\in\Z_+$ will be eventually included in the domain and in the range of $\p$ once and only once. So it defines a permutation of $\Z_+$ with the desired properties. 
\epf

\bth[Weyl-Von Neumann-Berg, Corollary 2 in \cite{Be}]\label{Weyl&vonNeumann&Berg}
Let $Q$ be a not necessarilly bounded self adjoint operator on a separable Hilbert space $H$. Then for every $\e>0$ there exists a diagonal operator $D$ and a compact operator $K$ on $H$ such that $\|K\|<\e$ and $Q=D+K$.
\eth

\begin{defin}\label{ApproximatelyUnitarilyEquivalentOperators}
Two unbounded closed self adjoint operators $Q_1$ and $Q_2$ on a separable Hilbert spaces $H_1$ and $H_2$ are said to be \textit{approximately unitarily equivalent} if there exists a sequence of unitary operators $(U_n)_{n<\omega}$ from $H_1$ to $H_2$ such that for every $n\in\Z_+$, $Q_2- U_nQ_1U_n^*$ is compact and for all $\e>0$, there is $n_\e$ such that for every $n\geq n_\e$, $\|Q_2- U_nQ_1U_n^*\|<\e$.
\end{defin}

\bth[See II.4.4 in \cite{Da}]\label{Theorem&Consecuence&Weyl&vonNeumannBerg}
Suppose $Q_1$ and $Q_2$ are unbounded closed self adjoint operators on a separable Hilbert space $H$. Then $Q_1$ and $Q_2$ are approximately unitarily equivalent if and only if $Q_1\sim_\s Q_2$.
\eth

\brm
As in Lemma \ref{Lemma&Equivalence&Dense&Subsets}, we give a proof which is very similar to the one presented in Lemma II.4.3 in \cite{Da}. 
\erm

\bpf[Proof of Theorem \ref{Theorem&Consecuence&Weyl&vonNeumannBerg}]
	\bdc
		\item[$\Rightarrow$] Suppose $Q_1$ and $Q_2$ are approximately unitarily equivalent, and let $(U_n)_{n<\omega}$ be unitary operators from $H$ to $H$ such that for every $n\in\Z_+$, $Q_2- U_nQ_1U_n^*$ is compact and for all $\e>0$, there is $n_\e$ such that for every $n\geq n_\e$, $\|Q_2- U_nQ_1U_n^*\|<\e$. It is clear that $\s(Q_1)=\s(Q_2)$ and if $h$ is a bounded Borel function on $\R$ then 
\[f(Q_2- U_nQ_1U_n^*)=f(Q_2)- U_nf(Q_1)U_n^*\]
is compact and for all $\e>0$, there is $n_\e$ such that for every $n\geq n_\e$, 
\[\|f(Q_2)- U_nf(Q_1)U_n^*\|<\e.\]
In particular, if $f=\chi_{\{x\}}$ is the characteristic function of an isolated point $\s(Q_1)$, then for every $n\in\Z_+$,
\[E_{Q_2}(\{x\})-U_nE_{Q_1}(\{x\})U_n^*=\chi_{\{x\}}(Q_2)-U_n\chi_{\{x\}}(Q_2)U_n^*\]
is compact and for all $\e>0$, there is $n_\e$ such that for every $n\geq n_\e$,
\[\|E_{Q_2}(\{x\})-U_nE_{Q_1}(\{x\})U_n^*\|=\|\chi_{\{x\}}(Q_2)-U_n\chi_{\{x\}}(Q_2)U_n^*\|<\e\]
So, the eigenspace of every isolated point in $\s(Q_1)=\s(Q_2)$ has the same dimension, $\s_e(Q_1)=\s_e(Q_2)$ and $\s_d(Q_1)=\s_d(Q_2)$.
		\item[$\Leftarrow$] Let us suppose at first that $Q_1$ and $Q_2$ are diagonal, and let $Q_1=diag(\xi_k)$ with respect to a basis $v_k$ and $Q_1=diag(\zeta_k)$ with respect to a basis $w_k$. Then $(xi_k)$ and $(\zeta_k)$ are dense in $\s(Q_1)=\s(Q_2)$ and if $\l$ is an isolated point in $\s(Q_1)=\s(Q_2)$, then $\l$ is repeated the same number of times (the dimension of its corresponding eigenspace). Therefore $\s_e(Q_1)=\s_e(Q_2)$.
		
Let $X:=\s_e(Q_1)=\s_e(Q_2)$. Given $\e>0$, by Lemma \ref{Lemma&Equivalence&Dense&Subsets}, there is a permutation $\p$ of $\Z_+$ such that $|\xi_k,\zeta_{\p(k)}|<\e$ for all $k$ and 
\[\lim_{n\to\infty}|\xi_k,\zeta_{\p(k)}|=0\]
Then the unitary operator given by $Uv_k:=w_{\p(k)}$ is such that the operator
\[Q_1-UQ_2U^*=diag(\xi_k-\zeta_{\p(k)})\]
is compact and has norm less than $\e$.

For the more general case, $Q_1$ can be decomposed as $Q=Q_1^d\oplus Q_1^\prime$ such that $\s(Q_1^d)=\s_d(Q_1^d)=\s_d(Q)$, the dimensions of the eigenspaces of elements in $Q_1^d$ are the same as the dimensions of the eigenspaces of values in $\s_d(Q)$ and $\s(Q^\prime)=\s_e(Q)$. The samething happens for $Q_2$ and $Q_1^d$ and $Q_2^d$ coincide. Because of this we can assume that $\s_d(Q)=\emptyset$. Now, given $\e>0$ there are there are diagonal operators $D_1$ and $D_2$ and compact operators $K_1$ and $K_2$ such that $Q_1=D_1+K_1$, $Q_1=D_1+K_1$ and $\|K_1\|$, $\|K_2\|<\e$. By Weyl's theorem (Theorem \ref{Classical&Weyl&Theorem}) $\s(D_1)=\s_e(Q_1)=\s_e(Q_2)=\s(D_2)$. By the diagonal operators case, this implies that $Q_1$ and $Q_2$ are approximately unitarily equivalent.
	\edc
\epf

\bdf
Let $Q$ be a closed unbounded selfadjoint operator on a Hilbert space $H$. For $\l\in\s_d(Q)$, let $n_\l$ be the dimension of the eigenspace corresponding to $\l$. We define the \textit{discrete part} of $H$ in the following way:
\[H_d:=\bigoplus_{\l\in\s_d(Q)}\C^{n_\l}\]
In the same way, we define $Q_d:=Q\upharpoonright H_d$
\edf

\bfc
$H_d\subseteq H$
\efc

\bdf
Let $Q$ be a closed unbounded selfadjoint operator on a Hilbert space $H$. We define the \textit{essential part} of $H$ in the following way:
\[H_e:=H_d^\perp\]
In the same way, we define $Q_e:=Q\upharpoonright H_e$
\edf

\begin{defin}\label{DefinitionHv}
Given $G\subseteq H$ and $v\in H$, we denote by:
\ben 
\item $H_G$, the Hilbert subspace of $H$ generated by the elements $f(Q)v$, where $v\in G$, $f$ is a bounded Borel function on $\R$ and $v\in D(f(Q))$.
\item $Q_G:=Q\upharpoonright H_G$.
\item $H_v$, the space $H_G$ when $G=\{v\}$ for some vector $v\in H$
\item $Q_v:=Q_G$ when $G=\{v\}$.
\item $H_G^\perp$, the orthogonal complement of $H_G$
\item $P_G$, the projection over $H_G$.
\item $P_{G^\perp}$, the projection over $H_G^\perp$.
\een
\end{defin}

\bdf
Given $G\subseteq H$ and $v\in H$, we denote by $(H_G)_d$ and $(H_G)_e$ the projections of $H_G$ on $H_d$ and $H_e$ respectively.
\edf

\bdf
Let $v\in H$, the \textit{spectral measure defined by} $v$ (denoted by $\m_v$) is the finite borel measure that to any borel set $\Omega\subseteq \R$ assigns the (complex) number,
\[\m_v(\Omega):=\langle \chi_\Omega(Q)v\ |\ v\rangle\]
\edf

\blm[Lemma XII.3.1 in \cite{DuSchw}]\label{HvsimL2muv}
For $v\in H$, the space $H_v\simeq L^2(\R,\m_v)$.
\elm

\blm[Lemma XII.3.2 in \cite{DuSchw}]
There is a set $G\subseteq H$ such that $H=\oplus_{v\in G}H_v$.
\elm

\bco\label{Corollary&Decomposition&H}
There is a set $G\subseteq H$ such that $H=H_d\oplus\bigoplus_{v\in G}H_v$
\eco

\section{a metric abstract elementary defined by $(H;Q)$}\label{(H-Q)MAEC}
In this section we define a \textit{metric abstract elementary class} associated with a closed unbounded self-adjoint operator $Q$ defined on a Hilbert space (see Definition \ref{Definition&MAEC}). We will recall several notions related with metric abstract elementary classes that come from \cite{HiHir1}.

\bdf
An $\mathcal{L}$-\textit{metric structure} $\MM$, for a fixed similarity type $\mathcal{L}$, consists of:
\bitm
	\item A closed metric space $(M,d)$
	\item A family $(R^{\MM})_{R\in\mathcal{L}}$ of continuous functions from $M^{n_R}$ into $\R$, where $n_R$ is the arity of $R$.
	\item An indexed family $(F^\MM)_{F\in\mathcal{L}}$ of continuous functions on powers of $M$.
	\item An indexed family $(c^\MM)_{c\in\mathcal{L}}$ of distinguished elements of $M$.
\eitm
We write this structure as
\[\MM=(M,d,(R^{\MM})_{R\in\mathcal{L}},(F^{\MM})_{F\in\mathcal{L}},(c^{\MM})_{c\in\mathcal{L}}).\]
If $\MM$ is a metric structure, $dens(\MM)$ denotes the smallest cardinal of a dense subset of $M$.
\edf

\bdf\label{Definition&Hilbert&Structure}
Let $\mathcal{L}=(0,-,i,+,(I_r)_{r\in\Q},\|\cdot\|,\G_Q)$. A \textit{Hilbert space operator} structure for $\mathcal{L}$ is a metric structure of only one sort: 
\[(H,0,+,i,(I_r)_{r\in\Q},\|\cdot\|,\G_Q)\] 
where
\begin{itemize}
	\item $H$ is a Hilbert space
	\item $Q$ is a closed (unbounded) selfadjoint operator on $H$
	\item $0$ is the zero vector in $H$
	\item $+:H\times  H\to H$ is the usual sum of vectors in $H$
	\item $i: H\to H$ is the function that to any vector $v\in H$ assigns the vector $iv$ where $i^2=-1$
	\item $I_r:H\to H$ is the function that sends every vector $v\in H$ to $rv$, where $r\in\Q$
	\item $\|\cdot\|: H \to \R$ is the norm function
	\item $\G_Q:H\times H\to \R$ is the function that to any $v$, $w\in H$ asigns the number $\G_Q(v,w)$, which is the distance of $(v,w)$ to the graph of $Q$. Since $Q$ is closed, $\G_Q(v,w)=0$ if and only if $(v,w)$ belongs to the graph of $Q$.
\end{itemize}
 Briefly, the structure will be refered to either as $(H,\G_Q)$. 
$(H,\G_Q)$ is a metric structure for the similarity type 
\edf

\blm\label{autounitary}
Let $Q_1$ and $Q_2$ be closed unbounded self adjoint operators defined on Hilbert spaces $H_1$ and $H_2$ respectively. An isomorphism $U:(H_1,\G_{Q_1})\to(H_2,\G_{Q_2})$ is a unitary operator of $U:H_1\to H_2$ such that $UD(Q_1)= D(Q_2)$ and $UQ_1v=Q_2Uv$ for every $v\in D(Q_1)$.
\elm
\begin{proof}
\bdc
	\item[$\Rightarrow$] Suppose $U$ is an isomorphism between $(H_1,\G_{Q_1})$ and $(H_2,\G_{Q_2})$. It is clear that $U$ must be a linear operator. Also, we have that for every $u$, $v\in\H$ we must have that $\langle Uu\,|\,Uv\rangle=\langle u\,|\, v\rangle$ by definition of automorphism. Therefore $U$ must be an isometry and, therefore it must be unitary. 
	
On the other hand, since $U$ is an isomorphism between $(H_1,\G_{Q_1})$ and $(H_2,\G_{Q_2})$, for every $(v,w)\in H\times H$ we have that $\G_{Q_1}(v,w)=\G_{Q_2}(Uv,Uw)$. Therefore, $\G_{Q_1}(v,w)=0$ if and only if $\G_{Q_2}(Uv,Uw)=0$. So, for every $v\in D(Q_1)$, $UQ_1v=Q_2Uv$.
	\item[$\Leftarrow$] Let $U:H_1\to H_2$ be an unitary operator such that $UD(Q_1)= D(Q_2)$ and $UQ_1v=Q_2Uv$ for every $v\in D(Q_1)$. It remains to show that for every $(v,w)\in H\times H$, $\G_{Q_1}(v,w)=\G_{Q_2}(Uv,Uw)$. Let $(v,w)\in H\times H$ be any pair of vectors. There exists a sequence of pairs $(v_n,w_n)_{n\in\N}$ such that for every $n\in\N$, $v_n\in D(Q_1)$, $w_n=Q_1v_n$ and $\G_{Q_1}(v,w)=\lim_{n\to\infty}d[(v,w);(v_n,w_n)]$.
	
By hypothesis, $U$ is an isometry, and maps the graph of $Q_1$ into the graph of $Q_2$; so for all $n\in\N$, $Uv_n\in D(Q_2)$ and $Uw_n=Q_2v_n$. We have that 
\[
\lim_{n\to\infty}d[(Uv,Uw);(Uv_n,Uw_n)]=\lim_{n\to\infty}d[(v,w);(v_n,w_n)]=\G_{Q_1}(v,w).\]
So $\G_{Q_2}(Uv,Uw)\leq \G_{Q_1}(v,w)$. Repeating the argument for $U^{-1}$, we get $\G_{Q_1}(v,w)\leq \G_{Q_2}(Uv,Uw)$.
\edc
\end{proof}

\bfc
Let $(H_1,\G_{Q_1})$, $(H_2,\G_{Q_2})$ and $(H_3,\G_{Q_3})$ such that $Q_1\sim_\s Q_2$. If $(H_2,\G_{Q_2})$ and $(H_3,\G_{Q_3})$ are isomorphic, then $Q_1\sim_\s Q_3$.
\efc

\bdf\label{Definition&MAEC}
A \textit{Metric Abstract Elementary Class} (MAEC), on a fixed similarity type $\mathcal{L}(\mathcal{K})$, is a class $\mathcal{K}$ of $\mathcal{L}(\mathcal{K})$-metric structures provided with a partial order $\prec_{\mathcal{K}}$ such that:
\ben
\item Closure under isomorphism:
	\ben
		\item For every $\MM\in \mathcal{K}$ and every $\mathcal{L}(\mathcal{K})$-structure $\NN$, if $\MM\simeq \NN$ then $N\in \mathcal{K}$.
		\item Let $\NN_1$, $\NN_2\in\mathcal{K}$ and $\MM_1$, $\MM_2\in\mathcal{K}$ be such that there exists $f_l:\NN_l\simeq \MM_l$ (for $l=1,2$) satisfying $f_1\subseteq f_2$. Then $\NN_1\prec_{\mathcal{K}}\NN_2$ implies that $\MM_1\prec_{\mathcal{K}}\MM_2$.
	\een
\item For all $\MM$, $\NN\in\mathcal{K}$ if $\MM\prec_{\mathcal{K}}\NN$ then $\MM\subseteq \NN$.
\item Let $\MM$, $\NN$ and $\MM^*$ be $\mathcal{L}(\mathcal{K})$-structures. If $\MM\subseteq \NN$, $\MM\prec_{\mathcal{K}}\MM^*$ and $\NN\prec_{\mathcal{K}}\MM^*$, then $\MM\prec_{\mathcal{K}}\NN$.
\item Downward L\"owenheim-Skolem: There exists a cardinal $LS(\mathcal{K})\geq \aleph_0+|\mathcal{L}(\mathcal{K})|$ such that for every $\MM\in\mathcal{K}$ and for every $A\subseteq M$ there exists $\NN\in\mathcal{K}$ such that $\NN\prec_{\mathcal{K}}\MM$, $N\supseteq A$ and $dens(N)\leq |A|+LS(\mathcal{K})$.
\item Tarski-Vaught chain:
	\ben
		\item For every cardinal $\m$ and every $\NN\in\mathcal{K}$, if $\{\MM_i\prec_{\mathcal{K}}\NN\ |\ i<\m\}\subseteq \mathcal{K}$ is $\prec_{\mathcal{K}}$-increasing and continuous (i.e. $i<j\Rightarrow \MM_i\prec_{\mathcal{K}}\MM_j$) then $\overline{\bigcup_{i<\m}\MM_i}\in\mathcal{K}$ and $\overline{\bigcup_{i<\m}\MM_i}\prec_{\mathcal{K}}\NN$.
		\item For every $\m$, if $\{\MM_i\ |\ i<\m\}\subseteq \mathcal{K}$ is $\prec_{\mathcal{K}}$-increasing (i.e. $i<j\Rightarrow \MM_i\prec_{\mathcal{K}}\MM_j$) and continuous then $\overline{\bigcup_{i<\m}\MM_i}\in\mathcal{K}$ and for every $j<\m$, $\MM_j\prec_{\mathcal{K}}\overline{\bigcup_{i<\m}\MM_i}$.
	\een
Here, $\overline{\bigcup_{i<\m}\MM_i}$ denotes the completion of $\bigcup_{i<\m}\MM_i$.
\een
\edf

\bdf
Let $(H,\G_Q)$ be a structure as described in Definition \ref{Definition&Hilbert&Structure}. Let $\LL$ the similarity type of $(H,\G_Q)$. We define $\K_{(H,\G_Q)}$ to be the following class:
\[\K_{(H,\G_Q)}:=\{(H^\prime,Q^\prime)\ |\ (H^\prime,Q^\prime)\text{ is an }\LL\text{ Hilbert space operator structure and }Q^\prime\sim_\s Q\}\]
We define the relation $\prec_{\K}$ in $\K_{(H,\G_Q)}$ by:
\[(H_1,\G_{Q_1})\prec_\K(H_2,\G_{Q_2})\text{ if and only if }H_1\subseteq H_2\text{ and }Q_1\subseteq Q_2\]
\edf

\bth\label{Theorem&KHQ&Is&MAEC}
The class $\K_{(H,\G_Q)}$ is a MAEC.
\eth
\bpf
\ben
	\item Closure under isomorphism:
		\ben
			\item Clear by Lemma \ref{autounitary}.
			\item Clear.
		\een
	\item Clear.
	\item Clear.
	\item $LS(\mathcal{K})\leq 2^{2^{\aleph_0}}$. We first prove following claim:
		\bcl
		If $(H^\prime,Q^\prime)\in\K_{(H,\G_Q)}$, there is a $(H^{\prime\prime},Q^{\prime\prime})\prec(H^\prime,Q^\prime)$ such that $(H^{\prime\prime},Q^{\prime\prime})\in\K$ and $|H^{\prime\prime}|\leq 2^{2^{\aleph_0}}$.
		\ecl
		\bpf
		By Corollary \ref{Corollary&Decomposition&H}, there is a set $G^\prime\subseteq H^\prime$ such that $H^\prime=H_d\oplus\bigoplus_{v\in G^\prime}H_v^\prime$. Since there are at most $2^{2^{\aleph_0}}$ many Borel measures, there is a $G^{\prime\prime}\subseteq G$ such that $|G^{\prime\prime}|\leq 2^{2^{\aleph_0}}$ and for every $v\in G^\prime$ there is a $w\in G^{\prime\prime}$ such that $\m_v=\m_w$. Take \[H^{\prime\prime}=H_d\oplus\bigoplus_{v\in G^{\prime\prime}}H_v^\prime\]
 and 
\[Q^{\prime\prime}:=Q^\prime\upharpoonright H^{\prime\prime}\]

Then $(H^{\prime\prime},Q^{\prime\prime})\in\K_{(H,\G_Q)}$, $(H^{\prime\prime},Q^{\prime\prime})\prec(H^\prime,Q^\prime)$ and $|H^{\prime\prime}|\leq 2^{2^{\aleph_0}}$.
		\epf 
		
		Now, let $(H^\prime,Q^\prime)\in\K$ and $A\subseteq H^\prime$. Let $G^\prime$ be as in Corollary \ref{Corollary&Decomposition&H} and let $(H^{\prime\prime},Q^{\prime\prime})$ be as in previous Claim. Since $A\subseteq H_d\oplus\bigoplus_{v\in G^{\prime\prime}}H_v^\prime$, there is a $G_A\subseteq G^{\prime\prime}$ tal que $|G_A|\leq |A|\aleph_0$ such that $A\subseteq H_d\oplus\bigoplus_{v\in G_A}H_v^\prime$. 
		
		Let 
\[\hat{H}:=H_d\oplus\bigoplus_{v\in G_A\cup G^{\prime\prime}}H_v^\prime\]
and
\[Q^{\prime\prime}:=Q^\prime\upharpoonright \hat{H}\]
Then $(\hat{H},\hat{Q})\in\K_{(H,\G_Q)}$, $(\hat{H},\hat{Q})\prec(H^\prime,Q^\prime)$, $A\subseteq \hat{A}$ and $|\hat{H}|\leq |A|+2^{2^{\aleph_0}}$.
	\item Tarski-Vaught chain:
		\ben
			\item Suppose $\kappa$ is a regular cardinal and $(\hat{H},\hat{Q})\in\K_{(H,\G_Q)}$. Let $(H_i,\G_{Q_i})_{i<\kappa}$ a $\prec_\K$ increasing sequence such that $(H_i,\G_{Q_i})\prec_\K(\hat{H},\hat{Q})$ for all $i<\kappa$. Then, for all $i<\kappa$ $(H_{i+1},Q_{i+1})=(H_i,\G_{Q_i})\oplus (H^\prime_i,Q^\prime_i)$, where $H^\prime_i$ is a Hilbert space and $Q^\prime_i)$ is a (possibly unbounded) closed selfadoint operator such that $\s_d(Q^\prime_i)=\emptyset$ and $\s_e(Q^\prime_i)\subseteq\s_e(\hat{Q})$. Then $\overline{\bigcup_{i<\kappa}}(H_i,\G_{Q_i})=H_0\bigoplus_{i<\kappa}(H^\prime_i,Q^\prime_i)$. Since $(H_i,\G_{Q_i})\prec_\K(\hat{H},\hat{Q})$, $\overline{\bigcup_{i<\kappa}}(H_i,\G_{Q_i})\prec_\K(\hat{H},\hat{Q})$.
			\item Clear from previous item.
		\een
\een
\epf

\brm
From now on, the relation $\prec_{\mathcal{K}}$ in $\mathcal{K}_{(H,\G_Q)}$ will be denoted as $\prec$.
\erm

\bdf
Let $(\K,\prec_\K)$ be a MAEC and let $\MM$, $\NN\in\K$ be two structures. An emdedding $f:\MM\to\NN$ such that $f(\MM)\prec_\K\NN$ is called a $\K$-\textit{embedding}. 
\edf

\bdf
A MAEC $\K$ has the \textit{Joint Embedding Property} (JEP) if for any $\MM_1$, $\MM_2\in\K$ there are $\NN\in\K$ and a $\K$-embeddings $f:\MM_1\to\NN$ and $g:\MM_2\to\NN$.
\edf

\bth
$\K_{(H,\G_Q)}$ has the JEP.
\eth
\bpf
Let $(H_1,\G_{Q_1})$, $(H_1,Q_2)\in\K_{(H,\G_Q)}$. Wiothout loss of generality, we can assume that $H_1\cap H_2=\emptyset$. By Corollary \ref{Corollary&Decomposition&H}, there are sets $G_1\subseteq H_1$ and $G_2\subseteq H_2$ such that $H_1=H_d\oplus\bigoplus_{v\in G_1}(H_1)_v$ and $H_2=H_d\oplus\bigoplus_{v\in G_2}(H_2)_v$.

Let
\[\hat{H}=H_d\oplus\bigoplus_{v\in G_1}L^2(\R,\m_v)\oplus\bigoplus_{v\in G_2}L^2(\R,\m_v)\]
By the Spectral Theorem-Multiplication Form and Lemma \ref{HvsimL2muv} for every $v\in G_1\cup G_2$, there is a Borel function $f_v$ and an isomorphism $U_v:H_v\to L^2(\R,\m_v)$ such that $(H_v,Q\upharpoonright H_v)\simeq (L^2(\R,\m_v),M_{f_v})$, where $M_{f_v}$ is the multiplication by $f_v$ in $L^2(\R,\m_v)$.
If
\[\hat{Q}:=(Q_1\upharpoonright H_d)\oplus\bigl(\bigoplus_{v\in G_1}M_{f_v}\bigr)\oplus\bigl(\bigoplus_{v\in G_2}M_{f_v}\bigr)\]
then, $id_{H_d}\oplus\bigoplus_{v\in G_1}U_v$ and $id_{H_d}\oplus\bigoplus_{v\in G_2}U_v$ are respective $\K_{(H,\G_Q)}$-embeddings from $(H_1,\G_{Q_1})$ and $(H_2,\G_{Q_2})$ to $(\hat{H},\hat{Q})$.
\epf

\bdf
A MAEC $\K$ has the \textit{Amalgamation Property} (AP) if for any $\MM$, $\NN_1$, $\NN_2\in\K$ such that $\MM\prec_\K\NN_1$ and $\MM\prec_\K\NN_2$, there are $\MM^\prime\in\K$ and a $\K$-embeddings $f:\NN_1\to\MM^\prime$ and $g:\NN_2\to\MM^\prime$ such that $f(\NN_1)$, $g(\NN_2)\prec_\K\MM^\prime$. and $f\upharpoonright \MM=g\upharpoonright \MM$.
\edf

\bth\label{Theorem&Amalgamation&Property}
$\K_{(H,\G_Q)}$ has the AP.
\eth
\bpf
Let $(H_1,\G_{Q_1})$, $(H_2,\G_{Q_2})$ and $(H_3,\G_{Q_3})\in\K_{(H,\G_Q)}$ be such that $(H_1,\G_{Q_1})\prec (H_2,\G_{Q_2})$ and $(H_1,\G_{Q_1})\prec (H_3,\G_{Q_3})$. By Corollary \ref{Corollary&Decomposition&H}, there are sets $G_1\subseteq H_1$, $G_2\subseteq H_2$ and $G_3\subseteq H_3$ such that:
 
\bitm
	\item $H_1=H_d\oplus\bigoplus_{v\in G_1}(H_1)_v$
	\item $H_2=H_d\oplus\bigoplus_{v\in G_1}(H_1)_v\oplus\bigoplus_{v\in G_2}(H_2)_v$ 			\item $H_3=H_d\oplus\bigoplus_{v\in G_1}(H_1)_v\oplus\bigoplus_{v\in G_3}(H_3)_v$
\eitm

Let 
\[H_4:=H_d\oplus\bigoplus_{v\in G_1}L^2(\R,\m_v)\oplus\bigoplus_{v\in G_2}L^2(\R,\m_v)\oplus\bigoplus_{v\in G_3}L^2(\R,\m_v)\]
and
\[Q_4:=(Q_1\upharpoonright H_d)\oplus\bigl(\bigoplus_{v\in G_1}M_{f_v}\bigr)\oplus\bigl(\bigoplus_{v\in G_2}M_{f_v}\bigr)\oplus\bigl(\bigoplus_{v\in G_3}M_{f_v}\bigr)\]
Then $(H_4,Q_4)\in\K_{(H,\G_Q)}$ and $id_{H_d}\oplus\bigoplus_{v\in G_1}U_v\oplus\bigoplus_{v\in G_2}U_v$, $id_{H_d}\oplus\bigoplus_{v\in G_1}U_v\oplus\bigoplus_{v\in G_3}U_v$ are respective $\K_{(H,\G_Q)}$-embeddings from $(H_2,\G_{Q_2})$ and $(H_3,\G_{Q_3})$ to $(H_4,Q_4)$.
\epf

\brm
For $(H_1,\G_{Q_1})$, $(H_2,\G_{Q_2})$ and $(H_3,\G_{Q_3})$ as in Theorem \ref{Theorem&Amalgamation&Property}, we denote by 
\[(H_2,\G_{Q_2})\bigvee_{(H_1,\G_{Q_1})}(H_3,\G_{Q_3}):=(H_2\vee_{H_2} H_3,Q_2\vee_{Q_1} Q_3)\]
the \textit{amalgamation} of $(H_2,\G_{Q_2})$ and $(H_3,\G_{Q_3})$ over $(H_1,\G_{Q_1})$ as described in Theorem \ref{Theorem&Amalgamation&Property}.
\erm

\bdf\label{Definition&Galois&Type}
For $\MM_1$, $\MM_2\in\K$, $A\subseteq\MM_1\cap\MM_2$ and $(a_i)_{i<\a}\subseteq\MM_1$, $(b_i)_{i<\a}\subseteq\MM_2$, we say that $(a_i)_{i<\a}$ and $(b_i)_{i<\a}$ have the same \textit{Galois type} in $\MM_1$ and $\MM_2$ respectively, $(gatp_{\MM_1}((a_i)_{i<\a}/A)=gatp_{\MM_2}((b_i)_{i<\a}/A))$, if there are $\NN\in\K$ and $\K$-embeddings $f:\MM_1\to\NN$ and $g:\MM_2\to\NN$ such that $(a_i)=g(b_i)$ for every $i<\a$ and $f\upharpoonright A\equiv g\upharpoonright A\equiv Id_A$, where $Id_A$ is the identity on $A$.
\edf

\begin{theo}\label{typeoverempty}\label{typeoverA}
Let $v\in (H_1,\G_{Q_1})$, $w\in (H_2,\G_{Q_2})$ and $G\subseteq H_1\cap H_2$ such that $(H_G,\G_{Q_G})\in\mathcal{K}_{(H,\G_Q)}$, $(H_G,\G_{Q_G})\prec(H_1,\G_{Q_1})$, $(H_G,\G_{Q_G})\prec(H_2,\G_{Q_2})$
$G\subseteq H_1\cap H_2$. Then $gatp_{(H_1,\G_{Q_1})}(v/G)=gatp_{(H_2,\G_{Q_2})}(w/G)$ if and only if 
\[P_Gv=P_Gw\]
and 
\[\m_{P_{G^\perp}v}=\m_{P_{G^\perp}w}.\]
\end{theo}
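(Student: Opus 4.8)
The plan is to lean on the fact that, exactly as in Lemma \ref{autounitary}, every $\K$-embedding $f\colon(H_i,\G_{Q_i})\to\NN$ is a unitary map onto its image satisfying $f(h(Q_i)x)=h(Q_\NN)f(x)$ for every bounded Borel $h$; in particular $f$ commutes with the functional calculus, carries spectral projections to spectral projections, and preserves spectral measures in the sense that $\m^{(H_i)}_{x}=\m^{(\NN)}_{f(x)}$ (immediate from $\langle\chi_\Omega(Q_\NN)f(x)\,|\,f(x)\rangle=\langle\chi_\Omega(Q_i)x\,|\,x\rangle$). The whole statement then reduces to tracking the decomposition $v=P_Gv+P_{G^\perp}v$ under such maps.

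For the ($\Rightarrow$) direction, assume the Galois types agree, witnessed by $\NN\in\K$ and $\K$-embeddings $f,g$ with $f(v)=g(w)=:z$ and $f\!\upharpoonright\! G=g\!\upharpoonright\! G=\mathrm{Id}_G$. First I would show that $f$ and $g$ both restrict to the identity on $H_G$: since $H_G$ is generated by the vectors $h(Q)u$ with $u\in G$, and $f(h(Q_1)u)=h(Q_\NN)f(u)=h(Q_\NN)u$, the map $f$ fixes a dense subset of the copy of $H_G$ inside $\NN$, hence fixes $H_G$ pointwise, and likewise for $g$. Consequently $f$ and $g$ commute with the orthogonal projection onto $H_G$, so $f(P_Gv)=P_Gz=g(P_Gw)$ and $f(P_{G^\perp}v)=P_{G^\perp}z=g(P_{G^\perp}w)$. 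Applying $f\!\upharpoonright\! H_G=\mathrm{Id}$ to the first equality yields $P_Gv=P_Gw$, and applying measure-preservation to the second yields $\m_{P_{G^\perp}v}=\m_{P_{G^\perp}z}=\m_{P_{G^\perp}w}$.

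For the ($\Leftarrow$) direction, assume $P_Gv=P_Gw$ and $\m_{P_{G^\perp}v}=\m_{P_{G^\perp}w}$, and write $a:=P_{G^\perp}v$, $b:=P_{G^\perp}w$. Lemma \ref{HvsimL2muv} provides isometries $H_a\simeq L^2(\R,\m_a)$ and $H_b\simeq L^2(\R,\m_b)$ intertwining $Q$ with multiplication by the identity and sending the cyclic vectors $a,b$ to the constant function $1$; since $\m_a=\m_b$, their composite is a unitary intertwiner $\phi\colon H_a\to H_b$ with $\phi(a)=b$. I would then amalgamate over $H_G$ as in Theorem \ref{Theorem&Amalgamation&Property}, but identifying the cyclic summand $H_a\subseteq H_1$ with $H_b\subseteq H_2$ via $\phi$. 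Concretely, using the reducing decompositions $H_1=H_G\oplus H_a\oplus K_1'$ and $H_2=H_G\oplus H_b\oplus K_2'$, set $\NN:=H_G\oplus H_a\oplus K_1'\oplus K_2'$ with $Q_\NN$ the corresponding direct sum, let $f$ be the inclusion of $H_1$, and let $g:=\mathrm{Id}_{H_G}\oplus\phi^{-1}\oplus\mathrm{Id}_{K_2'}$. Then $f,g$ fix $G\subseteq H_G$ pointwise and $f(v)=P_Gv+a=P_Gw+a=g(w)$, so the Galois types coincide, provided $\NN\in\K$ and $f,g$ are genuine $\K$-embeddings.

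The main obstacle is precisely the verification that $\NN\in\K_{(H,\G_Q)}$, i.e. $Q_\NN\sim_\s Q$. The key observation is that $H_G$ already carries the \emph{entire} discrete part: since $Q_G\sim_\s Q$ forces $H_G$ to contain eigenvectors of full multiplicity $n_\l$ for each $\l\in\s_d(Q)$ and $Q_G\subseteq Q_1$, these exhaust the finite-dimensional eigenspaces of $Q_1$, whence $(H_1)_d\subseteq H_G$ and therefore $H_a,K_1'\subseteq H_e$; likewise $K_2'\subseteq H_e$. Thus every summand adjoined to $Q_G$ in forming $Q_\NN$ has empty discrete spectrum and spectrum contained in $\s_e(Q)$. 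Using $\s_e(A\oplus B)=\s_e(A)\cup\s_e(B)$ together with $\s_d(Q)\cap\s_e(Q)=\emptyset$, one checks $\s(Q_\NN)=\s(Q_G)=\s(Q)$, $\s_e(Q_\NN)=\s_e(Q_G)=\s_e(Q)$, and that no eigenvalue multiplicity over $\s_d(Q)$ changes, giving $Q_\NN\sim_\s Q$. Once $\NN\in\K$ is settled, $f(H_1)$ and $g(H_2)$ are $\prec$-substructures, since their operators are restrictions of $Q_\NN$ and are isomorphic to $Q_1,Q_2\in\K$, which completes the argument.
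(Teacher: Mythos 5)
Your proposal is correct and follows essentially the same route as the paper: the forward direction by observing that $\K$-embeddings fix $H_G$ pointwise (via the functional calculus and density) and preserve spectral measures, and the backward direction by amalgamating over $H_G$ and identifying the cyclic subspaces $H_{P_{G^\perp}v}$ and $H_{P_{G^\perp}w}$ through the common $L^2(\R,\m)$ model of Lemma \ref{HvsimL2muv} --- the paper routes both embeddings through a fresh $L^2(\R,\m)$ summand appended to $H_1\vee_{H_G}H_2$, whereas you glue the two cyclic parts directly, a cosmetic difference. Your explicit verification that the amalgam remains in $\K_{(H,\G_Q)}$, using the observation that $(H_G,\G_{Q_G})\in\K_{(H,\G_Q)}$ forces $(H_1)_d\subseteq H_G$ so the adjoined summands contribute nothing to the discrete spectrum, is a detail the paper's proof leaves implicit, and is a welcome addition rather than a deviation.
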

\bpf
\bdc
	\item[$\Rightarrow$)] Suppose $gatp_{(H_1,\G_{Q_1})}(v/G)=gatp_{(H_2,\G_{Q_2})}(w/G)$ and let $v^\prime:=P_{G^\perp}v$ and $w^\prime:=P_{G^\perp}w$. Then, by Definition \ref{Definition&Galois&Type}, there exists $(H_3,\G_{Q_3})\in\K_{(H,\G_Q)}$ and $\K_{(H,\G_Q)}$-embeddings $U_1:(H_1,\G_{Q_1})\to (H_3,\G_{Q_3})$ and $U_2:(H_2,\G_{Q_2})\to (H_3,\G_{Q_3})$ such that $U_1v=U_2w$ and $U_1\upharpoonright G\equiv U_2\upharpoonright G\equiv Id_G$, where $Id_G$ is the identity on $G$. Since $v=P_Gv+P_{G^\perp}v$, $w=P_Gw+P_{G^\perp}w$ and $U_1\upharpoonright G\equiv U_2\upharpoonright G\equiv Id_G$, we have that $U_1P_Gv=P_Gv$ and $U_2P_Gw=P_Gw$. Since $U_1$ and $U_2$ are embeddings, $\m_{v^\prime}=\m_{U_1v^\prime}=\m_{U_2w^\prime}=\m_{w^\prime}$.
	\item[$\Leftarrow$)] Let $v^\prime:=P_{G^\perp}v$ and $w^\prime:=P_{G^\perp}w$. Suppose $\m_{v^\prime}=\m_{w^\prime}$, then $\m_{v_e^\prime}=\m_{w_e^\prime}$ $L^2(\R,\m_{v_e^\prime})=L^2(\R,\m_{w_e^\prime})$. Let $\m:=\m_{v_e^\prime}=\m_{w_e^\prime}$. Also, let 
\[\hat{H}:=(H_1\vee_{H_G} H_2)\oplus L^2(\R,\m)\]
and let 
\[\hat{Q}:=(Q_1\vee_{Q_G} Q_2)\oplus M_{f_\m}\] be as in the Spectral Theorem-Multiplication form. Let $U_1:(H_1,\G_{Q_1})\to (\hat{H},\hat{Q})$ be the $\K_{(H,\G_Q)}$-embedding acting on $H_{v^\prime}^\perp\vee H_{w^\prime}^\perp$ as in the AP, and acting on $H_{v^\prime}$ as in Lemma \ref{HvsimL2muv}. Define $U_2:(H_2,\G_{Q_2})\to (\hat{H},\hat{Q})$ in the same way. Then we have completed the conditions to show that $gatp_{(H_1,\G_{Q_1})}(v/G)=gatp_{(H_2,\G_{Q_2})}(w/G)$.
\edc
\epf

\bdf
A MAEC $\K$ is said to be \textit{homogeneous} if whenever $\MM$, $\NN\in\K$ and $(a_i)_{i<\a}\subseteq\MM$, $(b_i)_{i<\a}\subseteq\NN$ such that for all $n<\omega$ and $i_0,\dots,i_{n-1}<\a$
\[gatp_{\MM}(a_{i_0},\dots,a_{i_{n-1}}/\emptyset)=gatp_{\NN}(b_{i_0},\dots,b_{i_{n-1}}/\emptyset)\]
then we have that
\[gatp_{\MM}((a_i)_{i<\a}/\emptyset)=gatp_{\NN}((b_i)_{i<\a}/\emptyset),\]
\edf

\bth
$\K_{(H,\G_Q)}$ is an homogeneous MAEC.
\eth
\bpf
Let $(H_1,\G_{Q_1})$, $(H_2,\G_{Q_2})\in\K_{(H,\G_Q)}$ and $(v_i)_{i<\a}\subseteq H_1$, $(w_i)_{i<\a}\subseteq H_2$ be such that for all $n<\omega$ and $i_0,\dots,i_{n-1}<\a$
\[gatp_{(H_1,\G_{Q_1})}(v_{i_0},\dots,v_{i_{n-1}}/\emptyset)=gatp_{(H_2,\G_{Q_2})}(w_{i_0},\dots,w_{i_{n-1}}/\emptyset)\]
Without loss of generality, we can assume that for all $i<\a$ $v_i\in (H_1)_e$ and $w_i\in (H_2)_e$ and for every $i\neq j<\a$, $v_i\perp v_j$ and $w_i\perp w_j$.
For $i<\a$, let $\m_i:=\m_{v_i}=\m_{w_i}$, which is possible by Theorem \ref{typeoverempty}, since for all $i<\a$ $gatp_{(H_1,\G_{Q_1})}(v_i/\emptyset)=gatp_{(H_2,\G_{Q_2})}(w_i/\emptyset)$. Also, let
\[\hat{H}:=(H_1\vee_\emptyset H_2)\oplus \bigoplus_{i<\a}L^2(\R,\m_i)\]
and
\[\hat{Q}:=(Q_1\vee_\emptyset Q_2)\oplus\bigoplus_{i<\a	} M_{f_{\m_i}}\]
be as in the Spectral Theorem-Multiplication form. Let $U_1:(H_1,\G_{Q_1})\to (\hat{H},\hat{Q})$ be the $\K_{(H,\G_Q)}$-embedding acting on $H_{(v_i)_{i<\a}}^\perp\vee H_{(w_i)_{i<\a}}^\perp$ as in the AP, and acting on $H_{(v_i)_{i<\a}}$ as in Lemma \ref{HvsimL2muv}. Define $U_2:(H_2,\G_{Q_2})\to (\hat{H},\hat{Q})$ in the same way. Then we have completed the conditions to show that $gatp_{(H_1,\G_{Q_1})}((v_i)_{i<\a}/\emptyset)=gatp_{(H_2,\G_{Q_2})}((w_i)_{i<\a}/\emptyset)$.
\epf

\bth[Theorem 1.13 in \cite{HiHir1}]
Let $(\K,\prec_\K)$ a MAEC on a similaity type $\L$ satisfying JEP, AP and homgeneity. Let $\kappa>|\L|+LS(\mathcal{K})$, and then there is $\mathfrak{M}\in\K$ such that
\bdc
	\item[$\kappa$-universality] $\mathfrak{M}$ is $\kappa$-universal, that is for all $\MM\in\K$ such that $|\MM|<\kappa$, there is a $\K$ embedding $f:\MM\to\mathfrak{M}$.
	\item[$\kappa$-homogeneity] $\mathfrak{M}$ is $\kappa$-homogeneous, that is if  $(a_i)_{i<\a}$, $(b_i)_{i<\a}\subseteq\mathfrak{M}$ are such that for all $n<\omega$ and $i_0,\dots,i_{n-1}<\a$
\[gatp_{\mathfrak{M}}(a_{i_0},\dots,a_{i_{n-1}}/\emptyset)=gatp_{\mathfrak{M}}(b_{i_0},\dots,b_{i_{n-1}}/\emptyset)\]
then there is an automorphism $f$ of $\mathfrak{M}$ such that $f(a_i)=b_i$ for all $i<\a$.
\edc
\eth

\bdf
If in previous theorem, $\kappa$ is a cardinal greater than the density of any structure in $\K$ that we want to study, the structure $\mathfrak{M}$ is called a \textit{Monster Model}.
\edf

\brm
Let $\kappa$ be as above, and let $\mathbb{M}(\R)$ the set of all regular Borel meaures on $\R$ whoose support is disoint from $\s_p(Q)$. Then the structure $(\tilde{H}_\kappa,\tilde{Q}_\kappa)$ where
\[\tilde{H}=H_d\oplus\bigoplus_\kappa\bigl(\bigoplus_{\m\in\mathbb{M}}L^2(\R,\m)\bigr)\]
and 
\[\tilde{Q}=(Q\upharpoonright H_d)\oplus\bigoplus_\kappa\bigl(\bigoplus_{\m\in\mathbb{M}}M_{f_\m}\bigr)\]
works as a monster model for $\K_{(H,\G_Q)}$. This can be easily proven from the proofs of JEP, AP and homogeneity of $K_{(H,\G_Q)}$.
\erm

\section{definable and algebraic closures}\label{definablealgebraicclosures}
In this section we give a characterization of definable and algebraic closures. Definable closures are described in \ref{definclosure}, while algebraic closures are characterized in \ref{algebraicclosure}.

\bdf
Let $\mathcal{K}$ be a MAEC with JEP and AP. Let $\mathfrak{M}$ be the monster model in $\K$ and let $A\subseteq\mathfrak{M}$ be a small subset. Then,
\ben
	\item The \textit{definable closure} of $A$ is the set
\[dcl(A):=\{m\in\mathfrak{M}\ |\ Fm=m\text{ for all }F\text{ automorphism of }\mathfrak{M}\text{ that fixes }A\text{ pointwise}\}\]
	\item The \textit{algebraic closure} of $A$ is the set
\[acl(A):=\{m\in\mathfrak{M}\ |\ \text{the orbit under all the automorphisms of }\mathfrak{M}\text{ that fix }A\text{ pointwise}\text{ is compact}\}\]
\een
\edf

\begin{theo}\label{definclosure}
Let $G\subseteq \tilde{H}$. Then $dcl(G)=\tilde{H}_G$.
\end{theo}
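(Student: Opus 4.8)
The plan is to prove the two inclusions $\tilde H_G\subseteq dcl(G)$ and $dcl(G)\subseteq\tilde H_G$ separately, using throughout the description of automorphisms given by Lemma \ref{autounitary}: an automorphism $F$ of the monster model $(\tilde H,\tilde Q)$ is exactly a unitary operator on $\tilde H$ with $FD(\tilde Q)=D(\tilde Q)$ and $F\tilde Qv=\tilde QFv$ for all $v\in D(\tilde Q)$. The first thing I would record is that any such $F$ commutes with the entire bounded functional calculus: the computation $FQF^{*}=F\tilde QF^{-1}=\tilde Q$ together with uniqueness of the spectral projection valued measure (Fact \ref{Fact&Resolution&Of&Identity}) forces $FE_\Omega F^{*}=E_\Omega$ for every Borel $\Omega$, and hence $F\,\p(h)=\p(h)\,F$ for every bounded Borel $h$, where $\p$ is the calculus of $\tilde Q$ from Theorem \ref{Theorem&Spectral&Theorem&Functional&Calculus}.

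For $\tilde H_G\subseteq dcl(G)$, let $F$ be any automorphism fixing $G$ pointwise. By Definition \ref{DefinitionHv}, $\tilde H_G$ is the closed linear span of the vectors $\p(h)v$ with $v\in G$ and $h$ bounded Borel. For each such generator $F(\p(h)v)=\p(h)(Fv)=\p(h)v$, using the commutation just noted together with $Fv=v$. Since $F$ is linear and isometric, it fixes the closed span of these generators, i.e. it fixes $\tilde H_G$ pointwise; as $F$ was arbitrary, $\tilde H_G\subseteq dcl(G)$.

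For the reverse inclusion I would argue by contraposition: given $m\in\tilde H\setminus\tilde H_G$, I exhibit an automorphism fixing $G$ pointwise but moving $m$. Put $w:=P_{G^\perp}m$, nonzero since $m\notin\tilde H_G$, and let $\tilde H_w$ be the cyclic subspace generated by $w$ (the $H_v$ of Definition \ref{DefinitionHv} with $v=w$). Because $\tilde H_G$ is invariant under every self-adjoint $E_\Omega$ it is a reducing subspace for $\tilde Q$, so $\tilde H_G^\perp$ is invariant under the functional calculus; hence $w\in\tilde H_G^\perp$ gives $\tilde H_w\subseteq\tilde H_G^\perp$, and $\tilde H_w$ itself reduces $\tilde Q$. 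Consequently the operator
\[
F:=e^{i\theta}P_{\tilde H_w}+(I-P_{\tilde H_w}),\qquad \theta\in(0,2\pi),
\]
is unitary and commutes with $\tilde Q$, so by Lemma \ref{autounitary} it is an automorphism. Since $F$ is the identity off $\tilde H_w$ and $G\subseteq\tilde H_G\perp\tilde H_w$, it fixes $G$ pointwise; but $Fm=P_Gm+e^{i\theta}w$, whence $Fm-m=(e^{i\theta}-1)w\neq0$ and $m\notin dcl(G)$.

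The main obstacle, and the step deserving the most care, is this last construction: one must verify that the phase operator $F$ is a \emph{bona fide} automorphism. This rests on two spectral-theoretic points, namely that $\tilde H_w$ reduces $\tilde Q$ (so that $P_{\tilde H_w}$, and hence $F$, commutes with $\tilde Q$ in the domain sense required by Lemma \ref{autounitary}) and that $\tilde H_w\perp\tilde H_G$ (so that $F$ genuinely fixes $G$). Both follow from the reducing property of $\tilde H_G$ and the invariance of cyclic subspaces under the bounded functional calculus, so the argument is clean once these invariance facts are recorded.
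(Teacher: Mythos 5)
Your proof is correct, but for the hard inclusion it takes a genuinely different route from the paper. The inclusion $\tilde H_G\subseteq dcl(G)$ is argued the same way in both: automorphisms commute with the bounded functional calculus of $\tilde Q$, hence fix the generators $\p(h)v$, $v\in G$, and so fix $\tilde H_G$ pointwise (your justification via uniqueness is really the uniqueness clause of Theorem \ref{Theorem&Spectral&Theorem&Functional&Calculus}, not Fact \ref{Fact&Resolution&Of&Identity}, but that is a citation quibble). For $dcl(G)\subseteq\tilde H_G$ the paper does not build an automorphism at all: given $v\notin\tilde H_G$ it constructs, in a larger structure of the class, a second vector $w:=P_Gv+(1)_{\m_{P_{G^\perp}v_e}}$ with $gatp(w/G)=gatp(v/G)$ but $w\neq v$, invoking the measure-theoretic characterization of Galois types (Theorem \ref{typeoverA}) and then, implicitly, the homogeneity of the monster model to convert equality of types into an automorphism moving $v$. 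You instead construct the witnessing automorphism explicitly inside the monster: the phase rotation $F=e^{i\theta}P_{\tilde H_w}+(I-P_{\tilde H_w})$ on the cyclic subspace $\tilde H_w$ of $w=P_{G^\perp}v$, verified to be an automorphism via Lemma \ref{autounitary} because $\tilde H_w$ reduces $\tilde Q$ and is orthogonal to $\tilde H_G$ (both consequences of the $*$-algebra of bounded Borel functions leaving $\tilde H_G$ and its complement invariant). Your approach is more elementary and self-contained: it needs neither Theorem \ref{typeoverA} nor monster homogeneity, and it even sidesteps a small imprecision in the paper's witness, which uses only the essential part $\m_{P_{G^\perp}v_e}$ and so does not obviously match types when $P_{G^\perp}v$ has a nonzero discrete component. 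What the paper's approach buys in exchange is uniformity with the rest of its development, since the same type-characterization machinery is reused immediately afterwards for $acl$ and for stability; your argument, by contrast, is special to $dcl$ but would survive even in contexts where the type characterization or saturation arguments were unavailable.
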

\begin{proof}
\bdc
	\item[$dcl(G)\subseteq\tilde{H}_G$] Let $v\not\in \tilde{H}_G$. Then $P_{G^\perp}v\neq 0$. Let $(H^\prime,Q^\prime)\in\K_{(H,\G_Q)}$ be a small structure containing $v$. Let $(H^{\prime\prime},Q^{\prime\prime})\in\K_{(H,\G_Q)}$ be a structure containig  $H^\prime\oplus L^2(\R,\m_{P_{G^\perp}v_e})$. Let $w:=P_Gv+(1)_{\m_{P_{G^\perp}v_e}}\in H^{\prime\prime}$. Then $gatp(v/G)=gatp(w/G)$, but $v\neq w$. Therefore $v\not\in dcl(G)$.	\item[$\tilde{H}_G\subseteq dcl(G)$] Let $v\in G$, let $f$ be a bounded Borel function on $\R$, let $U\in Aut(\tilde{H},\tilde{Q}/G)$ and let $(H^\prime,Q^\prime)$ a small structure containg $G$. Then, by Lemma \ref{autounitary}, $Uf(Q^\prime)v=f(Q^\prime)Uv=f(Q^\prime)v$, and $v\in dcl(G)$.
\edc
\end{proof}

\begin{lemma}
Let $v\in\tilde{H}$. If $v$ is an eigenvector corresponding to some $\l\in\s_d(N)$ then $v$ is algebraic over $\emptyset$.
\end{lemma}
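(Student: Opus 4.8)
The plan is to exhibit a compact set containing the orbit of $v$ under the automorphism group of the monster model $(\tilde{H},\tilde{Q})$; membership of that orbit's closure in a compact set is exactly what $acl(\emptyset)$ requires. First I would record that, since $\l\in\s_d(Q)=\s(Q)\setminus\s_e(Q)$, we have $\l\not\in\s_e(Q)$, so Weyl's criterion (Theorem \ref{Weyl'sCriterion}) furnishes some $\e>0$ with $\dim(E_{(\l-\e,\l+\e)}\tilde{H})<\infty$. Because the eigenspace $V:=\{w\in\tilde{H}\ |\ \tilde{Q}w=\l w\}$ is the range of the spectral projection $\chi_{\{\l\}}(\tilde{Q})$, and $\{\l\}\subseteq(\l-\e,\l+\e)$, this range sits inside $E_{(\l-\e,\l+\e)}\tilde{H}$; hence $V$ is finite dimensional and $v\in V$.

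Next I would show that every automorphism fixes $V$ setwise. By Lemma \ref{autounitary}, each $U\in\Aut(\tilde{H},\tilde{Q})$ is a unitary operator satisfying $U\tilde{Q}w=\tilde{Q}Uw$ for $w\in D(\tilde{Q})$. Thus whenever $\tilde{Q}w=\l w$ we get $\tilde{Q}(Uw)=U\tilde{Q}w=\l Uw$, so $Uw\in V$. In particular $Uv\in V$ for every such $U$, and since $U$ is unitary, $\|Uv\|=\|v\|$.

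It follows that the orbit $\{Uv\ |\ U\in\Aut(\tilde{H},\tilde{Q})\}$ is contained in the sphere $\{w\in V\ |\ \|w\|=\|v\|\}$ of the finite-dimensional Hilbert space $V$. This sphere is compact in the norm topology, so the closure of the orbit is a closed subset of a compact set and is therefore compact. This is precisely the condition in the definition of $acl$, so $v$ is algebraic over $\emptyset$.

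The one point needing care is that the eigenspace computed inside the (large) monster model genuinely has the finite dimension $n_\l$ rather than being inflated by the multiplicity $\kappa$. This is guaranteed by the explicit description of the monster model: its discrete part is $H_d=\bigoplus_{\l\in\s_d(Q)}\C^{n_\l}$ and is \emph{not} tensored with $\kappa$, while the essential part is built from measures whose support is disjoint from $\s_p(Q)$ and so contributes no eigenvector at $\l\in\s_d(Q)\subseteq\s_p(Q)$. Hence $V\subseteq H_d$ and $\dim V=n_\l<\infty$ regardless of the ambient size of $\tilde{H}$, which is what makes the compactness argument go through.
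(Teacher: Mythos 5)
Your proof is correct and follows essentially the same route as the paper's: the paper likewise observes that $\l\in\s_d(\tilde{Q})$ forces the eigenspace $\tilde{H}_\l$ to be finite dimensional, that every automorphism maps $\tilde{H}_\l$ onto itself, and that the orbit of $v$ is therefore compact. You merely supply the details the paper leaves implicit (Weyl's criterion via Theorem \ref{Weyl'sCriterion}, unitarity of automorphisms via Lemma \ref{autounitary}, and the check that the monster model does not inflate $\dim\tilde{H}_\l$), which is a welcome tightening rather than a different argument.
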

\begin{proof}
$\l\in\s_d(N)$ if and only if $\l$ is isolated in $\s(N)$ with finite dimensional eigenspace $\tilde{H}_\l$. So any automorphism can only send $\tilde{H}_\l$ onto $\tilde{H}_\l$ and the orbit of $v$ under such automorphism can only be compact.
\end{proof}

\begin{lemma}\label{algebraicdiscretespectrum}
Let $v\in\tilde{H}$ be such that $v=\sum v_k$ where each $v_k$ is an eigenvector for some $\l_k\in\s_d(N)$. Then $v$ is algebraic over $\emptyset$.
\end{lemma}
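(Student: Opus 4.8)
The plan is to reduce the general case to the single-eigenvector case already handled in the previous lemma, and then to account for the fact that $v$ is now an infinite sum by invoking compactness of the closed ball together with the discreteness of the spectrum. First I would write $v=\sum_k v_k$ where each $v_k$ lies in the finite-dimensional eigenspace $\tilde{H}_{\l_k}$ attached to an isolated eigenvalue $\l_k\in\s_d(N)$. The key structural observation, following Lemma \ref{autounitary}, is that any $U\in\Aut(\tilde{H},\tilde{Q})$ commutes with $\tilde{Q}$ and hence with every spectral projection $E_\Omega$; in particular, for each isolated $\l_k$ the projection $E_{\{\l_k\}}$ onto $\tilde{H}_{\l_k}$ is $U$-invariant, so $U$ maps $\tilde{H}_{\l_k}$ onto itself. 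Consequently $U v=\sum_k U v_k$ with $U v_k\in\tilde{H}_{\l_k}$ for every $k$, and moreover $\|Uv_k\|=\|v_k\|$ because $U$ is unitary.

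Next I would identify the orbit of $v$. Since each $\tilde{H}_{\l_k}$ is finite-dimensional and $U$ acts as a unitary on it, the $k$-th component $Uv_k$ of any image $Uv$ ranges over (a subset of) the sphere of radius $\|v_k\|$ in $\tilde{H}_{\l_k}$, which is compact. Thus the orbit $\{Uv\mid U\in\Aut(\tilde{H},\tilde{Q})\}$ is contained in the product set $\prod_k S_k$, where $S_k$ is the closed ball of radius $\|v_k\|$ in $\tilde{H}_{\l_k}$. Because $\sum_k\|v_k\|^2=\|v\|^2<\infty$, this product of finite-dimensional balls is a norm-compact subset of $\tilde{H}$: given any $\e>0$ only finitely many $\|v_k\|$ exceed $\e$, so the tail is uniformly small and the set is totally bounded, hence (being closed in a complete space) compact. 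The orbit of $v$ is a closed subset of this compact set, so it is itself compact, and therefore $v\in\acl(\emptyset)$.

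The main obstacle is the passage from the finite to the infinite sum: for a single eigenvector the orbit lives in one compact finite-dimensional sphere, but here I must control infinitely many components simultaneously and verify genuine norm-compactness rather than mere componentwise compactness. The essential point making this work is that the $\ell^2$ condition $\sum_k\|v_k\|^2<\infty$ forces the tail contributions to vanish uniformly, which is exactly the total-boundedness needed to upgrade the componentwise (coordinatewise) control to compactness in the Hilbert-space norm. I would take care to confirm that the orbit is closed — this follows because a limit of images $U_n v$ has each component lying in the corresponding invariant eigenspace, and the diagonal/limiting construction producing the limiting unitary respects this invariance — so that the orbit, being a closed subset of a compact set, is compact as required by the definition of $\acl(\emptyset)$.
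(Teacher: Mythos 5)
Your proposal is correct and takes essentially the same route as the paper, whose one-line proof observes that the orbit is a ``Hilbert cube'' which is compact: your invariance of each finite-dimensional eigenspace under automorphisms plus the $\ell^2$-tail condition (equivalently $\|v_k\|\to 0$) making the product of balls norm-compact is precisely that argument, spelled out. Your additional care about closedness of the orbit (realizing a limit $U_n v\to w$ by an automorphism built eigenspace-by-eigenspace and extended by the identity) fills in a detail the paper glosses over.
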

\begin{proof}
Given that $\|v_k\|\to 0$ when $k\to\infty$, the orbit of $v$ under all the automorphisms is a Hilbert cube which is compact.
\end{proof}

\bth\label{algebraicclosure}
$acl(\emptyset)=H_d$
\eth
\bpf
$acl(\emptyset)\subseteq H_d$ is a consecuence of Lemma \ref{algebraicdiscretespectrum}. For the converse, suppose $v\in\tilde{H}$ such that $v_e\neq 0$. Let $\kappa$ be an uncountable small cardinal and let $G:=\bigoplus_\kappa L^2(\R,\m_{v_e})$. Any structure in $\K_{(H,\G_Q)}$ containing $G$ will have $\kappa$ different realizations of $gatp(v/\emptyset)$. Therefore $v\not\in acl(\emptyset)$.
\epf

\begin{theo}\label{algebraicclosure}
Let $G\subseteq\tilde{H}$. Then $acl(G)$ is closed Hilbert subspace generated by the union of $dcl(G)$ with $acl(\emptyset)$. 
\end{theo}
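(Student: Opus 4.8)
The plan is to prove $acl(G)=W$, where $W$ denotes the closed Hilbert subspace generated by $dcl(G)\cup acl(\emptyset)$; by Theorem \ref{definclosure} and the computation $acl(\emptyset)=H_d$ this is the same space as $\overline{\tilde H_G+H_d}$. First I would record that $acl(G)$ is a closed linear subspace of $\tilde H$; here it is convenient to use that, the ambient space being complete, an orbit lies in a compact set iff it is totally bounded. Every automorphism fixing $G$ pointwise is, by Lemma \ref{autounitary}, a unitary commuting with $\tilde Q$, hence an isometry. Consequently, if $u,v\in acl(G)$ then the orbit of $\a u+\b v$ under $\Aut(\tilde H,\tilde Q/G)$ is the image of a subset of the (totally bounded) product of the orbits of $u$ and $v$ under the uniformly continuous map $(x,y)\mapsto \a x+\b y$, so it is totally bounded; thus $acl(G)$ is closed under linear combinations. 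Topological closedness is similar: if $v_n\to v$ with each orbit totally bounded, then given $\e>0$ choose $n$ with $\|v-v_n\|<\e/2$, cover the orbit of $v_n$ by finitely many $\e/2$-balls, and use the isometry property to transport this into a finite $\e$-cover of the orbit of $v$.

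With this in hand the inclusion $W\subseteq acl(G)$ is immediate: points of $dcl(G)=\tilde H_G$ have singleton orbits, so $dcl(G)\subseteq acl(G)$, and $\Aut(\tilde H,\tilde Q/G)\subseteq\Aut(\tilde H,\tilde Q/\emptyset)$ forces every $G$-orbit to sit inside the corresponding $\emptyset$-orbit, so $acl(\emptyset)=H_d\subseteq acl(G)$; since $acl(G)$ is a closed subspace it then contains the closed span $W$ of these two sets.

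The content is in the reverse inclusion. Given $v\notin W$, put $v':=P_{W^\perp}v\neq 0$ and note that $v'\perp\tilde H_G$ and $v'\perp H_d$, so $v'$ lies in the essential part and is orthogonal to $dcl(G)$. I would now produce $\k$ many automorphisms fixing $G$ whose values on $v$ are pairwise far apart, mimicking the argument that gave $acl(\emptyset)=H_d$. Since $\tilde H_G$ and $H_d$ are reducing subspaces for $\tilde Q$, so is $W$, and $\tilde Q$ restricts to $W^\perp$; by the construction of the monster the essential part of $W^\perp$ contains $\k$ pairwise orthogonal copies of $L^2(\R,\mu_{v'})$, since the small subspace $\tilde H_G$ together with $H_d$ cannot exhaust them. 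Using the characterization of Galois types over $G$ in Theorem \ref{typeoverA}—namely that $gatp(\cdot/G)$ is determined by the projection onto $\tilde H_G$ together with the spectral measure of the $G^\perp$-part—together with the homogeneity of the monster, I obtain unitaries $U_\a$ ($\a<\k$) commuting with $\tilde Q$, equal to the identity on $W$ (hence fixing $G$), and carrying $v'$ into these distinct orthogonal copies, so that $v'_\a:=U_\a v'$ satisfies $\mu_{v'_\a}=\mu_{v'}$ and $v'_\a\perp v'_\b$ for $\a\neq\b$. Because each $U_\a$ fixes $W\ni v-v'$, we get $U_\a v=(v-v')+v'_\a$, whence $\|U_\a v-U_\b v\|=\|v'_\a-v'_\b\|=\sqrt2\,\|v'\|$ for $\a\neq\b$. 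Thus the orbit of $v$ contains an uncountable $\sqrt2\|v'\|$-separated set, so it is not totally bounded and cannot be compact; therefore $v\notin acl(G)$, giving $acl(G)\subseteq W$.

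I expect the main obstacle to be the construction in the third paragraph: verifying that the rotations of $v'$ are realized by genuine automorphisms of $(\tilde H,\tilde Q)$ fixing $G$ pointwise. This rests on three points—that $W^\perp$ is a reducing subspace (so that a unitary intertwining $\tilde Q|_{W^\perp}$ with itself and acting as the identity on $W$ automatically commutes with $\tilde Q$, hence is an isomorphism by Lemma \ref{autounitary}); that the monster genuinely supplies $\k$ orthogonal copies of $L^2(\R,\mu_{v'})$ disjoint from the small space $\tilde H_G$; and that Theorem \ref{typeoverA} together with homogeneity upgrades the coincidence of spectral data into an actual $G$-fixing automorphism. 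The remaining ingredients—linearity and closedness of $acl(G)$ and the two easy inclusions $dcl(G),acl(\emptyset)\subseteq acl(G)$—are routine.
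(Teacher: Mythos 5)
Your proposal is correct and follows essentially the same route as the paper: both establish $dcl(G),acl(\emptyset)\subseteq acl(G)$ for the easy inclusion, and for the converse both take the nonzero component $P_{W^\perp}v$ and use $\kappa$ pairwise orthogonal copies of $L^2(\R,\m_{v'})$ in the monster to produce $\kappa$ far-apart realizations of $gatp(v/G)$, hence a non-compact orbit. The only difference is one of care, not of method: you explicitly verify that $acl(G)$ is a closed linear subspace and construct the $G$-fixing automorphisms via Lemma \ref{autounitary} and Theorem \ref{typeoverA}, steps the paper's terser proof leaves implicit.
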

\begin{proof}
Let $E$ be the space $acl(\emptyset)+dcl(G)$. We have that $acl(\emptyset)\subseteq acl(G)$ and $dcl(G)\subseteq acl(G)$ so $E\subseteq acl(G)$. If $v\not\in E$, then $P_E^\perp v\neq 0$. Let $\kappa$ be an uncountable small cardinal and let $G:=\bigoplus_\kappa L^2(\R,\m_{(P_E^\perp v)_e})$. Any structure in $\K_{(H,\G_Q)}$ containing $G$ will have $\kappa$ different realizations of $gatp(v/G)$. Therefore,  $v\not\in acl(A)$.
\end{proof}

\section{perturbations}\label{perturbations}
In this section, perturbations of a structure $(H,\G_Q)\in\K_{(H,\G_Q)}$ are defined. Main results here are Theorem \ref{PerturbationProperty} and Theorem \ref{MAECwithPerturbations} that state that $\K_{(H,\G_Q)}$ has the perturbation property and is a MAEC with perturbations respectively.

\bdf
Let $\K$ be a MAEC that satisfies the JEP, AP and homogeneity. Let $\mathfrak{M}$ be its monster model. Then $\K$ is said to have the \textit{perturbation property} if whenever $A\subseteq\mathfrak{M}$ and $(b_i)_{i<\omega}$ is a convergent sequence with limit $b=\lim_{n\to\infty}b_i$ such that $gatp(b_i/A)=gatp(b_j/A)$ for all $i$, $j<\omega$, then $gatp(b/A)=gatp(b_i/A)$ for all $i<\omega$.
\edf

\bth\label{PerturbationProperty}
$\K_{(H,\G_Q)}$ has the perturbation property.
\eth
\bpf
Let $G\subseteq\tilde{H}$ be small and $(v_i)_{i<\omega}\subseteq\tilde{H}$ a sequence such that $\lim_{i\to\infty}v_i=v$ and $gatp(v_i/G)=gatp(v_j/G)$ for all $i$, $j<\omega$. Then by Theorem \ref{typeoverA}, $P_Gv_i=P_Gv_j$ and $gatp(P_{G^\perp}v_i/\emptyset)=gatp(P_{G^\perp}v_j/\emptyset)$ for all $i$, $j<\omega$. If $\lim_{i\to\infty}v_i=v$, it is clear that $P_Gv_i=P_Gv$ for all $i<\omega$. So it is enough to prove the theorem for the case $G=\emptyset$.

Suppose $\lim_{i\to\infty}v_i=v$ and $gatp(v_i/\emptyset)=gatp(v_j/\emptyset)$ for all $i$, $j<\omega$. By Theorem \ref{typeoverempty}, this means that $\m_i=\m_j$ for all $i$, $j<\omega$. Let $\m:=\m_i$ and $E\subseteq\R$ be a Borel set. Then $\langle \chi_E(Q)v\ |\ v\rangle=\langle \chi_E(Q)(\lim_{i\to\infty}v_i\ |\ \lim_{i\to\infty}v_i\rangle=\lim_{i\to\infty}\langle \chi_E(Q)v_i\ |\ v_i\rangle=\lim_{i\to\infty}\m_i(E)=\lim_{i\to\infty}\m(E)=\m(E)$. Again by Theorem \ref{typeoverempty}, $gatp(v_i/\emptyset)=gatp(v/\emptyset)$ for all $i<\omega$.
\epf

\bdf
Let $(\K,\prec_\K)$ be a MAEC. A class $(\mathbb{F}_e)_{e\geq 0}$ collections of bijective mappings between members of $\K$ is said to be a \textit{system of perturbations} for $(\K,\prec_\K)$ if
	\ben
		\item The $\mathbb{F}_\e$ are collections of bijective mappings between members of $\K$ such that
		\item $\mathbb{F}_\delta\subseteq \mathbb{F}_\e$ if $\delta<\e$, $\mathbb{F}_0=\bigcup_{e>0}\mathbb{F}_\e$ and $\mathbb{F}_0$ is exactly the collection of real isomorphisms of structures in $\K$.
		\item If $f:\MM\to\NN$ is in $\mathbb{F}_\e$, then $f$ is a $e^\e$-bi lipschitz mapping with respect to the metric i.e. $e^{-\e}d(x,y)\leq d(f(x),f(y))\leq e^\e d(x,y)$ for all $x$, $y\in M$.
		\item If $f\in \mathbb{F}_\e$ then $f^{-1}\in \mathbb{F}_\e$.
		\item If $f\in \mathbb{F}_\e$, $g\in\mathbb{F}_\delta$, and $dom(g)=rng(f)$ then $g\circ f\in\mathbb{F}_{\e+\delta}$.
		\item If $(f_i)_{i<\a}$ is an increasing chain of $\e$-isomorphisms, i.e. $f_i\in\mathbb{F}_\e$, $f_i\MM_i\to\NN_i$, $\MM_i\prec_\K\MM_{i+1}$, $\NN_i\prec_\K\NN_{i+1}$ and $f_i\subseteq f_{i+1}$ for every $i<\a$, then there is an $\e$-isomorphism $f:\overline{\bigcup_{i<\a}}\MM_i\to\overline{\bigcup_{i<\a}}\NN_i$ such that $f\upharpoonright \MM_i=f_i$ for all $i<\omega$.
	\een			
If $(\mathbb{F}_e)_{e\geq 0}$ is a system of perturbations for $(\K,\prec_\K)$, then $(\K,\prec_\K,(\mathbb{F}_e)_{e\geq 0})$ is called a \textit{MAEC with perturbations}.
\edf

\bdf
Let $\e>0$. An $\e$-\textit{perturbation} in $\K_{(H,\G_Q)}$ is an unitary operator $U:H_1\to H_2$ such that there are closed unbounded selfadoint operators $Q_1$ and $Q_2$ defined on $H_1$ and $H_2$ respectively, such that 
	\ben
		\item $(H_1,\G_{Q_1})$, $(H_2,\G_{Q_2})\in\K_{(H,\G_Q)}$
		\item $UD(Q_1)=D(Q_1)$
		\item The operator $Q_1-U^{-1}Q_2U$ can be extended to a bounded operator on $H_1$ with norm less than $\e$
		\item The operator $Q_2-UQ_1U^{-1}$ can be extended to a bounded operator on $H_2$ with norm less than $\e$
	\een
The class of all $\e$-perturbations in $\K_{(H,\G_Q)}$ is denoted by $\mathbb{F}^{(H,\G_Q)}_e$
\edf

\bth\label{MAECwithPerturbations}
$(\K_{(H,\G_Q)},\prec_{\K_{(H,\G_Q)}},(\mathbb{F}^{(H,\G_Q)}_\e)_{\e\geq 0})$ is a MAEC with perturbations.
\eth
\bpf
Items (1), (2), (3) and (4) are clear. (5) Comes from triangle inequality. Finally, For (6), recall from the Tarsky chain condition in Theorem \ref{Theorem&KHQ&Is&MAEC} that $\overline{\bigcup_{i<\kappa}}(H_i,\G_{Q_i})=H_0\bigoplus_{i<\kappa}(H^\prime_i,Q^\prime_i)$. This with the fact that a direct sum of $\kappa$ bounded operators with norm less than $\e$ is still a bounded operator with norm less than $\e$.		
\epf

\section{stability}\label{stability}
Here, we prove superstability of the MAEC $\K_{(H,\G_Q)}$ by counting types over sets and show that it is $\aleph_0$-stable up to perturbations. This are the statements of Theorem \ref{superstability} and Theorem \ref{StableUpToPerturbations} respectively.

\begin{theo}\label{HvinHw}
Let $v,w\in\tilde{H}$. Then $\tilde{H}_v$ is isometrically isomorphic to a Hilbert subspace of $\tilde{H}_w$ if and only if $\m_v <<\m_w$.
\end{theo}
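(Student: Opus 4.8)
The plan is to reduce everything to the multiplication model supplied by the spectral theorem. By Lemma \ref{HvsimL2muv} I identify $\tilde H_v \simeq L^2(\R,\m_v)$ and $\tilde H_w \simeq L^2(\R,\m_w)$ in such a way that, under these identifications, $Q$ acts on each side as multiplication by the identity function $\iota(x)=x$, the cyclic vectors $v$ and $w$ correspond to the constant function $\mathbf 1$, and $f(Q)v$ corresponds to $f\in L^2(\R,\m_v)$ for bounded Borel $f$. With this dictionary in place, and invoking the characterization of isomorphisms in Lemma \ref{autounitary}, an isometric isomorphism of $\tilde H_v$ onto a closed (necessarily $Q$-invariant) Hilbert subspace of $\tilde H_w$ is exactly an isometry $T:L^2(\R,\m_v)\to L^2(\R,\m_w)$ that intertwines multiplication by $\iota$, i.e. $TM_\iota=M_\iota T$ on the relevant domain. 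So the whole statement becomes: such a $T$ exists if and only if $\m_v\ll\m_w$.

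For the direction ($\Leftarrow$) I would assume $\m_v\ll\m_w$ and let $g:=\tfrac{d\m_v}{d\m_w}$ be the Radon--Nikodym derivative, a nonnegative element of $L^1(\m_w)$. Defining $Tf:=f\sqrt{g}$ gives
\[
\|Tf\|^2_{L^2(\m_w)}=\int|f|^2 g\,d\m_w=\int|f|^2\,d\m_v=\|f\|^2_{L^2(\m_v)},
\]
so $T$ is an isometry, and since multiplication by $\iota$ commutes with multiplication by $\sqrt g$, we have $TM_\iota=M_\iota T$. Thus $T$ is the desired operator-preserving isometric embedding.

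For the direction ($\Rightarrow$) I would assume $T$ is an isometry with $TM_\iota=M_\iota T$ and set $h:=T\mathbf 1\in L^2(\R,\m_w)$. The key move is to upgrade the intertwining of $M_\iota$ to an intertwining of $M_\phi$ for every bounded Borel $\phi$: intertwining of $M_\iota$ gives intertwining of $M_p$ for every polynomial $p$, and then the strong continuity of the functional calculus (property (4) of Theorem \ref{Theorem&Spectral&Theorem&Functional&Calculus}) lets me pass bounded pointwise limits through $T$ and conclude that $T$ commutes with the spectral projections of $M_\iota$, hence with every bounded Borel $M_\phi$. Consequently $T\phi=T(M_\phi\mathbf 1)=M_\phi T\mathbf 1=\phi\,h$ for all such $\phi$. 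Testing the isometry condition on indicators $\phi=\chi_E$ then yields
\[
\m_v(E)=\|\chi_E\|^2_{L^2(\m_v)}=\|T\chi_E\|^2_{L^2(\m_w)}=\int_E|h|^2\,d\m_w
\]
for every Borel $E$, so $\m_v$ has density $|h|^2$ relative to $\m_w$; in particular $\m_w(E)=0$ forces $\m_v(E)=0$, i.e. $\m_v\ll\m_w$.

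The main obstacle is precisely this functional-calculus step in ($\Rightarrow$): passing from ``$T$ intertwines $M_\iota$'' to ``$T$ is a multiplication operator'' (equivalently, $T$ intertwines $M_\phi$ for all bounded Borel $\phi$). The polynomial case is immediate, but the extension to bounded Borel functions must be justified carefully, controlling the convergence so that it remains compatible with the isometry $T$; I expect to handle this either by the standard fact that a bounded operator intertwining a self-adjoint operator commutes with its spectral measure, or by approximating $\phi$ boundedly and pointwise and invoking strong continuity together with dominated convergence. Once $T$ is known to be multiplication by $h$, the remaining Radon--Nikodym computation is routine.
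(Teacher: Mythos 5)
Your proof is correct and takes essentially the same approach as the paper's (far terser) proof: the Radon--Nikodym derivative supplies the isometry $f\mapsto f\sqrt{g}$ in one direction, and in the other the image of $v$ is represented by a function $h\in L^2(\R,\m_w)$, whence $d\m_v=|h|^2\,d\m_w$ and $\m_v\ll\m_w$. One small refinement: for the intertwining upgrade in the forward direction, use the resolvent identity $T(Q-z)^{-1}=(Q-z)^{-1}T$ for nonreal $z$ (the ``standard fact'' you mention) rather than the polynomial route, since polynomials in an unbounded operator are unbounded and bounded Borel functions on $\R$ are not bounded pointwise limits of polynomials.
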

\begin{proof}
By Radon Nikodim Theorem, if $\m_u <<\m_v$ then $\tilde{H}_v$ is isometrically equivalent to a Hilbert subspace of $\tilde{H}_w$. For the converse, if $\tilde{H}_v$ is isometrically equivalent to a Hilbert subspace of $\tilde{H}_w$, then $v$ can be represented in $L^2(\R,\m_w)$ by some function, and therefore, $\m_u <<\m_v$.
\end{proof}

\brm
Recall that if $G\subseteq\tilde{H}$ is small, $S(G)$ denotes the set of (1) Galois types over $G$.
\erm

\begin{theo}\label{distanceoftypesoverempty1}
Let $p,q\in S(\emptyset)$ and let $v,w\in\tilde{\H}$ such that $v\models p$ and $w\models q$, and $\m_v <<\m_w$. Then, $d(p,q)=\|\m_w-\m_v\|$
\end{theo}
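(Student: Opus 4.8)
The plan is to compute the metric $d(p,q)$ on the type space $S(\emptyset)$ directly from its definition as an infimum of distances between realizations, and to show this infimum equals $\|\m_w - \m_v\|$, the total variation norm of the difference of the spectral measures. First I would recall that for types $p,q$ over the empty set, the distance $d(p,q)$ is defined as the infimum of $\|v' - w'\|$ over all pairs $(v',w')$ in a common structure of $\K_{(H,\G_Q)}$ with $v' \models p$ and $w' \models q$; by homogeneity and the amalgamation property this infimum can be taken inside the monster model $\tilde{H}$. Since $\m_v \ll \m_w$, Theorem \ref{HvinHw} gives an isometric embedding of $\tilde H_v$ into $\tilde H_w$, so I may realize both $p$ and $q$ inside a single space $\tilde H_w \simeq L^2(\R,\m_w)$, representing $v$ by a function $g \in L^2(\R,\m_w)$ with $|g|^2 = d\m_v/d\m_w$ (the Radon-Nikodym density) and $w$ by the constant function $1$.

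Next I would reduce the computation of $\inf \|v' - w'\|^2$ to an optimization over such representing functions. In $L^2(\R,\m_w)$ the candidate realizations of $p$ are the functions $g$ with $|g|^2 = d\m_v/d\m_w$ $\m_w$-a.e., and the realizations of $q$ are functions $h$ with $|h|^2 = 1$; the spectral data only constrain the \emph{moduli} of the representing functions, leaving their phases free. Thus
\[
d(p,q)^2 = \inf \int_{\R} |g(x) - h(x)|^2 \, d\m_w(x),
\]
where the infimum ranges over measurable phase choices with $|g| = \sqrt{d\m_v/d\m_w}$ and $|h| = 1$. Expanding the integrand pointwise as $|g|^2 + |h|^2 - 2\,\mathrm{Re}(g\bar h)$ and optimizing the phase so that $g\bar h \ge 0$ pointwise (aligning the two functions), the cross term becomes $2|g||h| = 2\sqrt{d\m_v/d\m_w}$, and the integral reduces to
\[
\int_{\R} \left( \sqrt{\tfrac{d\m_v}{d\m_w}} - 1 \right)^2 d\m_w .
\]

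Finally I would identify this quantity with $\|\m_w - \m_v\|$. Here the key step is to interpret the norm $\|\m_w - \m_v\|$ correctly: expanding the square gives $\int (d\m_v/d\m_w)\,d\m_w - 2\int \sqrt{d\m_v/d\m_w}\,d\m_w + \int d\m_w = \m_v(\R) - 2\int\sqrt{d\m_v/d\m_w}\,d\m_w + \m_w(\R)$, which is the squared Hellinger-type distance between $\m_v$ and $\m_w$. I expect the main obstacle to be reconciling this expression with whatever precise meaning the paper attaches to $\|\m_w - \m_v\|$: if it denotes total variation one must verify the identity $\|\m_w-\m_v\| = \int(\sqrt{d\m_v/d\m_w}-1)^2 d\m_w$ holds in the relevant normalization, whereas if it is the natural $L^2(\m_w)$-norm of $\sqrt{d\m_v/d\m_w}-1$ the identification is immediate. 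The careful justification that phases may be chosen freely and measurably to achieve the pointwise optimum, together with the verification that the resulting infimum is genuinely attained in the monster model and not merely approached, is where the argument needs the most attention; the rest is the pointwise algebra above.
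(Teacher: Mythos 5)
Your computation follows the same route as the paper's own proof---use Theorem \ref{HvinHw} and the Radon--Nikodym derivative $f = d\m_v/d\m_w$ to realize both types inside $\tilde{H}_w \simeq L^2(\R,\m_w)$, with $q$ realized by the constant function $1$---but you carry out in detail the optimization that the paper skips entirely, and the discrepancy you flag at the end is a real defect, not a loose end in your own argument. Two small repairs to your setup: a realization $v'$ of $p$ in an ambient model need not lie in $\tilde{H}_w$, so you should decompose $v' = P_{\tilde{H}_w}v' + v'_\perp$, represent the projection by $g\in L^2(\R,\m_w)$ with $|g|^2 \le f$ (equality holds only when $v'_\perp=0$), and compute $\|v'-w'\|^2 = \m_v(\R)+\m_w(\R) - 2\,\mathrm{Re}\int g\,d\m_w$; maximizing $\mathrm{Re}\int g\,d\m_w$ subject to $|g|^2\le f$ forces $g=\sqrt{f}$, so the optimum is attained by a vector lying entirely in $\tilde{H}_w$, no measurable-phase-selection issue arises (take the nonnegative square root), and the infimum is genuinely realized, yielding exactly your Hellinger quantity
\[
d(p,q)^2 = \int_\R \bigl(\sqrt{f} - 1\bigr)^2\, d\m_w.
\]

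The genuine issue is the final identification, and you were right to isolate it as the main obstacle. This quantity is not the total variation norm: $\|\m_w-\m_v\| = \int_\R |1-f|\,d\m_w = \int_\R |1-\sqrt{f}\,|\,(1+\sqrt{f}\,)\,d\m_w$, and for instance with $\m_w$ a probability measure and $\m_v = t^2\m_w$, $0<t<1$, one gets $d(p,q) = 1-t$ (attained at $v'=t\,w'$) while $\|\m_w-\m_v\| = 1-t^2$. So the theorem as literally stated, with $\|\cdot\|$ read as total variation, is false, and the paper's own one-line proof---which observes $d|\m_w-\m_v| = |1-f|\,d\m_w$ and then concludes $d(p,q)=\|\m_w-\m_v\|$---is a non sequitur that jumps over precisely the reconciliation you identified. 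The correct statement emerging from your computation is $d(p,q) = \bigl\|\sqrt{d\m_v/d\m_w}-1\bigr\|_{L^2(\m_w)}$, i.e.\ the Hellinger distance between the spectral measures, and the identification with the paper's formula is legitimate only under that reading of $\|\m_w-\m_v\|$. In short: your argument, completed as above, is sound and in fact more careful than the paper's; the gap you declined to paper over lies in the source, not in your proposal.
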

\begin{proof}
If $\m_u <<\m_v$, by Theorem \ref{HvinHw}, there exist $v^\prime\models tp(v/\emptyset)$ such that $\tilde{H}_{v^\prime}\leq\tilde{H}_w$ and there exists $f\in L^1(\s(N),\m_w)$ such that $d\m_v=fd\m_w$. Then $d|\m_w-\m_v|=|1-f|d\m_w$ and therefore $d(p,q)=\|\m_w-\m_v\|$.
\end{proof}

\begin{theo}\label{distanceoftypesoverempty2}
Let $p,q\in S(\emptyset)$ and let $v,w\in\tilde{\tilde{H}}$ be such that $v\models p$ and $w\models q$, and $\m_v \perp\m_w$. Then, $d(p,q)=\sqrt{\|\m_v\|^2+\|\m_w\|^2}$
\end{theo}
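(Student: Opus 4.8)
The plan is to read the type distance straight off its definition as an infimum of vector distances over realizations, and then to observe that in the mutually singular case this infimum is attained by \emph{every} admissible pair, so no genuine minimization is needed (in sharp contrast with the absolutely continuous situation of Theorem \ref{distanceoftypesoverempty1}). In the monster model $(\tilde H,\tilde Q)$ one has $d(p,q)=\inf\{\|v'-w'\|\ :\ v'\models p,\ w'\models q\}$. By the type characterization in Theorem \ref{typeoverempty}, a vector $v'$ realizes $p$ precisely when $\m_{v'}=\m_v$, and $w'\models q$ precisely when $\m_{w'}=\m_w$; hence every admissible pair satisfies $\m_{v'}\perp\m_{w'}$, and the whole task reduces to understanding what mutual singularity of spectral measures forces on the vectors themselves.

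The key step, which I expect to be the main obstacle, is to convert mutual singularity of the spectral measures into orthogonality of the realizing vectors. First I would use the definition of $\m_{v'}\perp\m_{w'}$ to fix a single Borel set $A\subseteq\R$ with $\m_{v'}(\R\setminus A)=0$ and $\m_{w'}(A)=0$. Writing $E_A=\chi_A(Q)$ for the spectral projection furnished by the functional calculus (Theorem \ref{Theorem&Spectral&Theorem&Functional&Calculus}) and using the resolution-of-identity properties of Fact \ref{Fact&Resolution&Of&Identity}, I would compute
\[\|v'-E_Av'\|^2=\langle E_{\R\setminus A}v'\,|\,v'\rangle=\m_{v'}(\R\setminus A)=0\qquad\text{and}\qquad \|E_Aw'\|^2=\langle E_Aw'\,|\,w'\rangle=\m_{w'}(A)=0,\]
so that $E_Av'=v'$ and $E_Aw'=0$. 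Since $E_A$ is self-adjoint by Fact \ref{Fact&Resolution&Of&Identity}(1), it then follows that
\[\langle v'\,|\,w'\rangle=\langle E_Av'\,|\,w'\rangle=\langle v'\,|\,E_Aw'\rangle=0,\]
that is, $v'\perp w'$. The only delicate point is that mutual singularity supplies one Borel set $A$ carrying all of $\m_{v'}$ while its complement carries all of $\m_{w'}$; once that set is fixed, the rest is a routine manipulation of the spectral projections.

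Finally I would conclude with the Pythagorean identity. Because $v'\perp w'$ for \emph{every} pair of realizations,
\[\|v'-w'\|^2=\|v'\|^2+\|w'\|^2=\m_{v'}(\R)+\m_{w'}(\R)=\m_v(\R)+\m_w(\R)=\|\m_v\|^2+\|\m_w\|^2,\]
where I use that $\m_u(\R)=\langle E_{(-\infty,\infty)}u\,|\,u\rangle=\|u\|^2$ by Fact \ref{Fact&Resolution&Of&Identity}(2), and record the total mass as $\m_v(\R)=\|\m_v\|^2$ in the notation of the statement. Since this value does not depend on the chosen realizations, the infimum defining $d(p,q)$ is attained and equals it, giving $d(p,q)=\sqrt{\|\m_v\|^2+\|\m_w\|^2}$, as claimed.
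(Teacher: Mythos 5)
Your proof is correct, and it is in fact tighter than the paper's own argument, which takes a different route. The paper invokes Theorem \ref{HvinHw}: since $\m_v\perp\m_w$, neither $\tilde{H}_v$ embeds isometrically into $\tilde{H}_w$ nor conversely, and it then says ``we can assume $\tilde{H}_v\perp\tilde{H}_w$'' and computes $\|v-w\|$ for one orthogonal pair of realizations. Strictly speaking, that choice of a convenient pair only yields the upper bound $d(p,q)\leq\sqrt{\|v\|^2+\|w\|^2}$; the matching lower bound --- that \emph{no} pair of realizations can be closer --- is left implicit. Your argument supplies exactly this missing half: by fixing a separating Borel set $A$ and using the projection $E_A=\chi_A(\tilde{Q})$ (you should write $\tilde{Q}$ rather than $Q$, since the computation happens in the monster model) to show $E_Av'=v'$ and $E_Aw'=0$, you prove that \emph{every} pair of realizations is orthogonal, so the infimum defining $d(p,q)$ is constant and the identity follows without any appeal to Theorem \ref{HvinHw} or to the Radon--Nikodym machinery behind it. What the paper's route buys is economy --- it reuses the embeddability characterization already established for Theorem \ref{distanceoftypesoverempty1} --- while your route is more elementary and actually complete. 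One point worth flagging in both treatments: the formula $\sqrt{\|\m_v\|^2+\|\m_w\|^2}$ only matches $\sqrt{\|v\|^2+\|w\|^2}$ under the convention $\|\m_v\|^2=\m_v(\R)$, i.e.\ $\|\m_v\|$ is the square root of the total mass rather than the total variation norm; you adopt this reading explicitly, which is consistent with the paper's own chain of equalities, but it deserves the remark you give it, since under the standard total variation convention the right-hand side would instead be $\sqrt{\|v\|^4+\|w\|^4}$.
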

\begin{proof}
If $\m_v \perp\m_w$, by Theorem \ref{HvinHw}, neither $\tilde{H}_v$ is not isometrically isomorphic to a Hilbert subspace of $\tilde{H}_w$ nor $\tilde{H}_w$ is isometrically isomorphic to a Hilbert subspace of $\tilde{H}_v$. Then we can assume $\tilde{H}_v\perp\tilde{H}_w$ and therefore, $d(p,q)=\|v-w\|=\sqrt{\|v\|^2+\|w\|^2}=\sqrt{\|\m_v\|^2+\|\m_w\|^2}$.
\end{proof}

\begin{theo}\label{distanceoftypesoverempty3}
Let $p,q\in S(\emptyset)$ and let $v,w\in\tilde{\H}$ be such that $v\models p$ and $w\models q$, and $\m_w=\m_w^\parallel+\m_w^\perp$ according to Lebesgue decomposition theorem. Then, $d(p,q)=\sqrt{\|\m_v-\m_w^\parallel\|^2+\|\m_w^\perp\|^2}$
\end{theo}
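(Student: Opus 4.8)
The plan is to reduce to the two cases already settled in Theorem \ref{distanceoftypesoverempty1} and Theorem \ref{distanceoftypesoverempty2} by means of the Lebesgue decomposition of $\m_w$ with respect to $\m_v$, and then to glue them together by an orthogonality (Pythagorean) argument. Write $\m_w=\m_w^\parallel+\m_w^\perp$, where $\m_w^\parallel<<\m_v$ and $\m_w^\perp\perp\m_v$. Fix a Borel set $S\subseteq\R$ witnessing the singularity, so that $\m_v(S)=0$ and $\m_w^\perp(\R\setminus S)=0$; then $\m_w^\parallel$ is concentrated on $\R\setminus S$ and $\m_w^\perp$ on $S$. Using the spectral projections of $\tilde Q$, I would split $w=w^\parallel+w^\perp$ with $w^\parallel:=\chi_{\R\setminus S}(\tilde Q)w$ and $w^\perp:=\chi_S(\tilde Q)w$, so that $\m_{w^\parallel}=\m_w^\parallel$, $\m_{w^\perp}=\m_w^\perp$, and $w^\parallel\perp w^\perp$.

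The key observation is that this decomposition produces a Pythagorean split holding in \emph{every} common realization, not merely at the optimum. Indeed, since $\m_v(S)=0$ we have $\chi_S(\tilde Q)v=0$, so $v=\chi_{\R\setminus S}(\tilde Q)v$ lies in the same spectral subspace as $w^\parallel$, while $w^\perp$ lies in the orthogonal spectral subspace $\chi_S(\tilde Q)\tilde H$. As $\chi_S(\tilde Q)$ and $\chi_{\R\setminus S}(\tilde Q)$ are complementary orthogonal projections in any model (they are computed from the same operator $\tilde Q$), we get $v\perp w^\perp$ and $w^\parallel\perp w^\perp$, whence
\[\|v-w\|^2=\|v-w^\parallel\|^2+\|w^\perp\|^2\]
for any realization of $p$ and $q$ in a common structure of $\K_{(H,\G_Q)}$.

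Taking the infimum over all such realizations, the second term is constant, equal to $\|w^\perp\|^2=\m_w^\perp(\R)=\|\m_w^\perp\|^2$, while the first term is minimized independently, since $v$ and $w^\parallel$ both live in the $\chi_{\R\setminus S}(\tilde Q)$-subspace and $w^\perp$ plays no role there. Thus $\inf\|v-w^\parallel\|$ is exactly the distance between $tp(v/\emptyset)$ and $tp(w^\parallel/\emptyset)$; since $\m_{w^\parallel}=\m_w^\parallel<<\m_v$, Theorem \ref{distanceoftypesoverempty1} applies and gives this distance as $\|\m_v-\m_w^\parallel\|$. Combining the two contributions yields
\[d(p,q)=\sqrt{\|\m_v-\m_w^\parallel\|^2+\|\m_w^\perp\|^2},\]
as required.

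The main obstacle I anticipate is justifying rigorously that the infimum splits, i.e.\ that one can simultaneously bring the absolutely continuous part $w^\parallel$ into optimal position relative to $v$ (embedding $\tilde H_{w^\parallel}$ isometrically inside $\tilde H_v$ via Theorem \ref{HvinHw}, as $\m_w^\parallel<<\m_v$) while keeping the singular part $w^\perp$ orthogonal to $\tilde H_v$. The universal Pythagorean identity above settles the lower bound automatically; for the upper bound one must exhibit a single realization achieving both optima at once, which is possible because the monster model has room to place $\tilde H_{w^\perp}$ (whose measure is singular to $\m_v$) orthogonally to the $v$-part. A secondary point to verify is the bookkeeping with the norm convention on measures, so that the absolutely continuous contribution from Theorem \ref{distanceoftypesoverempty1} and the singular contribution combine exactly as stated, together with confirming the direction in which Theorem \ref{distanceoftypesoverempty1} is invoked, namely with $\m_w^\parallel$ in the role of the absolutely continuous measure.
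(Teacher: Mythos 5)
Your argument is correct and follows essentially the paper's own route: the paper's entire proof of this theorem is the citation ``By Theorem \ref{distanceoftypesoverempty1} and Theorem \ref{distanceoftypesoverempty2},'' and your Lebesgue decomposition of $\m_w$, the spectral splitting $w=\chi_{\R\setminus S}(\tilde{Q})w+\chi_S(\tilde{Q})w$, and the Pythagorean identity are precisely the intended way of combining those two results. Your write-up in fact supplies details the paper omits (the identity holding in every common realization since $S$ depends only on the type-invariant measures, plus the simultaneous optimization for the upper bound), and your flagged caveat about the norm convention on measures matches an imprecision already present in the paper's Theorems \ref{distanceoftypesoverempty1} and \ref{distanceoftypesoverempty2} themselves, so your proof is correct relative to those statements as given.
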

\begin{proof}
By Theorem \ref{distanceoftypesoverempty1} and Theorem \ref{distanceoftypesoverempty2}.
\end{proof}

\begin{theo}\label{distanceoftypes}
Let $G\subseteq\tilde{H}$ be small, let $p,q\in S(G)$ and let $v,w\in\tilde{H}$ be such that $u\models p$ and $v\models q$. Then, 
\[d(p,q)=\sqrt{[P_(v)-P_G(w)]^2+d^2(gatp(P_G^\perp v/\emptyset),gatp(P_G^\perp w/\emptyset)}\]
\end{theo}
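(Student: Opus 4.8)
The plan is to compute $d(p,q)$ directly from its definition as the infimum of $\|v'-w'\|$ over all realizations $v'\models p$ and $w'\models q$ inside the monster model $(\tilde H,\tilde Q)$; I read the statement with the evident corrections $v\models p$, $w\models q$, and $P_G(v)$ in place of the mistyped $P_{(}v)$. The starting point is Theorem \ref{typeoverA}: since $p,q\in S(G)$, every realization $v'$ of $p$ satisfies $P_Gv'=P_Gv$ and $\m_{P_{G^\perp}v'}=\m_{P_{G^\perp}v}$, and every realization $w'$ of $q$ satisfies $P_Gw'=P_Gw$ and $\m_{P_{G^\perp}w'}=\m_{P_{G^\perp}w}$. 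Thus, across all admissible realizations, the $\tilde H_G$-parts are rigidly fixed, while the $\tilde H_G^\perp$-parts are free to vary subject only to carrying the prescribed spectral measure.

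First I would record the orthogonal splitting. For any pair of realizations,
\[
v'-w'=(P_Gv-P_Gw)+(P_{G^\perp}v'-P_{G^\perp}w'),
\]
where the first summand lies in $\tilde H_G$ and the second in $\tilde H_G^\perp$. As these subspaces are orthogonal,
\[
\|v'-w'\|^2=\|P_Gv-P_Gw\|^2+\|P_{G^\perp}v'-P_{G^\perp}w'\|^2,
\]
and the first term is a constant independent of the choice of $v'$ and $w'$. Taking the infimum therefore reduces the whole problem to minimizing $\|P_{G^\perp}v'-P_{G^\perp}w'\|$ over admissible realizations.

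Next I would identify this residual infimum with a distance of types over $\emptyset$. Because $\tilde H_G$ and $\tilde H_G^\perp$ are orthogonal reducing subspaces for $\tilde Q$, any vector of $\tilde H_G^\perp$ with spectral measure $\m_{P_{G^\perp}v}$ can be added to the fixed vector $P_Gv$ to produce, by Theorem \ref{typeoverA}, a genuine realization of $p$, and symmetrically for $q$. Hence as $v',w'$ range over realizations of $p,q$, the orthogonal parts $P_{G^\perp}v'$ and $P_{G^\perp}w'$ range exactly over the vectors of spectral measure $\m_{P_{G^\perp}v}$ and $\m_{P_{G^\perp}w}$, which by Theorem \ref{typeoverempty} are precisely the realizations of $gatp(P_{G^\perp}v/\emptyset)$ and $gatp(P_{G^\perp}w/\emptyset)$. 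Consequently
\[
\inf\|P_{G^\perp}v'-P_{G^\perp}w'\|=d\bigl(gatp(P_{G^\perp}v/\emptyset),gatp(P_{G^\perp}w/\emptyset)\bigr),
\]
and substituting into the identity above yields the claimed formula; if one wants the right-hand distance explicitly, Theorem \ref{distanceoftypesoverempty3} supplies it via the Lebesgue decomposition of $\m_{P_{G^\perp}w}$ against $\m_{P_{G^\perp}v}$.

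The step I expect to be the main obstacle is the decoupling of the two infima, namely justifying that one may hold the $\tilde H_G$-components pinned at $P_Gv$ and $P_Gw$ while \emph{independently} driving the $\tilde H_G^\perp$-components toward their optimal relative position within a single model. This is precisely where the amalgamation property (Theorem \ref{Theorem&Amalgamation&Property}) together with the universality and homogeneity of the monster enter: one amalgamates over $\tilde H_G$ so that the fixed parts genuinely coincide, and one must check that the orthogonal complement $\tilde H_G^\perp$ inside the monster is still rich enough (it retains $\kappa$-many copies of each $L^2(\R,\m)$) that the optimal $\emptyset$-configuration can be realized there, so that restricting the $\emptyset$-distance computation to $\tilde H_G^\perp$ does not change its value. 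Verifying that the resulting amalgam remains in $\K_{(H,\G_Q)}$, i.e. that its operator stays spectrally equivalent to $Q$, closes this gap.
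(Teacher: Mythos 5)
Your proposal is correct and takes essentially the same approach as the paper: the paper's own proof is nothing more than the citation ``By Theorem \ref{typeoverA}'', and your argument --- the Pythagorean split of $\|v'-w'\|^2$ into the pinned $\tilde{H}_G$-component plus the orthogonal residual, with Theorem \ref{typeoverempty} identifying the residual infimum as the distance of the $\emptyset$-types and Theorem \ref{distanceoftypesoverempty3} evaluating it --- is precisely the intended filling-in of that one-line proof. The decoupling issue you flag at the end is genuine but is resolved exactly as you indicate, by the richness of the monster model (it contains $\kappa$ many copies of each $L^2(\R,\m)$ orthogonal to $\tilde{H}_G$), so no gap remains.
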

\begin{proof}
By Theorems \ref{typeoverA}
\end{proof}

\begin{coro}\label{numberoftypes}
Let $G\subseteq \tilde{H}$ then $dens[S_1(F)]\leq |F|\times 2^{\aleph_0}$
\end{coro}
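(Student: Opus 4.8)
The plan is to reduce counting (the density character of) the type space $S_1(G)$ to counting points in a product of two metric spaces whose densities can be controlled separately: the Hilbert space $\tilde{H}_G$ generated by $G$, and the space $S_1(\emptyset)$ of $1$-types over $\emptyset$, which is parametrized by spectral measures on $\R$. (Throughout I read $F=G$, the two letters in the statement being the same small set.)

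First I would invoke Theorem \ref{typeoverA}: a type $p=gatp(v/G)\in S_1(G)$ is completely determined by the pair $(P_G v,\, gatp(P_{G^\perp}v/\emptyset))$. Then Theorem \ref{distanceoftypes} shows that the assignment
\[
p\longmapsto \big(P_G v,\ gatp(P_{G^\perp}v/\emptyset)\big)
\]
is an isometric embedding of $S_1(G)$ into the product $\tilde{H}_G\times S_1(\emptyset)$ with the $\ell^2$ product metric $d((x,s),(y,t))=\sqrt{\|x-y\|^2+d(s,t)^2}$. Since the density character of a product of metric spaces is at most the product of the densities of its factors, it suffices to bound $dens(\tilde{H}_G)$ and $dens(S_1(\emptyset))$ separately.

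For the first factor, $\tilde{H}_G$ is by Definition \ref{DefinitionHv} the closed span of $\{f(Q)v\ |\ v\in G,\ f\text{ bounded Borel}\}$, and each $\tilde{H}_v\simeq L^2(\R,\m_v)$ is separable by Lemma \ref{HvsimL2muv}; taking countable dense subsets of the $\tilde{H}_v$ and forming their $\Q[i]$-linear combinations yields a dense set of size $|G|+\aleph_0$, so $dens(\tilde{H}_G)\le |G|+\aleph_0$. For the second factor, by Theorem \ref{typeoverempty} the map $\m_v\mapsto gatp(v/\emptyset)$ sends distinct measures to distinct types and every type over $\emptyset$ arises this way, so $dens(S_1(\emptyset))\le |S_1(\emptyset)|$ is bounded by the number of finite regular Borel measures on $\R$. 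The key step is to see this number is $2^{\aleph_0}$: each such measure is determined by its right-continuous monotone cumulative distribution function $F(x)=\m((-\infty,x])$, which is in turn determined by its restriction to $\Q$, so there are at most $|\R^\Q|=(2^{\aleph_0})^{\aleph_0}=2^{\aleph_0}$ of them. Combining, $dens[S_1(G)]\le (|G|+\aleph_0)\cdot 2^{\aleph_0}=|G|\times 2^{\aleph_0}$.

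The delicate point is this second factor. Because the total-variation distances computed in Theorems \ref{distanceoftypesoverempty1}--\ref{distanceoftypesoverempty3} make $S_1(\emptyset)$ non-separable, one cannot exhibit a small dense family of ``nice'' measures directly (discrete measures do not approximate continuous ones in variation); the bound must instead come from the crude cardinality count of \emph{all} finite Borel measures, which is precisely why $2^{\aleph_0}$ and not $\aleph_0$ appears. I would also verify the edge cases where $G$ is finite or empty, in which the Hilbert factor contributes only $\aleph_0\le 2^{\aleph_0}$ and the stated bound $|F|\times 2^{\aleph_0}$ still holds.
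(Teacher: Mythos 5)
Your proposal is correct and is essentially the paper's own argument: the paper's proof of Corollary \ref{numberoftypes} is the single line ``Clear from Theorem \ref{typeoverempty}, Theorem \ref{typeoverA} and Theorem \ref{distanceoftypes},'' i.e.\ exactly your reduction of a type $p=gatp(v/G)$ to the pair $\bigl(P_G v,\ gatp(P_{G^\perp}v/\emptyset)\bigr)$ with the product metric of Theorem \ref{distanceoftypes}. Your explicit counts --- $\mathrm{dens}(\tilde{H}_G)\leq |G|+\aleph_0$ via separability of each $\tilde{H}_v\simeq L^2(\R,\mu_v)$, and at most $2^{\aleph_0}$ finite Borel measures via cumulative distribution functions restricted to $\Q$ --- merely supply the details the paper leaves implicit in the word ``clear.''
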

\begin{proof}
Clear from Theorem \ref{typeoverempty}, Theorem \ref{typeoverA} and Theorem \ref{distanceoftypes}.
\end{proof}

\begin{theo}\label{superstability}
$\K_{(H,\G_Q)}$ is $\k$-stable for $\k\geq |\s|$.
\end{theo}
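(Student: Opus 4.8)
The plan is to bound the density character of the space of Galois types over a set $G$ with $|G| = \kappa \geq |\sigma|$, since $\kappa$-stability in this metric setting means exactly that $\mathrm{dens}[S(G)] \leq \kappa$ whenever $|G| \leq \kappa$. The main tool is the type-counting machinery already assembled, namely Theorem \ref{typeoverA}, which identifies a Galois type $gatp(v/G)$ with the pair $(P_G v, \mu_{P_{G^\perp}v})$, together with the distance formula of Theorem \ref{distanceoftypes} and the density estimate of Corollary \ref{numberoftypes}.

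First I would reduce the problem to counting the two components of a type separately. By Theorem \ref{typeoverA}, a type over $G$ is determined by the projection $P_G v$ onto $\tilde{H}_G = dcl(G)$ and by the spectral measure $\mu_{P_{G^\perp}v}$ of the orthogonal part; by Theorem \ref{distanceoftypes} the metric on $S(G)$ splits accordingly as the (Pythagorean) combination of the norm distance between the projections and the type-distance between the orthogonal parts over $\emptyset$. So the density of $S(G)$ is controlled by the product of the density of the closed subspace $\tilde{H}_G$ (for the first coordinate) and the density of $S(\emptyset)$ (for the second). The space $\tilde{H}_G$ has density at most $|G| + \aleph_0 \leq \kappa$, so the first coordinate contributes at most $\kappa$ many choices up to small distance.

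For the second coordinate I would count types over $\emptyset$, which by Theorem \ref{typeoverempty} correspond to spectral measures $\mu_v$; the relevant measures are regular Borel measures supported on $\sigma(Q)$, and by Theorems \ref{distanceoftypesoverempty1}--\ref{distanceoftypesoverempty3} the distance between two such types is computed from the total-variation distance of the measures via the Lebesgue decomposition. The number of such measures up to small total-variation distance is governed by the density of the measure space on $\sigma$, which is $|\sigma| + 2^{\aleph_0}$; this is where the hypothesis $\kappa \geq |\sigma|$ enters, guaranteeing this count is at most $\kappa$. Combining the two coordinates, Corollary \ref{numberoftypes} gives $\mathrm{dens}[S(G)] \leq |G| \times 2^{\aleph_0} \leq \kappa$ under the stated cardinal arithmetic, which is exactly $\kappa$-stability.

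The main obstacle I expect is handling the $2^{\aleph_0}$ factor correctly: one must check that the density character (not the cardinality) of the relevant space of spectral measures in the total-variation metric is at most $2^{\aleph_0}$, so that it is absorbed into $\kappa$ precisely when $\kappa \geq |\sigma|$ and $\kappa \geq 2^{\aleph_0}$. This requires that each $\mu_v$ lives on the separable metric space $\sigma(Q) \subseteq \R$ and can be approximated in total variation by measures drawn from a fixed dense family of size $2^{\aleph_0}$. Once this separability-of-measures estimate is in place, the result follows directly from Corollary \ref{numberoftypes}, so I would state it as a short deduction from the preceding type-distance theorems rather than reproving the counting from scratch.
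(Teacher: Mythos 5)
Your proposal follows the paper's own route exactly: the paper proves this theorem with the single line ``Clear from Corollary \ref{numberoftypes}'', and that corollary is itself deduced from Theorems \ref{typeoverempty}, \ref{typeoverA} and \ref{distanceoftypes} --- precisely the identification of a type over $G$ with the pair $(P_G v,\,\mu_{P_{G^\perp}v})$ and the Pythagorean distance formula that you spell out. Your closing caveat about the $2^{\aleph_0}$ factor is well taken but reflects a looseness in the paper itself rather than in your argument: the bound $dens[S_1(F)]\leq |F|\times 2^{\aleph_0}$ of Corollary \ref{numberoftypes} literally yields stability only for $\kappa\geq 2^{\aleph_0}$, and recovering the stated threshold $\kappa\geq|\sigma|$ (e.g.\ $\aleph_0$-stability when $\sigma(Q)$ is countable) requires exactly the refinement you flag, namely that the measures supported on $\sigma(Q)$ form a space of total-variation density $|\sigma(Q)|+\aleph_0$ rather than $2^{\aleph_0}$.
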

\begin{proof}
Clear from Corollary \ref{numberoftypes}.
\end{proof}

\bdf
A MAEC $\K$ is said to be $\aleph_0$-\textit{stable up to perturbations} if for every pair of separable structure $\MM\prec_\K\NN$, every type $p\in S(\MM)$ and every $\e>0$, there is a separable structure $\NN^\prime$ and an $\e$-perturbation $f:\NN\to\NN^\prime$ such that $p$ is realized in $\NN^\prime$ and $f$ is a ($0$)isomorphism over $\MM$.
\edf

\bth\label{StableUpToPerturbations}
$\K_{(H,\G_Q)}$ is $\aleph_0$-stable up to perturbations.
\eth
\bpf
Let $(H_0,Q_0)\prec (H_1,\G_{Q_1})\in\K_{(H,\G_Q)}$, and let $p\in S(H_0)$. Let $v\in\tilde{H}$ be a realization of $p$ in the monster model. Since $(H_0,Q_0)\oplus (L^2(\R,\m_{v_e}),M_{f_{v_e}}$ and $(H_1,\G_{Q_1})$ are separable and spectrally equivalent, by Theorem \ref{Theorem&Consecuence&Weyl&vonNeumannBerg}, they are approimately uniformly equivalent and therefore there is an $\e$-perturbation relating $(H_1,\G_{Q_1})$ and $(H_0,Q_0)\oplus (L^2(\R,\m_{v_e}),M_{f_{v_e}}$.
\epf

\section{spectral independence}\label{forking}
In this section we define an independence relation in $\K_{(H,\G_Q)}$, called \textit{spectral independence}. Theorem \ref{explicitnonforkingrelation} states that this relation has the same properties as non-forking for superstable firstorder theories, while Theorem \ref{TheoremSplittingImpliesSpectralIndependence} and Theorem \ref{Theorem&Spectral&Independence&Implies&Splitting} state that this relation characterize non-splitting.

\begin{defin}\label{AindependentB}
Let $v\in\tilde{H}$ and let $F$, $G\subseteq\tilde{H}$. We say that $v$ is \textit{spectrally independent} from $G$ over $F$ if $P_{acl(F)}v=P_{acl(F\cup G)}v$ and denote it $v\ind^*_F G$.
\end{defin}

\begin{rem}\label{independenceoverempty}
Let $v$, $w\in\tilde{H}$. Then $v$ is independent from $w$ over $\emptyset$ if and only if $\tilde{H}_{v_e}\perp\tilde{H}_{w_e}$ and denote it $v\ind^*_\emptyset w$. 
\end{rem}

\begin{rem}\label{independenceoverA}
Let $v$, $w\in\tilde{H}$. Let $G\subseteq\tilde{H}$ be small. Then $v$ is independent from $w$ over $G$ if and only if $\tilde{H}_{P^\perp_{acl(G)}(v)}\perp \tilde{H}_{P^\perp_{acl(G)}(w)}$ and denote it $v\ind^*_G w$.
\end{rem}

\brm\label{RemarkItIsEnoughIndependenceForOneVector}
Let $\bv\in H^n$ and $E$, $F\subseteq H$. Then $\bv\ind^*_EF$ if and only if for every $j=1,\dots,n$ $v_j\ind^*_EF$ that is, for all $j=1,\dots,n$ $P_{acl(E)}(v_j)=P_{acl(E\cup F)}(v_j)$
\erm

\begin{theo}\label{nonforkingextension}
Let $F\subseteq G\subseteq H$, $p\in S_n(F)$ $q\in S_n(G)$ and $\bv=(v_1,\dots,v_n)$, $\bw=(v_1,\dots,v_n)\in H^n$ be such that $p=tp(\bv/F)$ and $q=tp(\bw/G)$. Then $q$ is an extension of $p$ such that $\bw\ind^*_FG$ if and only if the following conditions hold:
    \begin{enumerate}
        \item For every $j=1,\dots,n$, $P_{acl(F)}(v_j)=P_{acl(G)}(w_j)$
        \item For every $j=1,\dots,n$, $\m_{P^\perp_{acl(F)}v_j}=\m_{P^\perp_{acl(G)}w_j}$
    \end{enumerate}
\end{theo}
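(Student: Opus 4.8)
The plan is to strip the statement down to a single coordinate and then settle it by a two-line computation with orthogonal projections onto the nested algebraic closures $acl(F)\subseteq acl(G)$ (the inclusion being immediate since $F\subseteq G$ forces $acl(F\cup G)=acl(G)$). Two preliminary reductions make this possible. First, by Remark~\ref{RemarkItIsEnoughIndependenceForOneVector} the independence $\bw\ind^*_F G$ is equivalent to $w_j\ind^*_F G$ for each $j$, which by Definition~\ref{AindependentB} is the concrete identity $P_{acl(F)}w_j=P_{acl(G)}w_j$. Second, both $acl(F)$ and $acl(G)$ contain the discrete part $acl(\emptyset)=H_d$, so they are themselves substructures lying in $\K_{(H,\G_Q)}$ with $acl(F)\prec acl(G)$; hence Theorem~\ref{typeoverA} applies over these bases and tells us that ``$q$ extends $p$'', i.e. $tp(\bw/F)=tp(\bv/F)$, unwinds coordinatewise into $P_{acl(F)}w_j=P_{acl(F)}v_j$ together with $\m_{P^\perp_{acl(F)}w_j}=\m_{P^\perp_{acl(F)}v_j}$. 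From here on I fix $j$ and write $v=v_j$, $w=w_j$.

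For the direction ($\Rightarrow$), assume $q$ extends $p$ and $\bw\ind^*_F G$. Independence gives $P_{acl(F)}w=P_{acl(G)}w$, while the extension clause gives $P_{acl(F)}w=P_{acl(F)}v$ and $\m_{P^\perp_{acl(F)}w}=\m_{P^\perp_{acl(F)}v}$. Concatenating the two projection equalities yields $P_{acl(G)}w=P_{acl(F)}v$, which is condition~(1). For condition~(2), the identity $P_{acl(F)}w=P_{acl(G)}w$ forces $P^\perp_{acl(F)}w=P^\perp_{acl(G)}w$ (each is $w$ minus the common projection), whence $\m_{P^\perp_{acl(G)}w}=\m_{P^\perp_{acl(F)}w}=\m_{P^\perp_{acl(F)}v}$.

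For the direction ($\Leftarrow$), assume (1) and (2). The engine of the argument is the composition identity $P_{acl(F)}=P_{acl(F)}P_{acl(G)}$, valid because $acl(F)\subseteq acl(G)$. Applying it to $w$ and then invoking (1),
\[
P_{acl(F)}w=P_{acl(F)}\bigl(P_{acl(G)}w\bigr)=P_{acl(F)}\bigl(P_{acl(F)}v\bigr)=P_{acl(F)}v,
\]
the last step because $P_{acl(F)}v\in acl(F)$. Thus $P_{acl(F)}w=P_{acl(F)}v=P_{acl(G)}w$, where the final equality is again (1). The outer equality $P_{acl(F)}w=P_{acl(G)}w$ is exactly $w\ind^*_F G$, so $\bw\ind^*_F G$ by Remark~\ref{RemarkItIsEnoughIndependenceForOneVector}; and $P_{acl(F)}w=P_{acl(F)}v$ supplies the projection half of $tp(\bw/F)=tp(\bv/F)$. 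For the spectral half, $P_{acl(F)}w=P_{acl(G)}w$ gives $P^\perp_{acl(F)}w=P^\perp_{acl(G)}w$, so by (2) $\m_{P^\perp_{acl(F)}w}=\m_{P^\perp_{acl(G)}w}=\m_{P^\perp_{acl(F)}v}$; with Theorem~\ref{typeoverA} this secures $q\upharpoonright F=p$.

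The genuinely delicate point is not the projection algebra but the bookkeeping around the base. One must make sure that Theorem~\ref{typeoverA} is legitimately applied with base $acl(F)$ rather than $F$: this is exactly where the observation $H_d\subseteq acl(F)$ is used, for it guarantees $acl(F)\in\K_{(H,\G_Q)}$ (its restricted operator is spectrally equivalent to $Q$) and hence that the projection $P_{acl(F)}$ onto a $\K$-substructure is the object governing Galois types, absorbing the otherwise movable discrete component. The tuple-to-coordinate passage for the independence clause is handled cleanly by Remark~\ref{RemarkItIsEnoughIndependenceForOneVector}; the same passage for the extension clause rests on the fact that the free parts $P^\perp_{acl(F)}v_j$ lie in the common essential summand, where coincidence of the individual spectral measures already pins down the joint Galois type. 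Once these base-choice issues are in place, the biconditional is the displayed one-line projection computation together with its mirror image.
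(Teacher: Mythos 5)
Your proof takes the route the paper intends: the paper's entire proof of this theorem is the one-line citation ``Clear from Theorem \ref{typeoverempty} and Remark \ref{independenceoverA}'', and your coordinatewise reduction via Remark \ref{RemarkItIsEnoughIndependenceForOneVector} (with $acl(F\cup G)=acl(G)$ since $F\subseteq G$), followed by the two computations built on $P_{acl(F)}=P_{acl(F)}P_{acl(G)}$, is exactly the expansion of that citation. The projection algebra in both directions is correct and is the real content of the equivalence.

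The soft spot is the base change you yourself single out as ``the genuinely delicate point'', and your disposal of it by assertion does not hold up. Two concrete objections. First, $acl(F)$ need not be a member of $\K_{(H,\G_Q)}$: it is the closed span of $H_d$ and $dcl(F)$, and the essential spectrum of $\tilde{Q}$ restricted to it can be strictly smaller than $\s_e(Q)$ (take $F=\{v\}$ with $\m_v$ supported on a proper closed subset of $\s_e(Q)$; adding $H_d$ only contributes accumulation points of $\s_d(Q)$), so spectral equivalence fails and the hypotheses of Theorem \ref{typeoverA} are not met over the base $acl(F)$. Second, and more seriously, Galois types over $F$ are \emph{not} governed by $P_{acl(F)}$: Theorem \ref{typeoverA} characterizes $gatp(\cdot/G)$ via $P_G$, the projection onto $H_G=dcl(G)$, and an automorphism fixing $F$ pointwise fixes $acl(F)$ only setwise. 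If $e$ is a discrete eigenvector orthogonal to $H_F$, the unitary acting as $-1$ on $V_\l\cap H_F^\perp$ (where $V_\l$ is the eigenspace) and as the identity elsewhere commutes with $\tilde{Q}$ and fixes $F$, so $gatp(e/F)=gatp(-e/F)$ while $P_{acl(F)}e=e\neq -e=P_{acl(F)}(-e)$; the discrete component is movable and is not ``absorbed''. With this, the forward direction of your unwinding of ``$q$ extends $p$'' fails. In fairness, the defect is inherited from the paper's own formulation: the theorem is stated with $P_{acl(F)}$ and $P_{acl(G)}$ while the result its proof cites is stated with $P_F$ and $P_G$. Your argument (and the statement) becomes correct verbatim either after replacing $P_{acl(\cdot)}$ by $P_{(\cdot)}$ throughout, or under the additional hypothesis, imposed by the paper in related statements such as Theorem \ref{Theorem&Spectral&Independence&Implies&Splitting}, that the bases are algebraically closed --- in which case automorphisms over the base fix $acl(F)$ pointwise, $H_{acl(F)}=acl(F)$, and your two computations are precisely the intended proof.
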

\begin{proof}
Clear from Theorem \ref{typeoverempty} and Remark \ref{independenceoverA}
\end{proof}

\begin{theo}\label{explicitnonforkingrelation}
$\ind^*$ satisfies:
\ben
	\item Local character.
	\item Finite character.
	\item Transitivity of independence
	\item Symmetry
	\item Existence
	\item Stationarity
\een
\end{theo}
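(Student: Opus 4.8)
The plan is to verify each of the six properties of $\ind^*$ by reducing it, via Definition \ref{AindependentB} and Remark \ref{independenceoverA}, to statements about orthogonal projections onto the subspaces $\tilde{H}_{acl(F)}$ and about absolute continuity of spectral measures, so that each model-theoretic axiom becomes an elementary fact about projections in the monster Hilbert space $\tilde{H}$. Recall that $v\ind^*_F G$ means $P_{acl(F)}v = P_{acl(F\cup G)}v$, equivalently (Remark \ref{independenceoverA}) that $\tilde{H}_{P^\perp_{acl(F)}v}\perp\tilde{H}_{P^\perp_{acl(F\cup G)}\text{-part}}$; by Remark \ref{RemarkItIsEnoughIndependenceForOneVector} it suffices throughout to argue for a single vector $v$.

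First I would dispatch the two \emph{character} properties. For finite character, since $acl(F\cup G)=\overline{\bigcup\{acl(F\cup G_0): G_0\subseteq G \text{ finite}\}}$ and projections onto an increasing union of closed subspaces converge strongly to the projection onto the closure, $P_{acl(F)}v=P_{acl(F\cup G)}v$ holds iff it holds for every finite $G_0\subseteq G$. For local character, given any $v$ and any $G$, decompose $v$ along the orthogonal summands $\tilde{H}_d\oplus\bigoplus_{w}\tilde{H}_w$ supplied by Corollary \ref{Corollary&Decomposition&H}; only countably many summands carry nonzero mass of $\mu_{v_e}$, so there is a countable $F_0\subseteq G$ with $P_{acl(F_0)}v=P_{acl(G)}v$, giving $v\ind^*_{F_0}G$. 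This countable-support argument is the analogue of superstability's $\aleph_0$-local character.

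Next come \emph{symmetry} and \emph{transitivity}. Symmetry is the statement that $P_{acl(E)}v=P_{acl(E\cup\{w\})}v$ iff the symmetric condition holds for $w$; I would prove it in the form of Remark \ref{independenceoverA}, namely $\tilde{H}_{P^\perp_{acl(E)}v}\perp\tilde{H}_{P^\perp_{acl(E)}w}$, which is manifestly symmetric in $v$ and $w$. Transitivity, that $v\ind^*_F G$ and $v\ind^*_{F\cup G}K$ together give $v\ind^*_F (G\cup K)$, follows from the telescoping identity $P_{acl(F)}v=P_{acl(F\cup G)}v=P_{acl(F\cup G\cup K)}v$ once each hypothesis is rewritten as the corresponding equality of projections. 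For \emph{existence} (every type over $F$ has a nonforking extension to any $G\supseteq F$) and \emph{stationarity} (the nonforking extension is unique), I would invoke Theorem \ref{nonforkingextension}: existence amounts to realizing, over $acl(G)$, a vector whose $acl(F)$-projection and whose orthogonal spectral measure match those of the given type, which is possible by the amalgamation construction of Theorem \ref{typeoverA} and the monster model; stationarity is exactly the \emph{if and only if} of Theorem \ref{nonforkingextension}, since conditions (1) and (2) there pin down the Galois type uniquely by Theorem \ref{typeoverempty}.

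The main obstacle I expect is \emph{symmetry}, because it is the one axiom whose projection-theoretic reformulation (Remark \ref{independenceoverA}) is not transparently equivalent to the defining condition $P_{acl(F)}v=P_{acl(F\cup\{w\})}v$: one must show that the projection of $v$ onto $\tilde{H}_{acl(F\cup\{w\})}$ beyond $\tilde{H}_{acl(F)}$ lands in the cyclic subspace generated by $P^\perp_{acl(F)}w$, and that this spectral overlap is genuinely symmetric between $v$ and $w$. The key lemma is that for essential (non-discrete) parts, $\tilde{H}_u\cap\tilde{H}_{u'}\neq 0$ precisely when $\mu_{u}$ and $\mu_{u'}$ are not mutually singular, so non-orthogonality of the cyclic subspaces is a symmetric relation on the measures; combined with the Lebesgue decomposition this yields the symmetric characterization. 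Once symmetry is secured in this measure-theoretic form, the remaining properties are routine consequences of the cited theorems, and I would present the whole proof as a short sequence of appeals to Theorems \ref{typeoverempty}, \ref{typeoverA} and \ref{nonforkingextension} together with the two Remarks.
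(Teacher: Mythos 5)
Your overall skeleton coincides with the paper's proof: reduce to single vectors via Remark \ref{RemarkItIsEnoughIndependenceForOneVector}, prove local and finite character by approximating $P_{acl(G)}v$ inside the generated subspace, get transitivity by telescoping the nested projection equalities, get symmetry from the orthogonality reformulation of Remark \ref{independenceoverA}, and get existence and stationarity from Theorem \ref{nonforkingextension} together with the construction $H^{\prime\prime}=H^\prime\oplus L^2(\R,\mu_{(P^\perp_{acl(F)}v)_e})$. However, your designated ``key lemma'' for symmetry is false. In the monster model every spectral measure occurs with multiplicity $\kappa$: take $u\perp u^\prime$ generating two orthogonal copies of $L^2(\R,\mu)$; then $\mu_u=\mu_{u^\prime}=\mu$ are as far from mutually singular as possible, yet $\tilde{H}_u\cap\tilde{H}_{u^\prime}=0$ and indeed $\tilde{H}_u\perp\tilde{H}_{u^\prime}$. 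Only one direction of your lemma holds (a nonzero vector in the intersection has spectral measure absolutely continuous with respect to both $\mu_u$ and $\mu_{u^\prime}$); mutual singularity characterizes orthogonality of \emph{types}, i.e.\ orthogonality ranging over all pairs of realizations (Theorem \ref{orthogonalityoverempty}), not orthogonality of the two given cyclic subspaces. Fortunately symmetry needs no measure theory at all: $H_{acl(F)}$ is invariant under every spectral projection, hence reducing, so $H_{acl(F\cup\{w\})}=H_{acl(F)}\oplus\tilde{H}_{P^\perp_{acl(F)}w}$, and therefore $v\ind^*_F w$ holds iff $\tilde{H}_{P^\perp_{acl(F)}v}\perp\tilde{H}_{P^\perp_{acl(F)}w}$, a manifestly symmetric condition; this is exactly the content of Remark \ref{independenceoverA}, which is all the paper invokes.

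The second genuine gap is your blanket claim that ``it suffices throughout to argue for a single vector.'' Remark \ref{RemarkItIsEnoughIndependenceForOneVector} reduces the \emph{relation} $\bv\ind^*_E F$ to its coordinates, which legitimately handles local character, finite character, transitivity and symmetry --- and the paper is careful to claim only these. Existence and stationarity, by contrast, are assertions about $n$-types, and knowing that each coordinate's $1$-type has a unique nonforking extension does not determine the joint Galois type of the tuple (Theorem \ref{typeoverempty} characterizes types of single vectors only). The paper closes this with an induction on $n$: one conjugates by monster-model automorphisms fixing the base to align the first $n$ coordinates, extends the last coordinate independently over $Gv_1^\prime\cdots v_n^\prime$, and concludes by transitivity; your proposal needs that induction (or an $n$-type analogue of Theorem \ref{typeoverempty}) to be complete. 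A smaller glitch of the same kind occurs in your local character argument: the orthogonal summands supplied by Corollary \ref{Corollary&Decomposition&H} are not generated by elements of $G$, so ``countably many summands carry mass of $\mu_{v_e}$'' does not by itself produce a countable $F_0\subseteq G$; the paper instead approximates $P_{acl(G)}v$ by finite sums $\sum_{j}f_j^k(\tilde{Q})e_j^k$ with $e_j^k\in G$ and takes $E_0$ to be the countable set of the $e_j^k$, and your finite-character argument via strong convergence of projections along the directed family of finitely generated subspaces is the correct repair.
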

\begin{proof}
By Remark \ref{RemarkItIsEnoughIndependenceForOneVector}, to prove local character, finite character and transitivity it is enough to show them for the case of a 1-tuple. 
\begin{description}
   \item[Local character]
Let $v\in H$ and $G\subseteq \tilde{H}$. Let $w=(P_{acl(G)}(v))_e$. Then there exist a sequence of $(l_k)_{k\in\N}\subseteq \N$, a sequence  $(f_1^k,\dots,f_{l_k}^k)_{k\in\N}$ of finite tuples of bounded Borel funtions of $\R$ and a sequence of finite tuples $(e_1^k,\dots,e_{l_k}^k)_{k\in\N}\subseteq G$ such that if $w_k:=\sum_{j=1}^{l_k}f_j^k(\tilde{Q})e_j^k$ for $k\in\N$, then $w_k\to w$ when $k\to\infty$. Let $E_0=\{e_j^k\ |\ j=1,\dots,l_k\text{ and }k\in\N\}$. Then $v\ind_{E_0}^*E$ and $|E_0|=\aleph_0$.
   \item[Finite character] We show that for $v\in H$, $E,F\subseteq \tilde{H}$, $v\ind^*_EF$ if and only if $v\ind^*_EF_0$ for every finite $F_0\subseteq F$. The left to right direction is clear. For right to left, suppose that $v\nind^*_EF$. Let $w=P_{acl(E\cup F)}(v)-P_{acl(E)}(v)$. Then $w\in$ acl$(E\cup F)\setminus$acl$(E)$. 
   
As in the proof of local character, there exist a sequence of pairs $(l_k,n_k)_{k\in\N}\subseteq \N^2$, a sequence $(g_1^k,\dots,g_{l_k+n_k}^k)_{k\in\N}$ of finite tuples of bounded Borel functions on $\R$, and a sequence of finite tuples $(e_1^k,\dots,e_{l_k}^k,f_1^k,\dots,f_{n_k}^k)_{k\in\N}$ such that $(e_1^k,\dots,e_{l_k}^k)\subseteq E$, $(f_1^k,\dots,f_{n_k}^k)_{k\in\N}\subseteq F$ and if $w_k:=\sum_{j=1}^{l_k}g_j^k(\tilde{Q})e_i^k+\sum_{j=1}^{n_k}g_{l_k+j}^k(\tilde{Q})f_j^k$ for $k\in\N$, then $w_k\to w$ when $k\to\infty$.

If $v\nind^*_EF$, then $w=P_{acl(E\cup F)}(v)-P_{acl(E)}(v)\neq 0$. For $\e=\|w\|>0$ there is $k_\e$ such that if $k\geq k_\e$ then $\|w-w_k\|<\e$. Let $F_0:=\{f_1^1,\dots,f_{k_\e}^{n_{k_\e}}\}$ Then $F_0$ is a finite subset such that $v\nind^*_EF_0$.
   \item[Transitivity of independence] Let $v\in H$ and $E\subseteq F\subseteq G\subseteq H$. If $v\ind_E^*G$ then  $P_{acl(E)}(v)=P_{acl(G)}(v)$. It is clear that $P_{acl(E)}(v)=P_{acl(F)}(v)=P_{acl(G)}(v)$ so $v\ind_E^*F$ and $v\ind_F^*G$.
Conversely, if $v\ind_E^*F$ and $v\ind_F^*G$, we have that $P_{acl(E)}(v)=P_{acl(F)}(v)$ and $P_{acl(F)}(v)=P_{acl(G)}(v)$. Then $P_{acl(E)}(v)=P_{acl(G)}(v)$ and $v\ind_E^*G$.
   \item[Symmetry]  It is clear from Remark \ref{independenceoverA}.
   \item[Invariance] Let $U$ be an automorphism of $(\tilde{H},\G_{\tilde{Q}})$. Let $\bv=(v_1,\dots,v_n)$,$\bw=(w_1,\dots,w_n)\in \tilde{H}^n$ and $G\subseteq \tilde{H}$ be such that $\bv\ind_G^*\bw$. By Remark \ref{independenceoverA}, this means that for every $j$, $k=1,\dots,n$ $\tilde{H}_{P^\perp_{acl(G)}(v_j)}\perp \tilde{H}_{P^\perp_{acl(G)}(w_k)}$. It follows that for every $j$, $k=1,\dots,n$ $\tilde{H}_{P^\perp_{acl(UG)}(Uv_j)}\perp \tilde{H}_{P^\perp_{acl(UG)}(Uw_k)}$ and, again by Remark \ref{independenceoverA}, $Uv\ind_{acl(UG)}^*Uw$.
   \item[Existence] Let $F\subseteq G\subseteq \tilde{H}$ be small sets. We show, by induction on $n$, that for every $p\in S_n(F)$, there exists $q\in S_n(G)$ such that $q$ is an $\ind^*$-independent extension of $p$. 
		\bdc
			\item[Case $n=1$] Let $v\in \tilde{H}$ be such that $p=tp(v/F)$ and let $(H^\prime,Q^\prime)\in\K_{(H,\G_Q)}$ be a structure containing $v$ and $G$. Define 
\[H^{\prime\prime}:=H^\prime\oplus L^2(\R,\m_{(P^\perp_{acl(F)}v)_e}),\]
\[Q^{\prime\prime}:=Q^\prime\oplus M_{f_{(P^\perp_{acl(F)}v)_e}}\]
and
\[v^\prime:=P_{acl(F)}v+P^\perp_{acl(F)}v_d+(1)_{\sim_{\m_{(P^\perp_{acl(F)}v)_e}}}\]
Then $(H^{\prime\prime},Q^{\prime\prime})\in\K_{(H,\G_Q)}$, $v^\prime\in H^{\prime\prime}$ and, by Theorem \ref{nonforkingextension}, the type $gatp(v^\prime/G)$ is a $\ind^*$-independent extension of $tp(v/F)$.
			\item[Induction step] Now, let $\bv=(v_1,\dots,v_n,v_{n+1})\in \tilde{H}^{n+1}$. By induction hypothesis, there are $v_1^\prime,\dots,v_n^\prime\in H$ such that $gatp(v_1^\prime,\dots,v_n^\prime/G)$ is a $\ind^*$-independent extension of $gatp(v_1,\dots,v_n/F)$. Let $U$ be a monster model automorphism fixing $F$ pointwise such that for every $j=1,\dots,n$, $U(v_j)=v_j^\prime$. Let $v_{n+1}^\prime\in \tilde{H}$ be such that $gatp(v_{n+1}^\prime/Gv_1^\prime\cdots v_n^\prime)$ is a $\ind^*$-independent extension of $gatp(U(v_{n+1})/Fv_1^\prime,\cdots v_n^\prime)$. Then, by transitivity, $gatp(v_1^\prime,\dots,v_n^\prime,v_{n+1}^\prime/G)$ is a $\ind^*$-independent extension of $gatp(v_1,\dots,v_n,v_{n+1}/F)$.
		\edc
   \item[Stationarity] Let $F\subseteq G\subseteq \tilde{H}$ be small sets. We show, by induction on $n$, that for every $p\in S_n(F)$, if $q\in S_n(G)$ is a $\ind^*$-independent extension of $p$ to $G$ then $q=p^\prime$, where $p^\prime$ is the $\ind^*$-independent extension of $p$ to $G$ built in the proof of existence.
		\bdc
			\item[Case $n=1$] Let $v\in H$ be such that $p=gatp(v/F)$, and let $q\in S(G)$ and $w\in H$ be such that $w\models q$. Let $v^\prime$ be as in previous item. Then, by Theorem \ref{nonforkingextension} we have that:
 		        \begin{enumerate}
        	   		\item $P_{acl(F)}v=P_{acl(G)}v^\prime=P_{acl(G)}w=$
			    	\item $\m_{P^\perp_{acl(F)}v}=\m_{P^\perp_{acl(G)}w}=\m_{P^\perp_{acl(G)}v^\prime}$
			    \end{enumerate}
This means that $P_{acl(G)}v^\prime=P_{acl(G)}w$, $\m_{P^\perp_{acl(G)}w}=\m_{P^\perp_{acl(G)}v^\prime}$ and, therefore $q=tp(v^\prime/G)=p^\prime$.
			\item[Induction step] Let $\bv=(v_1,\dots,v_n,v_{n+1})$, $\bv^\prime=(v_1^\prime,\dots,v_n,v_{n+1}^\prime)$ and $\bw=(w_1,\dots,w_n)\in\tilde{H}$ be such that $\bv\models p$, $\bv^\prime\models p^\prime$ and $\bw\models q$. By transitivity, we have that $gatp(v_1^\prime,\dots,v_n^\prime/G)$ and $gatp(w_1,\dots,w_n/G)$ are $\ind^*$-independent extensions of $gatp(v_1,\dots,v_n/F)$. By induction hypothesis, $gatp(v_1^\prime,\dots,v_n^\prime/G)=gatp(w_1,\dots,w_n/G)$. Let $U$ be a monster model automorphism fixing $F$ pointwise such that for every $j=1,\dots,n$, $U(v_j)=v_j^\prime$ and let $U^\prime$ a monster model automorphism fixing $G$ pointwise such that for every $j=1,\dots,n$, $U^\prime(v_j^\prime)=w_j^\prime$. Again by transitivity, 
\[gatp(U^{-1}(v_{n+1}^\prime)/Gv_1\cdots v_n)\] 
and 
\[gatp((U^\prime\circ U)^{-1}(w_{n+1})/Gv_1,\cdots v_n)\] 
are $\ind^*$-independent extensions of $gatp(v_{n+1}/Fv_1,\cdots v_n)$. 

By the case $n=1$,
\[gatp(U^{-1}(v_{n+1}^\prime)/Gv_1\cdots v_n)=gatp((U^\prime\circ U)^{-1}(w_{n+1})/Gv_1,\cdots v_n)\]
and therefore 
\[p^\prime=gatp(v_1^\prime,\dots,v_n^\prime v_{n+1}^\prime/G)=gatp(w_1,\dots,w_n,w_{n+1}/G)=q.\]
		\edc
\end{description}
\end{proof}

\bdf
Let $\K$ be an homogeneous MAEC with monster model $\MM$. Let $B\subseteq A\subseteq M$ and let $a\in M$. The type $gatp(a/A)$ is said to \textit{split} over $B$ if there are $b$, $c\in A$ such that
\[gatp(b/B)=gatp(c/B)\]
but
\[gatp(b/Ba)\neq gatp(c/Ba)\]
\edf

\begin{theo}\label{TheoremSplittingImpliesSpectralIndependence}
Let $v\in\tilde{H}$ and let $F\subseteq G\subseteq\tilde{H}$. If $gatp(v/G)$ splits over $F$ then $v\nind^*_F G$.
\end{theo}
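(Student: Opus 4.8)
The plan is to prove the contrapositive: assuming $v\ind^*_FG$, that is $P_{acl(F)}v=P_{acl(F\cup G)}v$ by Definition \ref{AindependentB}, I will show that $gatp(v/G)$ does not split over $F$. Write $v_0:=P_{acl(F)}v$, so the hypothesis says exactly that $v-v_0=P_{acl(F\cup G)^\perp}v$, i.e. $v-v_0\perp acl(F\cup G)$. The first step is to upgrade this pointwise orthogonality to the stronger subspace statement $\tilde{H}_{v-v_0}\perp acl(F\cup G)$. This holds because $acl(F\cup G)=\overline{\tilde{H}_{F\cup G}+H_d}$ (by Theorems \ref{definclosure} and \ref{algebraicclosure}) is invariant under every bounded Borel functional calculus operator $f(\tilde{Q})$, since both $\tilde{H}_{F\cup G}$ and $H_d$ are: for $h\in acl(F\cup G)$ and bounded Borel $f$ one has $\langle f(\tilde{Q})(v-v_0)\mid h\rangle=\langle v-v_0\mid \bar{f}(\tilde{Q})h\rangle=0$, because $\bar{f}(\tilde{Q})h\in acl(F\cup G)$.

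Next I record the orthogonal decomposition $acl(Fv)=acl(F)\oplus\tilde{H}_{v-v_0}$, valid because $v_0\in acl(F)$ while $\tilde{H}_{v-v_0}\perp acl(F)\subseteq acl(F\cup G)$. Now fix any $b,c\in G$ with $gatp(b/F)=gatp(c/F)$; I must show $gatp(b/Fv)=gatp(c/Fv)$. Since $b,c\in G$ we have $\tilde{H}_b,\tilde{H}_c\subseteq dcl(G)\subseteq acl(F\cup G)$, so the previous paragraph gives $\tilde{H}_{v-v_0}\perp\tilde{H}_b$ and $\tilde{H}_{v-v_0}\perp\tilde{H}_c$. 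Combined with the decomposition of $acl(Fv)$ this yields $P_{acl(Fv)}b=P_{acl(F)}b$ and, complementarily, $P_{acl(Fv)^\perp}b=P_{acl(F)^\perp}b$, and likewise for $c$.

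Feeding $gatp(b/F)=gatp(c/F)$ through Theorem \ref{typeoverA} (which supplies $P_{acl(F)}b=P_{acl(F)}c$ and $\m_{P_{acl(F)^\perp}b}=\m_{P_{acl(F)^\perp}c}$), I obtain $P_{acl(Fv)}b=P_{acl(Fv)}c$ and $\m_{P_{acl(Fv)^\perp}b}=\m_{P_{acl(Fv)^\perp}c}$. By Theorem \ref{typeoverA} again, these are precisely the two conditions equivalent to $gatp(b/Fv)=gatp(c/Fv)$. Hence no pair $b,c\in G$ can witness splitting of $gatp(v/G)$ over $F$, so that type does not split over $F$, which is the contrapositive.

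The main obstacle is the first step: upgrading the pointwise orthogonality $v-v_0\perp acl(F\cup G)$ furnished by the definition of $\ind^*$ to the subspace orthogonality $\tilde{H}_{v-v_0}\perp acl(F\cup G)$, together with the clean decomposition $acl(Fv)=acl(F)\oplus\tilde{H}_{v-v_0}$; once the functional-calculus invariance of $acl(F\cup G)$ is in hand, everything else is routine bookkeeping with projections and the type characterization.
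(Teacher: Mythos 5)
Your proof is correct and is, at the paper's own level of rigor, a sound argument; but it runs in the opposite logical direction from the paper's. The paper argues the implication directly: from witnesses $w_1,w_2\in G$ of splitting it deduces, via the type characterization (Theorem \ref{typeoverA}) and the decomposition $P_{acl(Fv)}w_i=P_{acl(F)}w_i+P_{P^\perp_{acl(F)}v_e}w_i$, that in either of the two possible cases (unequal projections or unequal measures) some $w_i$ has a nonzero component in $\tilde{H}_{(P^\perp_{acl(F)}v)_e}$; it then flips this by symmetry of non-orthogonality ($P_{P^\perp_{acl(F)}v_e}w_1\neq 0$ forces $P_{acl(Fw_1)}v\neq P_{acl(F)}v$, i.e.\ $v\nind^*_F w_1$) and concludes $v\nind^*_F G$ by monotonicity. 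You instead take the contrapositive: from $v\ind^*_F G$ you upgrade the pointwise orthogonality $v-v_0\perp acl(F\cup G)$ to the subspace orthogonality $\tilde{H}_{v-v_0}\perp acl(F\cup G)$ using invariance of $acl(F\cup G)$ under the bounded Borel functional calculus (legitimate, since $acl(F\cup G)=\overline{\tilde{H}_{F\cup G}+H_d}$ by Theorems \ref{definclosure} and \ref{algebraicclosure}, and both summands are $f(\tilde{Q})$-invariant), and then show that \emph{every} pair $b,c\in G$ with equal types over $F$ keeps equal types over $Fv$, so no witness of splitting can exist. The shared core is the same orthogonal decomposition $acl(Fv)=acl(F)\oplus\tilde{H}_{v-v_0}$ together with Theorem \ref{typeoverA}; what your route buys is that the invariance lemma the paper leaves entirely implicit is made explicit, and the two-case analysis of the splitting witnesses disappears, since you never need to exhibit which vector witnesses dependence. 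One caveat applying equally to your write-up and to the paper's: Theorem \ref{typeoverA} is stated with the projection $P_G$ onto $H_G=dcl(G)$, whereas both arguments apply it with $P_{acl(\cdot)}$; bridging the two requires the observation that $acl$ only adjoins the $H_d$-part, which every $f(\tilde{Q})$ preserves, so your appeal to it is no worse than the original.
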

\bpf
If $gatp(v/G)$ splits over $F$, then there are two vectors $w_1$ and $w_2\in G$ such that $gatp(w_1/F)=gatp(w_2/F)$ but $gatp(w_1/Fv)\neq gatp(w_2/Fv)$. Then, either $gatp(P_{acl(Fv)}^\perp w_1/\emptyset)\neq gatp(P_{acl(Fv)}^\perp w_2/\emptyset)$ or $P_{acl(Fv)}w_1\neq P_{acl(Fv)}w_2$. Let us consider each case:
\bdc
	\item[Case $gatp(P_{acl(Fv)}^\perp w_1/\emptyset)\neq gatp(P_{acl(Fv)}^\perp w_2/\emptyset)$] 
Since 
\[P_{acl(Fv)}^\perp w_1=P_{acl(F)}^\perp w_1-P_{P^\perp_{acl(F)}v_e}w_1\]
and 
\[P_{acl(Fv)}^\perp w_2=P_{acl(F)}^\perp w_2-P_{P^\perp_{acl(F)}v_e}w_2,\]
this means that
\[gatp(P_{P^\perp_{acl(F)}v_e}w_1/\emptyset)\neq gatp(P_{P^\perp_{acl(F)}v_e}w_2/\emptyset)\]
So, either $P_{P^\perp_{acl(F)}v_e}w_1\neq 0$ or $P_{P^\perp_{acl(F)}v_e}w_2\neq 0$. Let us suppose without loss of generality that $P_{P^\perp_{acl(F)}v_e}w_1\neq 0$. Then $P_{w_1}(P^\perp_{acl(F)}v_e)\neq 0$, which implies that $P_{acl(F)}v\neq P_{acl(Fw_1)}v$. That is, $v\nind^*_Fw_1$ and by transitivity, $v\nind^*_FG$.
	\item[Case $P_{acl(Fv)}w_1\neq P_{acl(Fv)}w_2$] Since 
\[P_{acl(Fv)} w_1=P_{acl(F)} w_1+P_{P^\perp_{acl(F)}v_e}w_1\]
and 
\[P_{acl(Fv)} w_2=P_{acl(F)} w_2+P_{P^\perp_{acl(F)}v_e}w_2,\]
this means that $P_{P^\perp_{acl(F)}v_e}w_1\neq P_{P^\perp_{acl(F)}v_e}w_2$ and, therefore either $P_{P^\perp_{acl(F)}v_e}w_1\neq 0$ or $P_{P^\perp_{acl(F)}v_e}w_2\neq 0$. As in previous item, this implies that $v\nind^*_FG$.
\edc
\epf

\bth\label{Theorem&Spectral&Independence&Implies&Splitting}
Let $v\in \tilde{H}$ and $F\subseteq G\subseteq \tilde{H}$ such that $F=acl(F)$ and $B$ is $|A|$-saturated. If $v\nind_F^*G$, then $v$ splits over $F$.
\eth
\bpf
If $v\nind_F^*G$ then $w:=P_Gv-P_Fv\neq 0$ and $w\perp F$. Since $G$ is $|F|$-saturated, there is $w^\prime\in G$ such that $gatp(w/F)=gatp(w^\prime/F)$ and $w^\prime\perp P_Gv$. Since $\langle v\ |\ w\rangle\neq 0$, $P_vw\neq 0$, while $P_vw^\prime=0$. 
\epf

\begin{defin}\label{AepsilonindependentB}
Let $\e>0$, $v\in\tilde{H}$ and let $F$, $G\subseteq\tilde{H}$. We say that $v$ is $\e$-\textit{spectrally independent} from $G$ over $F$ if $P_{acl(F\cup G)}v-P_{acl(F)}v\leq \e$ and denote it $v\ind^\e_F G$.
\end{defin}

\begin{theo}\label{PropertiesOfEpsilonIndependence}
The relation $\ind^\e$  satisfies the following properties:
\bdc
	\item[Local character] Let $v\in H$, $G\subseteq \tilde{H}$ and $\e>0$. Then there is a finite $G_0\subseteq G$ such that $v\ind_{G_0}^\e G$.
	\item[Transitivity of independence] Let $v\in H$ and $D\subseteq E\subseteq F\subseteq G\subseteq H$. If $v\ind_D^\e G$ then $v\ind_E^\e F$
\edc
\end{theo}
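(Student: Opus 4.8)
The plan is to reduce both statements to the explicit description of algebraic closure obtained earlier—namely that $acl(G)$ is the closed subspace $\overline{H_d+\tilde H_G}$ (Theorems \ref{definclosure} and \ref{algebraicclosure})—together with the elementary geometry of orthogonal projections onto a nested family of closed subspaces. Throughout I read $v\ind_F^{\e}G$ as the norm bound $\|P_{acl(F\cup G)}v-P_{acl(F)}v\|\le\e$, and I use repeatedly that for closed subspaces $A\subseteq B$ one has $P_B=P_A+P_{B\cap A^\perp}$, so that $P_Bv-P_Av=P_{B\cap A^\perp}v\in B\cap A^\perp$.

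For \emph{local character}, first observe that since $H_d=acl(\emptyset)$ is contained in $acl(G_0)$ for every finite $G_0\subseteq G$, and since $\tilde H_G$ is by definition the closure of the span of the vectors $f(\tilde Q)u$ with $u\in G$ and $f$ a bounded Borel function, the subspace $acl(G)$ is the closure of $\bigcup\{acl(G_0):G_0\subseteq G\ \text{finite}\}$. Hence $P_{acl(G)}v$ is a limit of vectors $x_k\in acl(G_0^k)$ for suitable finite $G_0^k\subseteq G$. Given $\e>0$, pick $k$ with $\|P_{acl(G)}v-x_k\|<\e$ and put $G_0:=G_0^k$. Because $acl(G_0)\subseteq acl(G)$ and $x_k\in acl(G_0)$, the nearest-point characterization of the projection gives
\[\|P_{acl(G)}v-P_{acl(G_0)}v\|\le\|P_{acl(G)}v-x_k\|<\e,\]
and since $G_0\subseteq G$ forces $acl(G_0\cup G)=acl(G)$, this is exactly $v\ind_{G_0}^{\e}G$.

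For \emph{transitivity}, monotonicity of $acl$ turns the chain $D\subseteq E\subseteq F\subseteq G$ into a chain of closed subspaces $acl(D)\subseteq acl(E)\subseteq acl(F)\subseteq acl(G)$. Decompose
\[P_{acl(G)}v-P_{acl(D)}v=(P_{acl(G)}v-P_{acl(F)}v)+(P_{acl(F)}v-P_{acl(E)}v)+(P_{acl(E)}v-P_{acl(D)}v),\]
where the three summands lie in $acl(G)\cap acl(F)^\perp$, $acl(F)\cap acl(E)^\perp$ and $acl(E)\cap acl(D)^\perp$ respectively. These three shells are mutually orthogonal, so the Pythagorean theorem yields
\[\|P_{acl(G)}v-P_{acl(D)}v\|^2=\|P_{acl(G)}v-P_{acl(F)}v\|^2+\|P_{acl(F)}v-P_{acl(E)}v\|^2+\|P_{acl(E)}v-P_{acl(D)}v\|^2.\]
In particular the middle term is bounded by the left-hand side, so $\|P_{acl(F)}v-P_{acl(E)}v\|\le\|P_{acl(G)}v-P_{acl(D)}v\|\le\e$, i.e. $v\ind_E^{\e}F$.

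Both arguments are the ``soft'' analogues of the corresponding items in Theorem \ref{explicitnonforkingrelation}, with the exact equalities of projections replaced by $\e$-bounds. I expect no serious obstacle: the only point that deserves care is the density claim $acl(G)=\overline{\bigcup_{G_0}acl(G_0)}$ used in local character, which is where the explicit computation of $dcl$ and $acl$ is genuinely needed; everything else is the Pythagorean bookkeeping for projections onto nested subspaces.
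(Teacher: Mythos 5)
Your proposal is correct and takes essentially the same route as the paper: its local character argument approximates $P_{acl(G)}v$ by finite sums $\sum_j f_j^k(\tilde Q)e_j^k$ with $e_j^k\in G$ (your density claim $acl(G)=\overline{\bigcup\{acl(G_0)\,:\,G_0\subseteq G\ \text{finite}\}}$ is just a repackaging of the sequence $w_k\to w$ borrowed from the proof of Theorem \ref{explicitnonforkingrelation}) and then invokes the best-approximation property of the projection onto $acl(G_0)$, while its transitivity is the one-line monotonicity inequality $\|P_{acl(G)}v-P_{acl(D)}v\|\geq\|P_{acl(F)}v-P_{acl(E)}v\|$ for nested subspaces, which your three-shell Pythagorean decomposition merely spells out in detail. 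You supply justifications the paper leaves implicit, but the underlying ideas coincide.
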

\bpf
\bdc
	\item[Local character] Let $v\in H$, $G\subseteq \tilde{H}$ and $\e>0$. Let $w$,  $(l_k)_{k\in\N}\subseteq \N$, $(e_1^k,\dots,e_{l_k}^k)_{k\in\N}\subseteq G$, $(f_1^k,\dots,f_{l_k}^k)_{k\in\N}$ and $w_k$ for $k\in\N$ be as in the proof of local character of $\ind^*$ in Theorem \ref{explicitnonforkingrelation}. Since $w_k\to w$ when $k\to\infty$, there is a $k_1\in\Z$ such that $\|w_k-w\|<\e$ for all $k\geq k_1$. Let $G_o:=\{e_j^k\ |\ j=1,\dots,l_k\text{ and }k\leq k_1\}$. Then, $v\ind_{G_0}^*G$.
	\item[Transitivity of independence] Let $v\in H$ and $D\subseteq E\subseteq F\subseteq G\subseteq H$ and $\e>0$. If $v\ind_D^\e G$ then $\e\geq P_{acl(D\cup G)}v-P_{acl(D)}v=P_{acl(G)}v-P_{acl(D)}v\geq P_{acl(f)}v-P_{acl(E)}v$. Therefore $v\ind_E^\e F$.
%
\edc
\epf


\bdf
Let $\bv=(v_1,\dots,v_n)\in H^n$ and $G\subseteq H$. A \textit{canonical base} for the type $gatp(\bv/G)$ is a set $F\subseteq H_G$ such that $\bv\ind^*_FG$ and $|F|$ is minimal.
\edf

\begin{theo}\label{canonicalbase}
Let $\bv=(v_1,\dots,v_n)\in H^n$ and $G\subseteq H$. Then $Cb(gatp(\bv/G)):=\{(P_Gv_1,\dots,P_Gv_n)\}$ is a canonical base for the type $gatp(\bv/G)$
\end{theo}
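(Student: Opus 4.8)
The plan is to check the two defining clauses of a canonical base for the set $F:=\{P_Gv_1,\dots,P_Gv_n\}$: that $\bv\ind^*_FG$, and that $F$ is of least size among the subsets of $H_G$ enjoying that property. That $F\subseteq H_G$ is immediate, since each $P_Gv_j$ lies in $H_G=dcl(G)$ by Theorem \ref{definclosure}; and because $F\subseteq H_G\subseteq acl(G)$, I also record at the outset that $acl(F\cup G)=acl(G)$ (algebraic closure being idempotent and $H_d\subseteq acl(G)$).

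For the independence, by Remark \ref{RemarkItIsEnoughIndependenceForOneVector} it is enough to prove $v_j\ind^*_FG$ for each $j$, that is $P_{acl(F)}v_j=P_{acl(F\cup G)}v_j=P_{acl(G)}v_j$. By Theorem \ref{definclosure} and the description of algebraic closures (Theorem \ref{algebraicclosure}), $acl(F)$ and $acl(G)$ are the closed subspaces generated by $H_d=acl(\emptyset)$ together with $H_F$ and $H_G$ respectively. Since the projection onto $H_d$ equals $\chi_{\s_d(Q)}(Q)$ and hence belongs to the bounded Borel functional calculus, both $H_F$ and $H_G$ split orthogonally along $H=H_d\oplus H_e$, so that $acl(F)=H_d\oplus(H_F)_e$ and $acl(G)=H_d\oplus(H_G)_e$.

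The heart of the argument is the essential part. Writing $v_j=P_Gv_j+P_{G^\perp}v_j$, the second summand is orthogonal to $H_G$, so its essential component $(P_{G^\perp}v_j)_e$ is orthogonal to $(H_G)_e$ and, since $H_F\subseteq H_G$, also to $(H_F)_e$. As $(P_Gv_j)_e$ lies in both $(H_F)_e$ and $(H_G)_e$, projecting $(v_j)_e$ onto either space returns exactly $(P_Gv_j)_e$; and since $H_d$ is common to $acl(F)$ and $acl(G)$, it fixes the discrete part. Hence $P_{acl(F)}v_j=(v_j)_d+(P_Gv_j)_e=P_{acl(G)}v_j$, which is $v_j\ind^*_FG$, and therefore $\bv\ind^*_FG$.

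Finally I would establish minimality by running this computation in reverse. If $F'\subseteq H_G$ is any set with $\bv\ind^*_{F'}G$, the same splitting gives $P_{acl(F')}v_j=(v_j)_d+P_{(H_{F'})_e}(v_j)_e=(v_j)_d+(P_Gv_j)_e$, forcing $(P_Gv_j)_e\in(H_{F'})_e$; together with $(P_Gv_j)_d\in H_d\subseteq acl(F')$ this yields $P_Gv_j\in acl(F')$, so $F\subseteq acl(F')$. Thus $F$ is contained in the algebraic closure of every witness of the independence and is therefore the minimal such set, i.e.\ a canonical base. The main obstacle is the orthogonal-splitting step of the third paragraph: one must justify carefully that $H_F$ and $H_G$ respect $H=H_d\oplus H_e$ and that passing to essential parts commutes with the projections involved, which is exactly where the functional-calculus identity $P_{H_d}=\chi_{\s_d(Q)}(Q)$ is used.
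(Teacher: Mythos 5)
Your computations are sound and you prove the theorem, but by a genuinely different route from the paper's. Your verification of the independence clause is a direct Hilbert-space calculation: the splitting $acl(F)=H_d\oplus (H_F)_e$ is legitimate because $\s_d(Q)$ is countable, hence Borel, so $E_{\s_d(Q)}=\chi_{\s_d(Q)}(Q)$ leaves every subspace of the form $H_F$ invariant, and your identity $P_{acl(F)}v_j=(v_j)_d+(P_Gv_j)_e=P_{acl(F\cup G)}v_j$ is exactly what Definition \ref{AindependentB} asks for (the paper instead gets this clause in one line from Theorem \ref{nonforkingextension}). Where the two proofs really diverge is the second clause. You interpret minimality as interalgebraicity and show $F\subseteq acl(F')$ for every witness $F'\subseteq H_G$ of the independence, running your projection computation in reverse. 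The paper does something else entirely: it takes a Morley sequence $(v_k)_{k<\omega}$ of $gatp(v/G)$, writes $v_k=P_Gv+w_k$ with the $w_k$ pairwise orthogonal of constant norm, and shows the averages $\frac{v_1+\cdots+v_k}{k}=P_Gv+\frac{w_1+\cdots+w_k}{k}$ converge to $P_Gv$, so that $P_Gv\in dcl((v_k)_{k<\omega})$ --- a law-of-large-numbers argument establishing the standard stability-theoretic canonicity property (the base lies in the definable closure of any Morley sequence). Each approach buys something: yours is self-contained and engages the paper's literal definition, though read strictly that definition demands minimal cardinality $|F|$, which your containment $F\subseteq acl(F')$ does not literally deliver --- for instance, when the measures $\m_{(P_Gv_j)_e}$ are mutually singular a single vector generates $H_{\{P_Gv_1,\dots,P_Gv_n\}}$, so a one-element witness can exist, which is presumably why the paper packages the base as the singleton $\{(P_Gv_1,\dots,P_Gv_n)\}$; the paper's Morley-sequence argument, for its part, proves the property that matters for the general theory but never engages the stated minimality clause at all. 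So both proofs treat that clause loosely, and yours is a correct, complementary argument rather than a reproduction of the paper's.
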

\begin{proof}
First of all, we consider the case of a 1-tuple. By Theorem \ref{nonforkingextension} $gatp(v/G)$ does not fork over $Cb(gatp(v/G))$. Let $(v_k)_{k<\omega}$ a Morley sequence for $gatp(v/G)$. We have to show that $P_Gv\in dcl((v_k)_{k<\omega})$. By Theorem \ref{nonforkingextension}, for every $k<\omega$ there is a vector $w_k$ such that $v_k=P_Gv+w_k$ and $w_k\perp acl(\{P_Gv\}\cup\{w_j\ |\ j< k\})$. This means that for every $k<\omega$, $w_k\in H_e$ and for all $j$, $k<\omega$, $H_{w_j}\perp H_{w_k}$. For $k<\omega$, let $v^\prime_k:=\frac{v_1+\cdots+v_k}{n}=P_Gv+\frac{w_1+\cdots+w_k}{n}$. Then for every $k<\omega$, $v^\prime_k\in dcl((v_k)_{k<\omega})$. Since $v^\prime_k\to P_ev$ when $k\to\infty$, we have that $P_Gv\in dcl((v_k)_{k<\omega})$.

For the case of a general $n$-tuple, by Remark \ref{RemarkItIsEnoughIndependenceForOneVector}, it is enough to repeat previous argument in every component of $\bv$.
\end{proof}

\section{orthogonality and domination}\label{orthogonalitydomination}
In this section, we characterize domination, orthogonality of types in terms of absolute continuity and mutual singularity between spectral measures. This is done in Corollary \ref{orthogonalityoverA} and Corollary \ref{dominationoverA}.

\begin{theo}\label{orthogonalityoverempty}
Let $p,q\in S_1(\emptyset)$, let $v\models p$ and $w\models q$.
Then, $p\perp^a q$ if and only if $\m_{v_e}\perp \m_{w_e}$.
\end{theo}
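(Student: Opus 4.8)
The plan is to reduce the statement to the characterization of independence over $\emptyset$ recorded in Remark \ref{independenceoverempty}: for realizations $v\models p$ and $w\models q$ one has $v\ind^*_\emptyset w$ if and only if $\tilde{H}_{v_e}\perp\tilde{H}_{w_e}$. Reading $p\perp^a q$ as the assertion that \emph{every} pair of realizations is independent over $\emptyset$ (the symmetric notion matching the symmetry of $\ind^*$ proved in Theorem \ref{explicitnonforkingrelation}), the theorem becomes the claim that $\tilde{H}_{v_e}\perp\tilde{H}_{w_e}$ holds for all realizations if and only if $\m_{v_e}\perp\m_{w_e}$. By Theorem \ref{typeoverempty} the type of a vector over $\emptyset$ depends only on its spectral measure, so $\m_{v_e}$ and $\m_{w_e}$ are pinned down by $p$ and $q$, and only the way the two cyclic subspaces sit inside the monster is free.

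For the direction $(\Leftarrow)$ I would show that mutual singularity forces orthogonality of the cyclic subspaces in every realization, via the cross spectral measure $\mu_{v_e,w_e}(\Omega):=\langle E_\Omega v_e\mid w_e\rangle$. Using $E_\Omega^2=E_\Omega=E_\Omega^*$ from Fact \ref{Fact&Resolution&Of&Identity} and Cauchy--Schwarz one obtains the pointwise bound $|\mu_{v_e,w_e}(\Omega)|\le\sqrt{\m_{v_e}(\Omega)}\,\sqrt{\m_{w_e}(\Omega)}$ for every Borel $\Omega$. If $\m_{v_e}\perp\m_{w_e}$, choose a Borel set $A$ with $\m_{v_e}(A^c)=0=\m_{w_e}(A)$; splitting $\Omega$ as $(\Omega\cap A)\cup(\Omega\cap A^c)$ and applying the bound on each piece yields $\mu_{v_e,w_e}\equiv 0$. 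Since $\langle f(Q)v_e\mid g(Q)w_e\rangle=\int \overline{g}\,f\,d\mu_{v_e,w_e}$ for bounded Borel $f,g$, this forces $\tilde{H}_{v_e}\perp\tilde{H}_{w_e}$, hence $v\ind^*_\emptyset w$; as the argument is insensitive to the realization, $p\perp^a q$.

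For the direction $(\Rightarrow)$ I would prove the contrapositive by exhibiting a dependent pair of realizations when $\m_{v_e}$ and $\m_{w_e}$ are not mutually singular. Lebesgue-decompose $\m_{v_e}=\m_a+\m_s$ relative to $\m_{w_e}$, with $\m_a\ll\m_{w_e}$ and $\m_s\perp\m_{w_e}$; failure of mutual singularity gives $\m_a\neq 0$. Fix $w\models q$. Since $\m_a\ll\m_{w_e}$ and $\m_a\neq 0$, Theorem \ref{HvinHw} provides a nonzero $u\in\tilde{H}_{w_e}$ with $\m_u=\m_a$. Now realize $p$ by a vector whose essential part is $v_e=u+u'$, where $u'$ lives in a fresh summand of the monster orthogonal to $\tilde{H}_{w_e}$ with $\m_{u'}=\m_s$ and $\tilde{H}_u\perp\tilde{H}_{u'}$; orthogonality of the cyclic subspaces makes the spectral measures add, so $\m_{v_e}=\m_a+\m_s$ is correct and, with the matching discrete part, Theorem \ref{typeoverempty} certifies that this vector realizes $p$. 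But $\tilde{H}_u\subseteq\tilde{H}_{v_e}\cap\tilde{H}_{w_e}$ with $u\neq 0$, so $\tilde{H}_{v_e}\not\perp\tilde{H}_{w_e}$, i.e. $v\nind^*_\emptyset w$, giving $p\not\perp^a q$.

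The routine ingredients are the Cauchy--Schwarz estimate and the additivity of spectral measures over orthogonal cyclic subspaces. The main obstacle is the construction in $(\Rightarrow)$: one must carry the common absolutely continuous mass on a single shared vector inside the monster while keeping the residual singular mass genuinely orthogonal, and then verify through Theorem \ref{typeoverempty} that the assembled vector still realizes $p$. A subsidiary point requiring care is confirming that the intended reading of $\perp^a$ — independence of all realizations over $\emptyset$ — is exactly the notion dual to Remark \ref{independenceoverempty}.
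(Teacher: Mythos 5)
Your proof is correct and follows essentially the same route as the paper: both reduce $p\perp^a q$ to orthogonality of the essential cyclic subspaces of arbitrary realizations (via the characterization of $\ind^*_\emptyset$ in Remark \ref{independenceoverempty}) and then apply the Lebesgue decomposition of one spectral measure with respect to the other. You in fact supply the details the paper's one-paragraph sketch leaves implicit, namely the Cauchy--Schwarz bound on the cross measure $\langle E_\Omega v_e\mid w_e\rangle$ for the mutually singular direction, and the explicit construction $v_e=u+u'$ with $\m_u=\m_a$ realized inside $\tilde{H}_{w_e}$ by Theorem \ref{HvinHw}, which exhibits the dependent pair of realizations when the absolutely continuous part is nonzero.
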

\begin{proof}
$p\perp^a q$ if and only if $\tilde{H}_{v_{e}^\prime}\perp \tilde{H}_{w_{e}^\prime}$ for all $v_{e}^\prime\models p$ and $w_{e}^\prime\models q$. By Lesbesgue decomposition theorem $\m_{w_e}=\m_{v_e}^\parallel+\m_{v_e}^\perp$ where, $\m_{v_e}^\parallel <<\m_{v_e}$ and $\m_{v_e}^\perp\perp\m_{v_e}$. $\m_{v_e}^\parallel\neq 0$ if and only if there is a choice of $v^\prime\models p$ and $w^\prime\models q$ such that $\tilde{H}_{v_e^\prime}\cap \tilde{H}_{w_e^\prime}\neq \{0\}$ and therefore $\tilde{H}_{v_e^\prime}\not\perp \tilde{H}_{w_e^\prime}$. 
\end{proof}

\begin{coro}\label{orthogonalityoverA}
Let $G\subseteq \tilde{H}$ be small. Let $p,q\in
S_1(G)$, let $v\models p$ and $w\models q$. Then, $p\perp_G^a q$ if and only if $\m_{P^\perp_Gv_e}\perp \m_{P^\perp_Gw_e}$
\end{coro}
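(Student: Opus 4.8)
The plan is to reduce the relativized statement over a small set $G$ to the already-established case over $\emptyset$ (Theorem~\ref{orthogonalityoverempty}), by passing to orthogonal complements. The key conceptual point is that orthogonality over $G$, written $p\perp_G^a q$, should be defined exactly so that it is controlled by the parts of the realizations that live in $acl(G)^\perp$; concretely, $p\perp_G^a q$ holds if and only if $\tilde H_{(P^\perp_G v)_e}\perp\tilde H_{(P^\perp_G w)_e}$ for all realizations $v\models p$, $w\models q$, where $P^\perp_G$ denotes projection onto $acl(G)^\perp$ (I write $P_G^\perp v$ for brevity, as in Theorem~\ref{distanceoftypes} and Remark~\ref{independenceoverA}).

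First I would recall from Theorem~\ref{typeoverA} that a type $gatp(v/G)$ is determined by the pair $(P_G v,\ \m_{P_G^\perp v})$; in particular, the spectral behaviour of a type over $G$ that is \emph{not} already captured by $P_G v$ is entirely encoded in the measure $\m_{P_G^\perp v}$. This is the mechanism that lets one transfer the empty-set result: the ``free'' part of a realization over $G$ is $P_G^\perp v$, and by Remark~\ref{independenceoverA} spectral independence over $G$ is exactly orthogonality of the Hilbert spaces $\tilde H_{(P_G^\perp v)_e}$ and $\tilde H_{(P_G^\perp w)_e}$. So I would argue that $P_G^\perp v$ and $P_G^\perp w$, viewed as realizations of types over $\emptyset$, satisfy $p\perp^a q$ in the sense of Theorem~\ref{orthogonalityoverempty} relativized to the $G^\perp$-part, precisely when $\m_{(P_G^\perp v)_e}\perp\m_{(P_G^\perp w)_e}$.

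The proof then has two short steps. In the forward direction, I would assume $p\perp_G^a q$ and apply Theorem~\ref{orthogonalityoverempty} to the types of $P_G^\perp v$ and $P_G^\perp w$ over $\emptyset$: by the characterization there, orthogonality of $\tilde H_{(P_G^\perp v)_e}$ and $\tilde H_{(P_G^\perp w)_e}$ for all choices of realizations forces $\m_{(P_G^\perp v)_e}\perp\m_{(P_G^\perp w)_e}$, which is the desired mutual singularity $\m_{P_G^\perp v_e}\perp\m_{P_G^\perp w_e}$. For the converse, mutual singularity of these measures gives, again by Theorem~\ref{orthogonalityoverempty}, that no realizations of the $G^\perp$-parts can have overlapping cyclic subspaces, i.e.\ $\tilde H_{(P_G^\perp v')_e}\perp\tilde H_{(P_G^\perp w')_e}$ for all $v'\models p$, $w'\models q$, which is exactly $p\perp_G^a q$.

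The main obstacle I anticipate is bookkeeping rather than depth: one must be careful that the projection $P_G^\perp$ onto $acl(G)^\perp$ commutes appropriately with the spectral decomposition into discrete and essential parts, so that $(P_G^\perp v)_e$ is unambiguous, and that the ``for all realizations'' quantifier in the definition of $\perp_G^a$ matches the quantifier used in Theorem~\ref{orthogonalityoverempty}. Since $acl(G)$ is, by Theorem~\ref{algebraicclosure}, the closed subspace generated by $dcl(G)$ together with $H_d$, the projection $P_G^\perp$ lands inside $H_e$ away from the $G$-definable part, so the essential parts behave as required and the reduction goes through cleanly. Thus the statement follows directly from Theorem~\ref{orthogonalityoverempty} applied to the $acl(G)^\perp$-components.
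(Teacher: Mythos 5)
Your proposal is correct and follows essentially the same route as the paper, whose proof is simply the reduction ``Clear from Theorem \ref{orthogonalityoverempty}'': you pass to the $acl(G)^\perp$-components and apply the empty-set characterization, which is exactly the intended argument. Your extra bookkeeping (that types over $G$ are determined by $(P_Gv,\m_{P_G^\perp v})$ via Theorem \ref{typeoverA}, and that the discrete part absorbed into $acl(G)$ does not affect the essential components) just makes explicit what the paper leaves implicit.
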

\begin{proof}
Clear from Theorem \ref{orthogonalityoverempty}.
\end{proof}

\begin{coro}
Let $G\subseteq H$ be small. Let $p,q\in
S_1(G)$. Then, $p\perp^a q$ if and only if $p\perp q$.
\end{coro}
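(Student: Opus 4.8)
The plan is to prove the two implications separately, leaning on the spectral characterization of almost orthogonality in Corollary \ref{orthogonalityoverA} together with the preservation of spectral measures under nonforking extensions recorded in Theorem \ref{nonforkingextension}. The point of the statement is that, in this setting, almost orthogonality is already invariant under base extension, so it cannot be strictly weaker than orthogonality.

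The implication $p\perp q\Rightarrow p\perp^a q$ is formal. Orthogonality of $p$ and $q$ demands the almost orthogonality of every pair of nonforking extensions over every small $G'\supseteq G$; taking the trivial extension $G'=G$, so that the extensions are $p$ and $q$ themselves, yields $p\perp^a q$ immediately.

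For the converse $p\perp^a q\Rightarrow p\perp q$, I would fix realizations $v\models p$ and $w\models q$. By Corollary \ref{orthogonalityoverA} the hypothesis is equivalent to $\m_{P^\perp_Gv_e}\perp\m_{P^\perp_Gw_e}$. Now let $G'\supseteq G$ be an arbitrary small set and let $p',q'\in S_1(G')$ be the nonforking extensions of $p,q$ (unique by stationarity, Theorem \ref{explicitnonforkingrelation}), realized by $v'\models p'$ and $w'\models q'$. Condition (2) of Theorem \ref{nonforkingextension} says precisely that the spectral measure of the component orthogonal to the base is preserved, i.e. $\m_{P^\perp_{acl(G')}v'}=\m_{P^\perp_{acl(G)}v}$ and likewise for $w',w$. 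Hence the mutual singularity of $\m_{P^\perp_Gv_e}$ and $\m_{P^\perp_Gw_e}$ is inherited by $\m_{P^\perp_{G'}v'_e}$ and $\m_{P^\perp_{G'}w'_e}$, and a second application of Corollary \ref{orthogonalityoverA}, now over $G'$, gives $p'\perp^a q'$. Since $G'$ and the extensions were arbitrary, this is exactly $p\perp q$.

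The one point deserving care is that the measure $\m_{P^\perp_{acl(G)}v}$ governing the nonforking clause in Theorem \ref{nonforkingextension} is literally the same object as the measure $\m_{P^\perp_Gv_e}$ governing almost orthogonality in Corollary \ref{orthogonalityoverA}. I would settle this by unwinding Theorem \ref{algebraicclosure}: since $acl(G)$ is the closed subspace spanned by $dcl(G)=\tilde{H}_G$ and $acl(\emptyset)=H_d$, and since $v_d\in H_d\subseteq acl(G)$, one obtains $P^\perp_{acl(G)}v=P^\perp_{acl(G)}v_e=P^\perp_Gv_e$, the discrete summand $H_d$ contributing nothing to the projection of the essential vector $v_e$. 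Once this identification is in place, the argument reduces to transporting mutual singularity across an equality of measures, so the genuine content is the already-established invariance of the spectral measure under nonforking rather than any new computation.
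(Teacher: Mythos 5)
Your proof is correct and matches the paper's intent: the paper disposes of this corollary with the single line ``Clear from Corollary \ref{orthogonalityoverA}'', and your argument is precisely the expected unwinding of that remark --- almost orthogonality is governed by mutual singularity of $\m_{P^\perp_Gv_e}$ and $\m_{P^\perp_Gw_e}$, these measures are preserved under the (unique, by stationarity) nonforking extensions via condition (2) of Theorem \ref{nonforkingextension}, so almost orthogonality transfers to every base extension. Your extra care in identifying $P^\perp_{acl(G)}v$ with $P^\perp_Gv_e$ through Theorem \ref{algebraicclosure} is a detail the paper leaves implicit, but it is the same route, just made explicit.
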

\bpf
Clear from Corollary \ref{orthogonalityoverA}.
\epf

\begin{theo}\label{dominationoverempty}
Let $p,q\in S_1(\emptyset)$, let $v\models p$ and $w\models q$.
Then, $p\vartriangleright_\emptyset q$ if and only if $\m_{v_e}>>\m_{w_e}$.
\end{theo}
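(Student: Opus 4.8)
The plan is to translate the model-theoretic statement entirely into the language of spectral measures and then settle it by elementary measure theory. Recall from Remark \ref{independenceoverempty} that for vectors $v,u\in\tilde{H}$ one has $v\ind^*_\emptyset u$ precisely when $\tilde{H}_{v_e}\perp\tilde{H}_{u_e}$, and that (as in the proof of Theorem \ref{orthogonalityoverempty}) this orthogonality of spectral subspaces holds if and only if $\m_{v_e}\perp\m_{u_e}$, i.e.\ the two essential spectral measures are mutually singular. Unwinding the definition of domination, $p\vartriangleright_\emptyset q$ asserts that for every $u\in\tilde{H}$ the implication $v\ind^*_\emptyset u\Rightarrow w\ind^*_\emptyset u$ holds; after the translation this reads: for every $u$, $\m_{v_e}\perp\m_{u_e}$ implies $\m_{w_e}\perp\m_{u_e}$. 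The theorem thus reduces to showing that this implication holds for all $u$ if and only if $\m_{w_e}<<\m_{v_e}$.

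For the direction $(\Leftarrow)$ I would argue directly. Assume $\m_{v_e}>>\m_{w_e}$, i.e.\ $\m_{w_e}<<\m_{v_e}$, and let $u$ be any vector with $\m_{v_e}\perp\m_{u_e}$. Choose a Borel set $A$ witnessing this singularity, so that $\m_{v_e}(A)=0$ and $\m_{u_e}(\R\setminus A)=0$. Absolute continuity gives $\m_{w_e}(A)=0$, and hence the same set $A$ witnesses $\m_{w_e}\perp\m_{u_e}$. Thus every $u$ spectrally independent from $v$ over $\emptyset$ is also spectrally independent from $w$ over $\emptyset$, which is exactly $p\vartriangleright_\emptyset q$.

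For the direction $(\Rightarrow)$ I would argue by contraposition, using the Lebesgue decomposition to manufacture a counterexample. Suppose $\m_{w_e}$ is not absolutely continuous with respect to $\m_{v_e}$ and write $\m_{w_e}=\m_{w_e}^\parallel+\m_{w_e}^\perp$ with $\m_{w_e}^\parallel<<\m_{v_e}$ and $\m_{w_e}^\perp\perp\m_{v_e}$, so that the singular part $\m_{w_e}^\perp=\m_{w_e}\upharpoonright A$ (for the singular set $A$) is nonzero. Being a restriction of the essential measure $\m_{w_e}$, it is realized in the monster model as $\m_{u_e}$ for some $u$. Then $\m_{u_e}=\m_{w_e}^\perp\perp\m_{v_e}$ gives $v\ind^*_\emptyset u$, while $\m_{u_e}=\m_{w_e}^\perp<<\m_{w_e}$ is nonzero, so $\m_{w_e}$ and $\m_{u_e}$ cannot be mutually singular (a nonzero measure is never singular to one it is absolutely continuous with respect to), and hence $w\nind^*_\emptyset u$. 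This $u$ violates the domination condition, so $p\not\vartriangleright_\emptyset q$.

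I expect the only real subtlety to lie in the opening translation step: correctly interpreting $\vartriangleright_\emptyset$ through $\ind^*$ and invoking the identification of subspace orthogonality with mutual singularity of the essential measures. Once that dictionary is fixed, both directions are short measure-theoretic arguments, the $(\Rightarrow)$ direction resting only on the existence of a realization $u$ of the singular part $\m_{w_e}^\perp$, which is guaranteed by the richness of the monster model.
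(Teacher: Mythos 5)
Your translation step---the one you yourself flagged as the only subtlety---is where the proof breaks, and it breaks in both directions. Remark \ref{independenceoverempty} says that $v\ind^*_\emptyset u$ means $\tilde{H}_{v_e}\perp\tilde{H}_{u_e}$, i.e.\ orthogonality of the cyclic \emph{subspaces}; this is not equivalent to mutual singularity $\m_{v_e}\perp\m_{u_e}$ of the measures. Singularity does imply subspace orthogonality, but the converse fails badly: the monster model contains $\kappa$ pairwise orthogonal copies of $L^2(\R,\m)$ for each admissible measure $\m$, so two vectors lying in distinct copies are spectrally independent over $\emptyset$ while having \emph{equal} spectral measures. The identification with singularity in Theorem \ref{orthogonalityoverempty} is available there only because $p\perp^a q$ quantifies over \emph{all} pairs of realizations; for a fixed pair of vectors it is strictly weaker. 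Concretely: in your $(\Rightarrow)$ direction, the witness $u$ realizing $\m_{w_e}^\perp$ in a fresh orthogonal summand satisfies $\tilde{H}_{u_e}\perp\tilde{H}_{w_e}$, hence $w\ind^*_\emptyset u$ after all---non-singularity of $\m_{u_e}$ and $\m_{w_e}$ does not give $w\nind^*_\emptyset u$---so this $u$ does not contradict domination. In your $(\Leftarrow)$ direction you verify the implication only for those $u$ with $\m_{v_e}\perp\m_{u_e}$, whereas domination requires it for every $u$ with $\tilde{H}_{v_e}\perp\tilde{H}_{u_e}$; and for the \emph{fixed} given realizations the claim is actually false: if $w$ happens to lie in a copy orthogonal to $\tilde{H}_{v_e}$ with $\m_{w_e}=\m_{v_e}$ (so certainly $\m_{w_e}<<\m_{v_e}$), then $u:=w$ gives $v\ind^*_\emptyset u$ but $w\nind^*_\emptyset u$.

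The repair is to argue with subspaces and to use the existential choice of realizations implicit in $\vartriangleright_\emptyset$. For $(\Rightarrow)$, as in the paper's own proof: if $v$ dominates $w$, then $z:=P_{\tilde{H}_{v_e}^\perp}w_e$ must vanish, since otherwise $v\ind^*_\emptyset z$ while $\langle w\,|\,z\rangle=\|z\|^2\neq 0$ gives $w\nind^*_\emptyset z$; hence $\tilde{H}_{w_e}\subseteq\tilde{H}_{v_e}$, and Theorem \ref{HvinHw} converts the containment into $\m_{w_e}<<\m_{v_e}$. For $(\Leftarrow)$ (a direction the paper's proof in fact omits entirely), Theorem \ref{HvinHw} (Radon--Nikodym) lets you place a realization $w'\models q$ \emph{inside} $\tilde{H}_{v_e}$, with $\m_{w'_e}=\m_{w_e}$, which realizes $q$ by Theorem \ref{typeoverempty}; then any $G$ with $\tilde{H}_{v_e}\perp\tilde{H}_G$ automatically satisfies $\tilde{H}_{w'_e}\perp\tilde{H}_G$, so $v$ dominates $w'$ and $p\vartriangleright_\emptyset q$. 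Your measure-theoretic computations are correct as measure theory, but they establish the equivalence of $\m_{w_e}<<\m_{v_e}$ with a different, purely measure-quantified condition; the bridge between that condition and actual spectral independence is exactly the content that needed proof.
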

\begin{proof}
Suppose $p\vartriangleright_\emptyset q$. Suppose that $v$ and $w$ are such that if $v\ind^*_\emptyset G$ then $w\ind_\emptyset^*G$ for every $G\subseteq\tilde{H}$. Then for every $G$ if $\tilde{H}_{v_e}\perp \tilde{H}_G$ then $\tilde{H}_{w_e}\perp \tilde{H}_G$. This means $\tilde{H}_{w_e} \subseteq \tilde{H}_{v_e}$ and $\tilde{H}_{w_e}$ is unitarily equivalent to some Hilbert subspace of $\tilde{H}_{w_e}$ and by Theorem \ref{HvinHw} $\m_{w_e} <<\m_{v_e}$.
\end{proof}

\begin{coro}\label{dominationoverA}
Let $E$, $F$, and $G$ be small subsets of $\tilde{H}$ and $p\in S_1(F)$ and $q\in S_1(G)$ two stationary types. Then $p\vartriangleright_E q$ if and only if there exist $v$ $w\in\tilde{\H}$ such that $gatp(v/E)$ is a non-forking extension of $p$, $gatp(w/E)$ is a non-forking extension of $q$ and $\m_{P^\perp_{acl(F)}v}>>\m_{P^\perp_{acl(F)}w}$.
\end{coro}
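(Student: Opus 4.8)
The plan is to relativize Theorem~\ref{dominationoverempty} to the base $E$, exactly as Corollary~\ref{orthogonalityoverA} relativizes Theorem~\ref{orthogonalityoverempty}: one replaces the essential part of each vector by its projection off $acl(E)$ and checks that the argument goes through verbatim. First I would unwind the definition of domination over $E$. Here $p\vartriangleright_E q$ means there are realizations $v,w\in\tilde{H}$ of the respective $\ind^*$-independent (non-forking) extensions of $p$ and $q$ to $E$ such that, for every small $D\subseteq\tilde{H}$, one has $v\ind^*_E D\Rightarrow w\ind^*_E D$. Writing $v':=P^\perp_{acl(E)}v$ and $w':=P^\perp_{acl(E)}w$, note that both $v'$ and $w'$ are essential, since $acl(\emptyset)=H_d\subseteq acl(E)$ by Theorem~\ref{algebraicclosure}. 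By Remark~\ref{independenceoverA} the relation $u\ind^*_E D$ is equivalent to $\tilde{H}_{P^\perp_{acl(E)}(u)}\perp\tilde{H}_{P^\perp_{acl(E)}(D)}$, so the domination condition reads: for every $D$, if $\tilde{H}_{v'}\perp\tilde{H}_{P^\perp_{acl(E)}(D)}$ then $\tilde{H}_{w'}\perp\tilde{H}_{P^\perp_{acl(E)}(D)}$.

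Second, I would run the argument from the proof of Theorem~\ref{dominationoverempty} on $v'$ and $w'$. Letting $D$ range over a family of vectors generating the orthogonal complement of $\tilde{H}_{v'}$, the hypothesis $\tilde{H}_{v'}\perp\tilde{H}_{P^\perp_{acl(E)}(D)}$ is automatic, so the implication forces $\tilde{H}_{w'}\perp\tilde{H}_{v'}^{\perp}$, that is $\tilde{H}_{w'}\subseteq\tilde{H}_{v'}$. Theorem~\ref{HvinHw} then converts this inclusion of $Q$-cyclic subspaces verbatim into $\m_{w'}<<\m_{v'}$, i.e. $\m_{v'}>>\m_{w'}$; the converse direction is the monotonicity of orthogonality under subspace inclusion, and reverses each implication. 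Finally, since $gatp(v/E)$ and $gatp(w/E)$ are non-forking extensions, Theorem~\ref{nonforkingextension} gives $P^\perp_{acl(F)}v=P^\perp_{acl(E)}v=v'$ (and likewise $w'$ equals the projection of $w$ off the base of $q$), together with equality of the associated spectral measures; hence $\m_{v'}$ and $\m_{w'}$ are exactly the measures displayed in the statement over the common base $E$, and the equivalence is proved.

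The main obstacle is the passage from the universally quantified domination clause to the concrete containment $\tilde{H}_{w'}\subseteq\tilde{H}_{v'}$. A single instance of the implication says nothing; the content lies in choosing the test sets $D$ to sweep out the whole orthocomplement of $\tilde{H}_{v'}$ inside the $acl(E)$-complement, so that forced orthogonality of $w'$ to that orthocomplement pins $w'$ inside $\tilde{H}_{v'}$. Once this is set up, everything else is routine: Remark~\ref{independenceoverA} is pure translation, Theorem~\ref{HvinHw} does the measure-theoretic conversion, and the only remaining care is the bookkeeping, via Theorem~\ref{nonforkingextension}, that the non-forking hypotheses identify the projections off $acl(F)$, $acl(G)$ and $acl(E)$ so that the measures in the displayed inequality coincide with those produced by the reduction to the empty base.
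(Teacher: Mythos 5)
Your proposal is correct and follows essentially the same route as the paper, whose entire proof is ``Clear from previous theorem'': you relativize Theorem~\ref{dominationoverempty} to the base $E$ via Remark~\ref{independenceoverA}, run the same sweep of test sets over the orthocomplement of $\tilde{H}_{P^\perp_{acl(E)}v}$ to force $\tilde{H}_{P^\perp_{acl(E)}w}\subseteq\tilde{H}_{P^\perp_{acl(E)}v}$, and convert with Theorem~\ref{HvinHw}. Your final bookkeeping step, using Theorem~\ref{nonforkingextension} to identify the projections off $acl(F)$, $acl(G)$ and $acl(E)$ (which the paper's displayed $\m_{P^\perp_{acl(F)}w}$ glosses over), supplies exactly the detail the paper leaves implicit.
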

\begin{proof}
Clear from previous theorem.
\end{proof}

\addcontentsline{toc}{chapter}{Bibliography}

\end{document}